\theoremstyle{plain}
\newtheorem{theorem}{Theorem}[section]
\newtheorem{lemma}[theorem]{Lemma}
\newtheorem{proposition}[theorem]{Proposition}
\newtheorem{definition}[theorem]{Definition}
\newtheorem*{definition*}{Definition}
\theoremstyle{remark}
\newtheorem{remark}[theorem]{Remark}
\newtheorem{example}[theorem]{Example}
\newtheorem*{claim*}{Claim}
\newtheorem*{remark*}{Remark}
\newtheorem*{example*}{Example}
\newtheorem*{notation*}{Notation}
\numberwithin{equation}{section}
\def\R{{\mathbb R}}
\def\Q{{\mathbb Q}}
\newcommand{\eps}{\varepsilon}
\renewcommand{\phi}{\varphi}
\newcommand{\dd}{\mathrm{d}}
\newcommand{\ddd}{\; \mathrm{d}}
\newcommand{\ip}[1]{\langle {#1}\rangle}
\newcommand{\norm}[1]{\| {#1}\|}
\newcommand{\abs}[1]{\left\vert {#1}\right\vert}
\DeclareMathOperator{\Ric}{Ric}
\DeclareMathOperator{\Hess}{Hess}
\newcommand{\cH}{\mathcal{H}}
\newcommand{\cW}{\mathcal{W}}
\newcommand{\cS}{\mathcal{S}}
\newcommand{\cV}{\mathcal{V}}
\newcommand{\cX}{\mathcal{X}}
\newcommand{\cY}{\mathcal{Y}}
\newcommand{\cZ}{\mathcal{Z}}
\newcommand{\cA}{\mathcal{A}}
\newcommand{\CE}{\mathcal{CE}}
\newcommand{\cP}{\mathscr{P}}
\newcommand{\PX}{\cP(\cX)}
\newcommand{\PXs}{\cP_*(\cX)}
\renewcommand{\tilde}{\widetilde}
\DeclareMathOperator{\ent}{Ent}
\DeclareMathOperator{\HJ}{HJ}
\begin{document}

\title[Super Ricci flows for weighted graphs] {Super Ricci flows for weighted graphs}

\author{Matthias Erbar}
\author{Eva Kopfer}
\address{
University of Bonn\\
Institute for Applied Mathematics\\
Endenicher Allee 60\\
53115 Bonn\\
Germany}
\email{erbar@iam.uni-bonn.de}
\email{eva.kopfer@iam.uni-bonn.de}

\keywords{Ricci flow, graph, optimal transport, entropy}

\subjclass[2010]{Primary 35K05, 53C44; Secondary: 60J27, 52C99}

 \begin{abstract}
   We present a notion of super Ricci flow for time-dependent finite
   weighted graphs. A challenging feature is that these flows
   typically encounter singularities where the underlying graph
   structure changes. Our notion is robust enough to allow the flow to
   continue past these singularities. As a crucial tool for this purpose we study the heat flow on such singular time-dependent
   weighted graphs with changing graph structure. We then give several
   equivalent characterizations of super Ricci flows in terms of a
   discrete dynamic Bochner inequality, gradient and transport
   estimates for the heat flow, and dynamic convexity of the entropy
   along discrete optimal transport paths. The latter property can be
   used to show that our notion of super Ricci flow is consistent with
   classical super Ricci flows for manifolds (or metric measure
   spaces) in a discrete to continuum limit.
 \end{abstract}

\date\today

\maketitle
 
\tableofcontents
  
\section{Introduction}
\label{sec:intro}

The main purpose of the present paper is to identify a natural time
evolution of weighted graphs that can be considered as a discrete
analogue of (super-)Ricci flow. Its second purpose is a study of the
heat equation on time-dependent weighted graphs in a general
setting. The latter will serve as a tool to give robust
characterizations of discrete super Ricci flows, but might also be of
independent interest. Before we enter the discrete setting, let us
recall the classical notion of (super-)Ricci flow for manifolds and
recent developments that motivate our work.

\smallskip

A smooth manifold $M$ equipped with a one-parameter family
$(g_t)_{t\in I}$ of Riemannian metrics evolves as a \emph{Ricci flow}
if $\Ric_{g_t}=-\frac12\partial_tg_t$ for all $t\in I$. It is called a
\emph{super Ricci flow} if instead only
$\Ric_{g_t}\geq-\frac12\partial_tg_t$ is satisfied as an inequality
between quadratic forms, i.e.~super Ricci flows are `super solutions'
to the Ricci flow equation.

Since the seminal work of Hamilton \cite{H82,H95} and Perelman
\cite{P02,P03,P03b}, see also \cite{CZ06,KL08,MT07}, Ricci flow has
received a lot of attention and has become a powerful tool in many
applications. A challenging feature is that the flow typically
develops singularities in finite time. Currently, a lot of activity is
being devoted to extend the scope of Ricci flows beyond the setting of
smooth manifolds. A major challenge is to define and analyze flows
that pass through singularities where dimension and/or topological
type changes. Among the exciting recent contributions we mention the
work of Bamler, Kleiner and Lott \cite{KL17,BK17} constructing
canonical Ricci flows through singularities in dimension 3 as the
limit of flows with surgery and the work of Haslhofer and Naber
\cite{HN15} characterizing Ricci flows in terms of functional
inequalities on the path space, see also Cheng and Thalmeier
\cite{CT17}. Sturm \cite{sturm2015} introduced a synthetic definition
of super Ricci flow that applies to time-dependent metric measure
spaces using optimal transport. Here, the crucial observation is that
for a smooth family of Riemannian manifolds to be a super Ricci flow
is equivalent to \emph{dynamic convexity} of the Boltzmann entropy
along geodesics in the space of probability measures equipped with the
(time-dependent) $L^2$-Kantorovich distance (see
Sec.~\ref{sec:characterize} for a definition). The latter property is
meaningful when the manifold is replaced with a time-dependent metric
measure space and serves as a synthetic definition of super Ricci
flow.

In the case of a static Riemannian metric, the super Ricci flow
equation becomes $\Ric_g\geq 0$ and the notion of dynamic convexity
reduces to convexity of the entropy along geodesics in the
Kantorovich distance, the property used as a synthetic
definition of lower Ricci curvature bounds in the celebrated works of
Lott, Sturm and Villani \cite{S06,LV09}.

\smallskip

In view of the powerful applications of Ricci flow, it seems desirable
to develop a similar concept for discrete spaces, for instance as a
natural way of deforming a given space to a simpler
object. Unfortunately, the approach of Sturm \cite{sturm2015} does not
apply in this situation since the $L^2$ Kantorovich distance is
degenerate if the underlying space is discrete in the sense that it
does not admit geodesics. The main objective of the present article is
to develop a notion of super Ricci flow that applies to discrete
spaces, namely to time-dependent weighted graphs. In order to
circumvent the non-existence of geodesics, we will replace the
Kantorovich distance by a different distance $\cW$ on the space of
probability measures, constructed in \cite{Ma11}, that is well-adapted
to the discrete setting. In the case of a static weighted graph (or
Markov chain) this distance has been used successfully in \cite{EM12}
to define a notion of lower Ricci curvature bounds in the spirit of
the theory of Lott, Sturm and Villani via convexity of the entropy
along $\cW$-geodesics. Here, in the time-dependent case, super Ricci
flow will be defined via dynamic convexity of the entropy.

\smallskip

\begin{figure}[h]
  \centering
    \resizebox{4in}{!}{\input{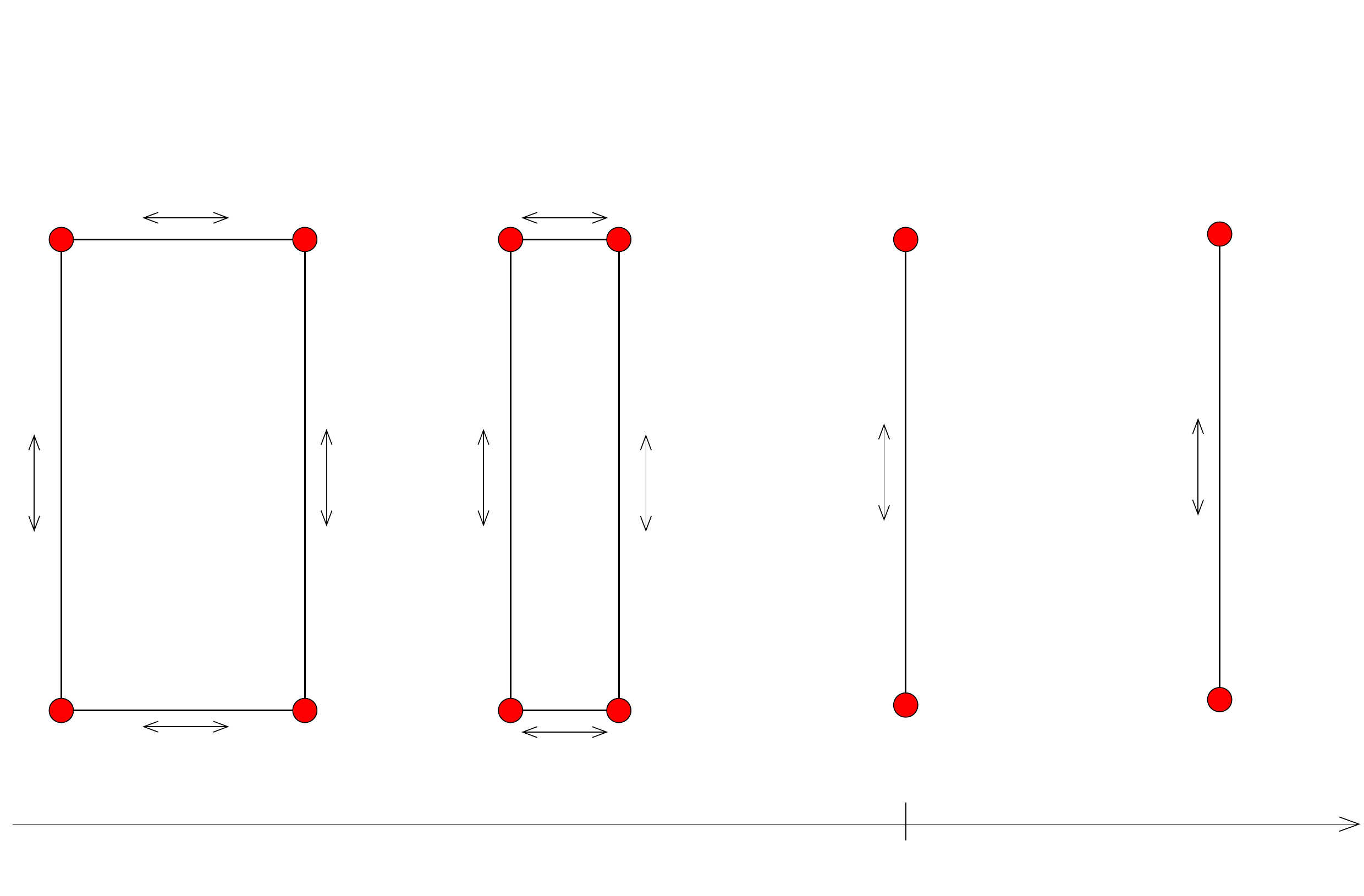_t}}
  \caption{Example of a discrete super Ricci flow}
  \label{fig:ex-intro}
\end{figure}

As in the continuous case, our discrete super Ricci flows will
typically produce singularities in finite time. A simple example is
depicted in Figure \ref{fig:ex-intro}. Here, several vertices collapse
and the weights of the connecting edges explode at the singularity
$t_1$ like $q_t=1/(t_1-t)$ for instance. This can be seen in analogy
to the continuous example of $S^2\times \mathbb{T}^2$ equipped with
the product $\big((1-2t)g_{S^2}\big)\otimes g_{\mathbb{T}^2}$ of the
scaled round and the flat metric and collapsing to $\mathbb{T}^2$ at
$t=\frac12$. An important feature of our approach is that it allows to
define discrete super Ricci flows \emph{through} such
singularities. In fact, we will show that discrete super Ricci flows
can be characterized equivalently via a discrete dynamic Bochner
inequality and via gradient and transport estimates for the heat
flow. These latter characterizations hold consistently across singular
times where the graph structure changes. To this end, we perform a
detailed analysis of the heat flow on general time-dependent weighted
graphs allowing for a variety of singular phenomena such as collapse
and spawning of vertices, or deletion and creation of edges. In
particular, we establish existence and uniqueness of the heat flow.

Finally, this provides a second motivation for our investigation of
the discrete setting as a 'sandbox' to develop methods to be used
eventually also in the technically more challenging setting of
continuous singular time-dependent spaces and (super) Ricci flows.
For instance, the analysis of the heat flow on time-dependent metric
measure space, initiated in \cite{sturm2016}, currently cannot deal
with the singularities at which the base space changes.

\subsection{Robust characterizations of super Ricci flows}
\label{sec:characterize}
Before we describe our main results in more detail, let us briefly
recall several robust characterizations of classical super Ricci
flows in terms of the heat flow and optimal transport, as they will serve as a guideline for the discrete setting.

Let $(g_t)_{t\in I}$ be a smooth family of Riemannian metrics on a (compact) manifold $M$. We denote by $\Delta_t$ the Laplace--Beltrami operator associated
with $g_t$. The heat flow is given by the propagator
$P_{t,s}\bar\psi$, defined for $s\leq t$ as the solution to
the heat equation $\partial_t\psi=\Delta_t\psi$ with initial condition
$\psi(\cdot,s)=\bar\psi$. By duality, we define the heat flow on
probability measures given by the propagator $\hat P_{t,s}\mu$
characterized via
$\int \psi \ \dd (\hat P_{t,s}\mu)=\int P_{t,s}\psi\ \dd\mu$.

The $L^2$-Kantorovich distance on the space probability measures $\cP(M)$ is given by
\begin{align*}
  W_{2,t}(\mu,\nu)^2 = \inf_\pi \int d_t(x,y)^2\ \dd\pi(x,y)\;,
\end{align*}
where the infimum is taken over all couplings of $\mu$ and $\nu$ and $d_t$
is the Riemannian distance. Finally, denote by
$\cH_t(\mu)=\int\rho\log\rho\dd\text{vol}_{g_t}$ for
$\mu=\rho\text{vol}_{g_t}$ the Boltzmann entropy. The connection between
these objects is that the (dual) heat flow evolves as the gradient
flow of the entropy w.r.t.~the Kantorovich distance.

Now, the super Ricci flow equation
\begin{align}\label{eq:srf-intro}
  \Ric_{g_t} \geq -\frac12 \partial_t g_t
\end{align}
is equivalent to any of the following properties:

\begin{itemize}
\item[(I)] \emph{dynamic Bochner inequality}: for all smooth functions $\psi$ on $M$ and $t\in I$:
  \begin{align*}
    \Gamma_{2,t}(\psi)\geq \frac12\partial_t\Gamma_t(\psi)\;,
  \end{align*}
  where $\Gamma_t(\psi):=|\nabla\psi|^2_{g_t}$ and
  $\Gamma_{2,t}(\psi):=\frac12
  \Delta_t\big(|\nabla\psi|^2_{g_t}\big)-\ip{\nabla\psi,\nabla\Delta_t\psi}_{g_t}$
  are the \emph{carr\'e du champs} operators associated to the
  Laplace--Beltrami operator $\Delta_t$,
\item[(II)] \emph{gradient estimate}: for all smooth functions $\psi$ and $s\leq t$:
  \begin{align*}
    \Gamma_t(P_{t,s}\psi) \leq P_{t,s}\Gamma_s(\psi)\;,
  \end{align*}
\item[(III)] \emph{transport estimates}: for all probability measures $\mu,\nu$ and $s\leq t$:
  \begin{align*}
    W_{2,s}(\hat P_{t,s}\mu,\hat P_{t,s}\nu) \leq W_{2,t}(\mu,\nu)\;,
  \end{align*}
\item[(IV)] \emph{dynamic convexity of entropy}: for all $t$ and all geodesics $(\mu^a)_{a\in[0,1]}$ in $(\cP(M),W_{2,t})$:
  \begin{align*}
    \partial^+_a\cH_t(\mu^{1-})-\partial_a^-\cH_t(\mu^{0+}) \geq -\frac12\partial_t^-W_{t-}(\mu^0,\mu^1)^2\;.
  \end{align*}
\end{itemize}

Here and in the sequel we denote by $\partial_a^\pm f(a\pm)$ the
upper/lower right/left derivative of $f$ at $a$, i.e.~for instance
\begin{align*}
  \partial_a^+ f(a+):=\limsup_{b\searrow a} \frac{f(b)-f(a)}{b-a}\;.
\end{align*}
The connection between (I) and \eqref{eq:srf-intro} stems immediately
from the Bochner identity
$\Gamma_{2,t}(\psi)=\Ric_{g_t}[\nabla\psi]+||\text{Hess}_{g_t}\psi||_{\text{HS}}^2$. (I)
and (II) are connected via a classical interpolation argument. The
characterization (III) in terms of non-expansion of the transport
distance under the heat flow was observed by McCann and Topping
\cite{MT10}. Characterization (IV) was established in \cite{sturm2015}
and should be thought of as a quantified formulation of convexity in
terms of the increase of the first derivative. In the static case it
reduces to plain convexity of the entropy along geodesics
characterizing non-negative Ricci curvature, see \cite{vRS05,CEMS01}.

As already mentioned, the advantage of the these characterizations is
their robustness, i.e.~ that they remain meaningful in a non-smooth
setting. For instance (I), (II) can be formulated for a family of
time-dependent Dirichlet forms. (IV) requires only the structure of a
time-dependent metric measure space $(X_t,d_t,m_t)_{t\in I}$. Sturm
and the second author \cite{sturm2016} proved that the equivalence of
(I) - (IV) holds in the setting of metric measure spaces, at least
under stringent regularity conditions (namely, $X_t\equiv X$ is
independent of $t$, a curvature-dimension bound RCD$(K,\infty)$ holds
uniformly in time, and Lipschitz controls on $d_t$ and $m_t$).

\smallskip

In this article, in the setting of time-dependent weighted graphs, we
will obtain similar equivalent characterization (I)-(IV), where the
carr\'e du champs operators and the transport distance are replaced
with suitable discrete counterparts, and where we allow for changing
graph structure.

\subsection{Main results}
\label{sec:main}
Let us now discuss the content of this article in more detail. 

We will consider a time-dependent family of \emph{Markov triples}
$(\cX_t,Q_t,\pi_t)_{t\in [0,T]}$.  Here for each $t$, $\cX_t$ is a
finite set, $\pi_t$ is a strictly positive probability measure on
$\cX_t$, and $Q_t:\cX_t\times\cX_t\to\R$ is a kernel giving the
transition rates of a continuous time Markov chain with the convention
that $Q_t(x,y)\geq0$ for $x\neq y$ and
$Q_t(x,x)=-\sum_{y\neq x}Q_t(x,y)$.  We will assume that $Q_t$ is
reversible, i.e.~the detailed balance condition holds:
\begin{align*}
  Q_t(x,y)\pi_t(x)=Q_t(y,x)\pi_t(y)\quad \forall x,y\in \cX_t\;.
\end{align*}
Equivalently, we can consider the family of weighted graphs
$(\cX_t,w_t,\pi_t)$, where $\cX_t$ is the set of vertices, $\pi_t$ is
the vertex weight, and the symmetric function
$w_t(x,y):=Q_t(x,y)\pi_t(x)$ is the edge-weight and the set of edges
is given by $\cV_t=\big\{\{x,y\}:w_t(x,y)>0\big\}$.  We will also
assume that $Q_t$ is irreducible, i.e.~the associated graph is
connected.

We allow the graph structure to change at a finite number of
times. More precisely, we will assume that there exists a partition
$0=t_0<t_1<\dots<t_n=T$ such that $(\cX_t,\cV_t)\equiv (\cX_i,\cV_i)$
for all $t\in I_i=(t_i,t_{i+1})$ and all $i=0,\dots,n-1$. During the
intervals $I_i$ we assume that $t\mapsto \pi_t$ is Lipschitz and that
$t\mapsto Q_t$ is locally Lipschitz with limits existing in
$[0,+\infty]$ as we approach the singular times, i.e.~$t\searrow t_i$
and $t\nearrow t_{i+1}$. If the limit of $Q_t(x,y)$ is $+\infty$, we
assume moreover, that the accumulated transition rate explodes, i.e.~
\begin{align}\label{eq:accumulated}
\int^{t_{i+1}}Q_t(x,y)\dd t=+\infty\quad \text{resp.~} \quad\int_{t_{i}}Q_t(x,y)\dd t=+\infty\;.
\end{align}
Moreover, the limiting weights are assumed to be compatible with the
weighted graph structure at singular times $t_i$. For a precise
statement of our assumptions see Section \ref{sec:topo}.

The interpretation is that the graph structure can change at singular
times $t_i$ due to different phenomena:
\begin{itemize}
\item edges can disappear (resp.~appear), corresponding to
  $w_t(x,y)\to0$ as $t\nearrow t_{i}$ (resp.~$t\searrow t_i$),
\item two vertices $x,y$ can \emph{collapse}, this happens if
  $w_t(x,y)\to\infty$ as $t\nearrow t_{i}$,
\item a vertex can \emph{spawn} new vertices (same as collapse but
  backwards in time).
\end{itemize}
Let us denote for $z\in\cX_{t_{i+1}}$ by $C_z\subset \cX_i$ the set of
vertices that collapse onto $z$ at $t_{i+1}$. Similarly, let
$S_z\subset \cX_{i+1}$ denote the set of vertices spawned by $z$.

\smallskip

Our first main result (see Thm.~\ref{thm:heat-eq} below) concerns the
existence and uniqueness of solutions to the (dual) heat equation in
this general setting. To this end, we introduce the discrete Laplacian
$\Delta_t$ and dual Laplacian $\hat\Delta_t$ associated with
$(\cX_t,Q_t,\pi_t)$ acting on functions $\psi,\sigma\in \R^{\cX_t}$
via
\begin{align*}
  \Delta_t\psi(x)&:=\sum_{y\in\cX_t}\big[\psi(y)-\psi(x)\big]Q_t(x,y)\;,\\
\hat\Delta_t\sigma(x)&:=\sum_{y\in\cX_t}\big[Q_t(y,x)\sigma(y)-Q_t(x,y)\sigma(x)\big]\;.
\end{align*}
For $0\leq s<t\leq T$, let us define space-time during the
interval $[s,t]$ by setting
\begin{align*}
  \cS_{s,t}:=\big\{(r,x) : r\in[s,t],~x\in\cX_r\big\}\;.
\end{align*}

\begin{theorem}\label{thm:heat-eq-intro}
  Given $s\in[0,T]$ and $\bar\psi\in\R^{\cX_s}$ there exist a unique
  $\psi:\cS_{s,T}\to\R$ such that:
  \begin{itemize}
  \item $\psi(s,\cdot)=\bar\psi$, the map $t\mapsto \psi(t,\cdot)$ is
    differentiable on each $I_i=(t_i,t_{i+1})$ and satisfies
   \begin{align}\label{eq:heat-intro}
\partial_t\psi=\Delta_t\psi \quad\text{on } I_i\times\cX_i\;,
    \end{align}

  \item for all $z\in \cX_{t_{i}}$, $x\in S_z$ and $y\in C_z$ we have
    \begin{align}\label{eq:heat-eq-cont-intro}
      \psi(t_i,z)=\lim_{t\downarrow t_i}\psi(t,x)
=\lim_{t\uparrow t_i}\psi(t,y)\;.
 \end{align}
\end{itemize}
Given $t\in[0,T]$ and $\bar\sigma\in\R^{\cX_t}$ there exist a unique
$\sigma:\cS_{0,t}\to[0,\infty)$ such that:
  \begin{itemize}
  \item $\sigma(t,\cdot)=\bar\sigma$, the map
$s\mapsto \sigma(s,\cdot)$ is differentiable on each $I_i=(t_i,t_{i+1})$ and satisfies 
\begin{align}\label{eq:dual-heat-intro}
\partial_s\sigma=-\hat\Delta_s\sigma\quad\text{on } I_i\times\cX_i\;,
\end{align}
  \item for all $z\in \cX_{t_i}$ we have 
    \begin{align}\label{eq:heat-eq-adj-cont-intro}
     \sigma(t_i,z)= \sum_{x\in S_z}\lim_{s\downarrow t_i}\sigma(s,x)
 =\sum_{y\in C_z}\lim_{s\uparrow t_i}\sigma(s,y) \;.
    \end{align}
  \end{itemize}  
\end{theorem}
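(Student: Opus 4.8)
I will reduce both assertions to the construction of a two-parameter family of \emph{propagators} $P_{t,s}\colon\R^{\cX_s}\to\R^{\cX_t}$ for $0\le s\le t\le T$ that are Markov ($P_{t,s}\ge 0$ entrywise and $P_{t,s}\one=\one$), obey the cocycle relation $P_{t,r}P_{r,s}=P_{t,s}$, and implement the heat equation; the dual flow will then be $\sigma(r,\cdot):=(P_{t,r})^{*}\bar\sigma$, the adjoint being taken with respect to the counting pairing $\ip{f,g}=\sum_x f(x)g(x)$, for which $\hat\Delta_s=\Delta_s^{*}$ by a one-line computation. On each open interval $I_i=(t_i,t_{i+1})$ the graph is fixed and $t\mapsto Q_t$ is locally Lipschitz, so by Picard--Lindel\"of on compact subintervals together with linearity the equation $\partial_t\psi=\Delta_t\psi$ has a unique global solution, defining $P^{(i)}_{t,s}$ for $t_i<s\le t<t_{i+1}$. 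Since $\Delta_t\one=0$ and $\Delta_t$ has non-negative off-diagonal entries, $P^{(i)}_{t,s}$ is Markov and hence an $\ell^{\infty}$-contraction, and $\Delta_t$ is self-adjoint and $\le 0$ on $L^2(\cX_t,\pi_t)$. In particular every solution of the heat equation is bounded, $\norm{\psi(t,\cdot)}_{\infty}\le\norm{\psi(s,\cdot)}_{\infty}$, and (by duality) every non-negative solution of the dual equation has constant total mass, so its density $\rho_t:=\sigma(t,\cdot)/\pi_t$ is bounded, using that $\pi_t$ stays in $[c,1]$ on each $\bar I_i$ by the Lipschitz and positivity hypotheses.

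\textbf{The crux: limits at singular times.} This is the heart of the matter and rests on two estimates as $t$ approaches a singular time $t^{*}\in\{t_i,t_{i+1}\}$. First, a \emph{block-equilibration lemma}: if $\psi$ is a bounded solution on $I_i$ and $\{x,y\}$ is an edge with $Q_t(x,y)\to+\infty$ as $t\to t^{*}$, then $\psi(t,x)-\psi(t,y)\to 0$. Indeed $\delta(t):=\psi(t,x)-\psi(t,y)$ solves $\dot\delta=-a(t)\delta+R(t)$ with $a(t)=Q_t(x,y)+Q_t(y,x)\to\infty$ and $\sup\abs R<\infty$ (the remaining terms are bounded, cross-cluster weights staying finite); variation of constants, the bound $\int^{t^{*}}a=\infty$ (which is exactly \eqref{eq:accumulated}) and the a priori bound $\abs\delta\le 2\norm\psi_{\infty}$ force $\delta(t)\to0$ — immediately at the upper endpoint, and at the lower endpoint boundedness is used to pin the constant of integration. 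Chaining along the exploding edges, which connect each collapse/spawn cluster by the compatibility assumptions of Section~\ref{sec:topo}, shows that a bounded $\psi$ has a one-sided limit that is constant on each cluster. Second, the \emph{macroscopic quantities} $m_z(t):=\sum_{x\in C_z}\pi_t(x)\psi(t,x)$ (and, for the dual flow, $\sum_{x\in C_z}\sigma(t,x)$) have bounded $t$-derivative near $t^{*}$, because all exploding terms cancel by symmetry of $w_t$ and what is left ($\dot\pi_t$-terms, bounded cross-cluster terms) is bounded; hence $m_z$ converges, and together with block-equilibration and the compatibility relation $\sum_{x\in C_z}\pi_t(x)\to\pi_{t^{*}}(z)>0$ this gives $\psi(t,x)\to\lim_t m_z(t)/\pi_{t^{*}}(z)$ for $x\in C_z$. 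The same applies to $\sigma$ via $\rho_t$, which satisfies a heat-type equation with a bounded zeroth-order term. Finally, applying these estimates on short intervals near $t_i$ shows that $P^{(i)}_{t,s}$, composed with the pull-back $\iota_i\colon\R^{\cX_{t_i}}\to\R^{\cX_i}$, $(\iota_i f)(x)=f(z)$ for $x\in S_z$, converges as $s\downarrow t_i$, with limit tending to $\iota_i$ as $t\downarrow t_i$; this is the forward propagator out of $t_i$.

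\textbf{Assembly and uniqueness.} With these limits the interior propagators extend to the singular times: $P^{(i)}_{t,s}$ converges as $t\uparrow t_{i+1}$ (existence of limits for bounded solutions), giving $P^{(i)}_{t_{i+1}^{-},s}$, and $P^{(i)}_{t,s}\iota_i$ converges as $s\downarrow t_i$, giving $P^{(i)}_{t,t_i^{+}}$. The global $P_{t,s}$ is the composition of these with the \emph{collapse maps} $\kappa_i\colon\R^{\cX_{i-1}}\to\R^{\cX_{t_i}}$, $\psi\mapsto\bigl(z\mapsto\pi_{t_i}(z)^{-1}\sum_{y\in C_z}\pi_{t_i^{-}}(y)\psi(y)\bigr)$, each of which is Markov and $\ell^{\infty}$-contractive; hence $\norm{P_{t,s}}_{\ell^{\infty}\to\ell^{\infty}}\le 1$ throughout. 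Setting $\psi(r,\cdot):=P_{r,s}\bar\psi$ yields a function solving \eqref{eq:heat-intro} on each $I_i$, and the two estimates of the previous paragraph show precisely that it satisfies the matching condition \eqref{eq:heat-eq-cont-intro} at each singular time (the collapse map realizes the limit from below, the pull-back the limit from above). For uniqueness I argue by induction over the intervals: if two solutions share $\psi(t_i,\cdot)$ then, by \eqref{eq:heat-eq-cont-intro}, they share the limit $\iota_i\psi(t_i,\cdot)$ as $t\downarrow t_i$; since $\psi_1(\tau,\cdot)-\psi_2(\tau,\cdot)=P^{(i)}_{\tau,t}\bigl(\psi_1(t,\cdot)-\psi_2(t,\cdot)\bigr)$ for $t_i<t<\tau$ and $\norm{P^{(i)}_{\tau,t}}_{\ell^{\infty}\to\ell^{\infty}}\le 1$, letting $t\downarrow t_i$ gives $\psi_1\equiv\psi_2$ on $I_i$, and then at $t_{i+1}$ by taking limits, closing the induction.

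\textbf{Dual statement.} Define $\sigma(r,\cdot):=(P_{t,r})^{*}\bar\sigma$. Since $\hat\Delta_s=\Delta_s^{*}$, it solves \eqref{eq:dual-heat-intro} on each $I_i$; it is $\ge 0$ because the adjoint of a positivity-preserving matrix is positivity-preserving; and \eqref{eq:heat-eq-adj-cont-intro} is the transpose of \eqref{eq:heat-eq-cont-intro}: the adjoint of $\iota_i$ is the fibre-sum $\sigma\mapsto\bigl(z\mapsto\sum_{x\in S_z}\sigma(x)\bigr)$, and the adjoint of $\kappa_i$ redistributes the mass at $z$ over $C_z$ proportionally to $\pi_{t_i^{-}}$, which in particular gives $\sum_{y\in C_z}\sigma(t_i^{-},y)=\sigma(t_i,z)$; these are the two equalities in \eqref{eq:heat-eq-adj-cont-intro}. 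Uniqueness of $\sigma$ follows from the invariance of the pairing: for every forward solution $\psi$ the map $r\mapsto\ip{\psi(r,\cdot),\sigma(r,\cdot)}$ is constant on $[s,t]$ — preserved on each $I_i$ because $\hat\Delta_r=\Delta_r^{*}$, and across each singular time by the matching conditions for $\psi$ and $\sigma$ — so $\ip{\bar\psi,\sigma_1(s,\cdot)-\sigma_2(s,\cdot)}=0$ for all $\bar\psi$. The main obstacle is the second paragraph: showing the macroscopic block quantities \emph{converge} (not merely stay bounded) while the microscopic differences are damped, and fitting this together cleanly with the compatibility conditions at singular times from Section~\ref{sec:topo}.
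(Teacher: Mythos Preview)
Your overall architecture is essentially that of the paper: build interior propagators by ODE theory, show they extend to the singular endpoints by an equilibration/macroscopic-limit argument, glue via collapse and spawn maps, and get the dual flow by adjointness. There are, however, two genuine gaps.

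\textbf{The block-equilibration ODE is not as written.} You claim that for an exploding edge $\{x,y\}$ the difference $\delta=\psi(\cdot,x)-\psi(\cdot,y)$ solves $\dot\delta=-a\delta+R$ with $\sup|R|<\infty$, justifying this by ``cross-cluster weights staying finite''. But when the collapse (or spawn) cluster contains a third vertex $w$ with $Q_t(x,w)\to\infty$, the remainder picks up the term $-\delta_{xw}Q_t(x,w)+\delta_{yw}Q_t(y,w)$, whose coefficients explode. The pairwise equations are therefore coupled through unbounded terms, and the variation-of-constants estimate as stated collapses. The paper handles this by working instead with the weighted mean $m_z(t)=\sum_{x\in c^{-1}(z)}\psi(t,x)\pi_t(x)$ and the variance $v(t)=\sum_{x\in c^{-1}(z)}|\psi(t,x)-m_z(t)|^2\pi_t(x)$: detailed balance makes the exploding contributions in $\dot m_z$ cancel exactly, while in $\dot v$ they assemble into a Dirichlet form that dominates $2Q_*^z(t)v(t)$, so Gronwall with $\int Q_*^z=\infty$ forces $v\to 0$. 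Your chaining idea can be repaired along these lines, but not with the scalar ODE you wrote.

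\textbf{Starting the forward flow at a spawn time.} You assert that $P^{(i)}_{t,s}\iota_i$ converges as $s\downarrow t_i$, citing ``these estimates on short intervals''. But your equilibration and macroscopic-limit lemmas concern a \emph{fixed} bounded solution approaching a singular time; here you have a \emph{family} $\psi^{(s)}:=P^{(i)}_{\cdot,s}\iota_i\bar\psi$ and must show it is Cauchy at a fixed later time. This can be done (combine $\|\psi^{(s_1)}(s_2,\cdot)-\iota_i\bar\psi\|_\infty=O(s_2-s_1)$, obtained from your two estimates, with the $\ell^\infty$-contractivity of $P^{(i)}_{t,s_2}$), but the proposal does not supply this step. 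The paper sidesteps the issue entirely: it first runs the \emph{adjoint} flow backward down to the spawn time $t_i$ (the density $\rho_s=\sigma_s/\pi_s$ obeys a heat-type equation with a bounded zeroth-order term, so the same mean/variance machinery applies), thereby constructing $\hat P_{t,t_i}$, and then \emph{defines} the forward solution out of $t_i$ by the duality $\ip{\psi(t,\cdot),\tilde\sigma}:=\ip{\bar\psi,\hat P_{t,t_i}\tilde\sigma}$, checking it solves the heat equation via $\partial_t\hat P_{t,t_i}\tilde\sigma=\hat P_{t,t_i}\hat\Delta_t\tilde\sigma$. The collapse case is handled symmetrically. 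This cross-use of duality for \emph{existence} (not merely uniqueness) is the main structural difference from your approach.
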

We define the heat propagator $P_{t,s}:\R^{\cX_s}\to\R^{\cX_t}$ and
dual heat propagator $\hat P_{t,s}:\R^{\cX_t}\to\R^{\cX_s}$ by setting
$ P_{t,s}\bar\psi=\psi(t,\cdot)$,
$\hat P_{t,s}\bar\sigma = \sigma(s,\cdot)$. Note that the dual heat
equation is interpreted as running backwards in time. This is natural
in view of the following duality relation. Interpreting the Euclidean
scalar product $\ip{\psi,\sigma}$ as the integral of $\psi$ against a
(signed) measure $\sigma$, we have that
\begin{align}\label{eq:dual-intro}
  \ip{P_{t,s}\psi,\sigma} = \ip{\psi,\hat P_{t,s}\sigma}\;.
\end{align}

Existence and uniqueness of solutions to \eqref{eq:heat-intro} and
\eqref{eq:dual-heat-intro} on the intervals $I_i$ is of course
guaranteed by standard theory of ODEs. The first non-trivial aspect of
the previous theorem is that the solution has a well-defined limit as
we approach singular times. Here the assumption \eqref{eq:accumulated}
will be crucial, which ensures that during a collapse the solution to
the heat equation already equilibrates before the singular time on the
group of collapsing vertices and thus leads to
\eqref{eq:heat-eq-cont-intro}, similarly for the dual heat equation
and spawning events. The second non-trivial aspect is that the
solution can be continued from singular times. Here, the dual equation
\eqref{eq:dual-heat-intro} starting from non-singular times will be
used to construct the solution to \eqref{eq:heat-intro} and vice-versa
exploiting the duality \eqref{eq:dual-intro}.

\smallskip

In order to state our second main result on the characterization of
discrete super Ricci flows, we need to introduce discrete analogues of
the optimal transport distance and the carr\'e du champs operators.

For each $t$ we consider the \emph{discrete transport distance}
$\cW_t$ between probability measures $\mu^0,\mu^1\in\cP(\cX_t)$ given
by
\begin{align*}
  \cW_t(\mu^0,\mu^1)^2 = \inf_{\mu,V}\left\{\int_0^1\frac12\sum_{x,y\in\cX_t}\frac{|V_{a}(x,y)|^2}{\Lambda(\mu^a)_t(x,y)} \dd a\right\}\;,
\end{align*}
where the infimum runs over all sufficiently regular curves
$\mu:[0,1]\to\cP(\cX_t)$ connecting $\mu^0$ and $\mu^1$, and
$V:[0,1]\to\R^{\cX_t\times\cX_t}$ satisfying the \emph{discrete
  continuity equation}
\begin{align*}
  \frac{\dd}{\dd a}\mu^a(x) +\frac12\sum_{y\in\cX_t}\big[V^a(x,y)-V^a(y,x)\big]=0\;, 
\end{align*}
and we write
$\Lambda(\mu)_t:=\Lambda\big(\mu(x)Q_t(x,y),\mu(y)Q_t(y,x)\big)$,
where $\Lambda(s,t):=\int_0^1s^\alpha t^{1-\alpha}\dd \alpha$ denotes
the \emph{logarithmic mean} of $s,t\geq0$. This distance associated to
a Markov triple has been introduced in \cite{Ma11} and can be thought
of as a discrete analogue of the Benamou--Brenier formula for the
$L^2$-Kantorovich distance.

Moreover, we introduce for $\psi\in\R^{\cX_t}$ and $\mu\in\cP(\cX_t)$
the \emph{integrated carr\'e du champs operator}
\begin{align*}
  \Gamma_t(\mu,\psi) = \ip{\nabla\psi,\nabla\psi\cdot\Lambda(\mu)_t}\;,
\end{align*}
where $\nabla\psi(x,y)=\psi(y)-\psi(x)$ denotes the discrete gradient,
and the multiplication with $\Lambda(\mu)_t$ is understood
componentwise in $\R^{\cX_t\times\cX_t}$. We also introduce an
integrated iterated carr\'e du champs operator
$\Gamma_{2,t}(\mu,\psi)$, see Section \ref{sec:BA}. These quantities
should be thought of as discrete analogues of
\begin{align*}
  \int\Gamma_t(\psi)\dd\mu\;, \int\Gamma_{2,t}(\psi)\dd\mu\;,
\end{align*}
where $\Gamma_t, \Gamma_{2,t}$ are the carr\'e du champs operators
associated to the Laplacian $\Delta_t$ in the continuous setting,
c.f.~Section \ref{sec:characterize}. Finally, let us denote by
\begin{align*}
  \cH_t(\mu)=\sum_{x\in\cX_t}\log\frac{\mu(x)}{\pi_t(x)}\mu(x)
\end{align*}
the relative entropy of $\mu\in\cP(\cX_t)$ w.r.t.~the reference
measure $\pi_t$.
Our second main result (see Theorem \ref{thm:srf-equiv}) is the following:

\begin{theorem}\label{thm:srf-equiv-intro}
  Let $(\cX_t,Q_t,\pi_t)_{t\in[0,T]}$ be a time-dependent Markov
  triple satisfying \eqref{eq:rate-int-infty2} and
  \eqref{eq:rate-int-infty3}. Then the following are equivalent:
\begin{enumerate}
\item [(I)] The \emph{dynamic Bochner inequality}
\begin{align}\label{eq:Bochner-intro}
\Gamma_{2,t}(\mu,\psi)\geq\frac12\partial_t\Gamma_t(\mu,\psi)
\end{align}
holds for a.e.~$t\in[0,T]$ and all $\mu\in\cP(\cX_t)$, $\psi\in\R^{\cX_t}$.
\item[(II)] The \emph{gradient estimate}
\begin{align}\label{eq:grad-est-intro}
\Gamma_t(\mu,P_{t,s}\psi)\leq\Gamma_s(\hat P_{t,s}\mu,\psi)
 \end{align}
  holds for all $0\leq s\leq t\leq T$ and all $\mu\in\cP(\cX_t)$, $\psi\in\R^{\cX_s}$.
\item[(III)] The \emph{transport estimate}
\begin{align}\label{eq:transport-intro}
\cW_s(\hat P_{t,s}\mu,\hat P_{t,s}\nu)\leq \cW_t(\mu,\nu)
 \end{align}
  holds for all $0\leq s \leq t\leq T$ and all $\mu,\nu\in\cP(\cX_t)$.
\item[(IV)] The \emph{entropy is dynamically convex}, i.e.~for
  a.e.~$t\in[0,T]$ and all $\cW_t$-geodesics $(\mu^a)_{a\in[0,1]}$
\begin{align}\label{eq:dyn-convex-intro}
\partial_a^+\cH_t(\mu^{1-})-\partial_a^-\cH_t(\mu^{0+})\geq-\frac12\partial_t^-\cW_{t-}^2(\mu^0,\mu^1)\;.
\end{align}
\end{enumerate}
A time-dependent Markov triple $(\cX_t,Q_t,\pi_t)_t$ is called a
\emph{discrete super Ricci flow} if any of these equivalent properties hold.
\end{theorem}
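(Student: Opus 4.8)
The plan is to prove the circle of equivalences by establishing (I)$\Leftrightarrow$(II)$\Leftrightarrow$(III) together with (I)$\Leftrightarrow$(IV), in each case first arguing on the non-singular intervals $I_i=(t_i,t_{i+1})$ by smooth methods and then gluing across the singular times $t_i$ using Theorem~\ref{thm:heat-eq-intro} and assumption~\eqref{eq:accumulated}. \emph{(I)$\Leftrightarrow$(II):} this is a Bakry--Émery interpolation in the time-dependent integrated setting. Fix $0\le s\le t\le T$, $\psi\in\R^{\cX_s}$, $\mu\in\cP(\cX_t)$, put $\psi_r:=P_{r,s}\psi$, $\mu_r:=\hat P_{t,r}\mu$ for $r\in[s,t]$ and $g(r):=\Gamma_r(\mu_r,\psi_r)$. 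By Theorem~\ref{thm:heat-eq-intro}, $r\mapsto\psi_r$ and $r\mapsto\mu_r$ are differentiable on each $I_i$ with one-sided limits at the $t_i$; one first checks that $g$ is continuous on all of $[s,t]$, which is where~\eqref{eq:accumulated} enters: at a collapse $\nabla\psi_r$ vanishes along each collapsing cluster and $\mu_r$ equilibrates on it before the singular time, so that although $\Lambda(\mu_r)_r$ contains the exploding rates $Q_r$, the quantity $\Gamma_r(\mu_r,\psi_r)=\ip{\nabla\psi_r,\nabla\psi_r\cdot\Lambda(\mu_r)_r}$ stays bounded with matching left and right limits; dually for spawning. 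On each $I_i$ we differentiate using $\partial_r\psi_r=\Delta_r\psi_r$, $\partial_r\mu_r=-\hat\Delta_r\mu_r$; by the very construction of $\Gamma_{2,r}$ in Section~\ref{sec:BA} the contributions from the motion of $\mu_r$ and of $\psi_r$ combine, so that
\begin{align*}
  g'(r) \;=\; \big(\partial_r\Gamma_r\big)(\mu_r,\psi_r) \;-\; 2\,\Gamma_{2,r}(\mu_r,\psi_r)\qquad\text{on }I_i,
\end{align*}
the first term being the derivative in the explicit time-dependence only. Hence \eqref{eq:Bochner-intro} is exactly $g'\le0$ on each $I_i$, and with continuity $g(t)\le g(s)$, i.e.\ \eqref{eq:grad-est-intro}; conversely, letting $s\uparrow t$ in \eqref{eq:grad-est-intro} and differentiating recovers \eqref{eq:Bochner-intro} for a.e.\ $t$.

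\emph{(II)$\Leftrightarrow$(III):} this is a Kuwada-type duality between the gradient estimate and contraction of $\cW$, using the Benamou--Brenier representation of $\cW_t$ and its dual formulation via subsolutions of a discrete Hamilton--Jacobi equation. Given \eqref{eq:grad-est-intro}: for a $\cW_t$-geodesic $(\rho^a)_{a\in[0,1]}$ from $\mu$ to $\nu$ and a Hamilton--Jacobi subsolution $\phi$ at level $s$, run the heat flow $P$ on $\phi$ backwards and use \eqref{eq:grad-est-intro} to bound the action at level $s$ of the transported curve $(\hat P_{t,s}\rho^a)_a$ by the action of $(\rho^a)_a$ at level $t$; taking the supremum over $\phi$ gives \eqref{eq:transport-intro}. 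Conversely, applying \eqref{eq:transport-intro} to measures obtained from a common $\mu$ by infinitesimal perturbations in the direction of $\nabla\psi\cdot\Lambda(\mu)_t$ and expanding to second order yields \eqref{eq:grad-est-intro}. Compatibility across the $t_i$ is again supplied by Theorem~\ref{thm:heat-eq-intro} and the continuity properties of $\cW$.

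\emph{(I)$\Leftrightarrow$(IV):} this is the dynamic analogue of the identity ``Hessian of the entropy equals $\Gamma_2$'' from the static theory \cite{EM12,Ma11}. For fixed $t$ and a $\cW_t$-geodesic $(\mu^a)_{a\in[0,1]}$, a direct computation from the continuity equation expresses $a\mapsto\partial_a^2\cH_t(\mu^a)$ through $\Gamma_{2,t}$, while $\partial_t^-\cW_{t-}^2(\mu^0,\mu^1)$ is obtained by differentiating the Benamou--Brenier action in $t$ and equals an integral of $\partial_t\Gamma_t$ along the geodesic. Feeding \eqref{eq:Bochner-intro} into these two identities and integrating in $a$ produces exactly \eqref{eq:dyn-convex-intro}; conversely, specializing \eqref{eq:dyn-convex-intro} to short geodesics and differentiating recovers \eqref{eq:Bochner-intro}. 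The technical care here is the low regularity of $\cW_t$-geodesics, which may touch the boundary of the simplex where $\cH_t$ and its derivatives blow up; this is handled by the a priori regularity of discrete geodesics together with an approximation argument.

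\textbf{Main obstacle.} The chief difficulty is not the interval-wise smooth analysis but the passage through the singular times: one must show that all four objects---$\Gamma$, $\Gamma_2$, $\cW$, $\cH$---together with the heat and dual heat propagators pass to the limit consistently as $t\to t_i^\pm$, so that the identities proved on the $I_i$ glue into global statements. This rests entirely on the description of the heat flow on singular time-dependent graphs in Theorem~\ref{thm:heat-eq-intro} and on~\eqref{eq:accumulated}, which forces equilibration along collapsing clusters before a singular time (and the dual behaviour at spawning events). A secondary, already static, obstacle is the lack of smoothness of $\cW_t$-geodesics near the boundary of the simplex in the entropy computation for (I)$\Leftrightarrow$(IV).
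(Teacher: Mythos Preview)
Your outline for (I)$\Leftrightarrow$(II), (II)$\Leftrightarrow$(III), and (IV)$\Rightarrow$(I) matches the paper's argument closely: the interpolation $g(r)=\Gamma_r(\hat P_{t,r}\mu,P_{r,s}\psi)$ with $g'=-2\Gamma_{2,r}+\partial_r\Gamma_r$, the Hamilton--Jacobi duality for the transport estimate (the paper shows directly that $P_{t,s}$ maps $\HJ_{\cX_s}^1$ into $\HJ_{\cX_t}^1$ under (II), without invoking a geodesic), and the Hessian identity $\Hess\cH_t=\Gamma_{2,t}$ for (IV)$\Rightarrow$(I) are all exactly what is done. One correction: the continuity of $g$ across singular times does \emph{not} follow from the accumulated-rate condition~\eqref{eq:accumulated} alone. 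The paper needs the sharper growth hypotheses~\eqref{eq:rate-int-infty2} and~\eqref{eq:rate-int-infty3} from the theorem statement precisely to ensure that the product $|\nabla\psi_r|^2\Lambda_r(\mu_r)$ on collapsing (resp.\ spawning) edges tends to zero, since $|\nabla\psi_r|^2$ decays only at the rate dictated by Proposition~\ref{prop:asymptotics} while $Q_r$ blows up.

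The substantive divergence is in the forward direction to (IV). You propose to obtain (I)$\Rightarrow$(IV) by integrating the Bochner inequality along a $\cW_t$-geodesic, using $\partial_a^2\cH_t(\mu^a)=\Gamma_{2,t}(\mu^a,\psi^a)$ and a formula relating $\partial_t^-\cW_{t-}^2$ to $\int\partial_t\Gamma_t$. The paper does \emph{not} take this route. Instead it proves (II)$\Rightarrow$(IV) via an intermediate dynamic EVI$^-$ property: it introduces an auxiliary two-parameter distance $\cW_{s,t}$ defined through time-shifted Hamilton--Jacobi subsolutions, establishes an \emph{action estimate} (Proposition~\ref{prop:action-est}) which yields an integrated EVI inequality~\eqref{eq:EVI-long}, and then combines the EVI with the contraction (III) to deduce dynamic convexity. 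Your direct approach is conceptually sound---the relevant inequality $\partial_t^-\cW_{t-}^2\geq -\int_0^1\partial_t\Gamma_t(\mu^a,\psi^a)\,\dd a$ does point the right way, since the optimal curve at time $t$ is merely a competitor at nearby times---but turning it into a proof requires knowing that the Hessian formula and the potential description hold along the \emph{entire} geodesic, including where it meets the boundary of the simplex. You flag this, but ``a priori regularity together with an approximation argument'' is not a proof; in the static case this is exactly the nontrivial content of \cite[Thm.~4.4]{EM12}, and no dynamic analogue is supplied here. The EVI route sidesteps this issue because the action estimate works with arbitrary $C^1$ competitor curves and Hamilton--Jacobi subsolutions rather than with the geodesic's own velocity potential.
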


The properties (I)--(IV) are natural discrete analogues of the
corresponding properties characterizing classical smooth super Ricci
flows discussed in Section \ref{sec:characterize}. An essential aspect
here is that the gradient estimate (II) and the transport estimate
(III) are requested to hold for \emph{all} $s\leq t$, i.e.~also
across singular times. This is what allows us to give a consistent
definition of discrete super Ricci flow through singularities where
the graph structure changes.

\smallskip Let us briefly comment on some ideas for the proof of
Theorem \ref{thm:srf-equiv-intro}. We will show that (I) implies (II)
via a classical interpolation argument, considering the quantity
$\Phi(r)=\Gamma_r(\hat P_{t,r}\mu,P_{r,s}\psi)$ and differentiating in
$r$. The crucial observation is that $\Phi$ is continuous at the
singular times. To show that (II) implies (III) we employ a dual
formulation of the discrete transport distance $\cW_t$ in the spirit
of the Kantorovich duality involving subsolutions to the
Hamilton-Jacobi equation, that was recently established independently
in \cite{EMW} and \cite{GLM17}, see Section \ref{sec:dual}. The reverse
implication will be shown by taking $\nu$ close to $\mu$ on the
geodesic in direction $\nabla\psi$ and employing again the duality.
The implication from (II)/(III) to (IV) constitutes the technical
core of the argument. Inspired by arguments in \cite{sturm2016}, we
will show that (II) implies that the heat flow can be characterized as
the gradient flow of the entropy w.r.t.~$\cW_t$ in the sense of a
dynamic evolution variational inequality. This together with (III)
will imply the dynamic convexity (IV). Finally, (IV) will imply (I)
after noting that $\Gamma_{2,t}(\mu,\cdot)$ coincides with the
Hessian of the entropy $\cH_t$ at $\mu$.

\smallskip

Our third main result concerns the consistency of discrete super Ricci
flow with classical super Ricci flows and more generally with the
synthetic definition of Sturm \cite{sturm2015} for time-dependent
metric measure spaces. In Theorem \ref{thm:stable} we identify a
suitable notion of convergence of a sequence
$(\cX^{(n)},Q^{(n)}_t,\pi^{(n)}_t)_{t\in I}$ of time-dependent Markov
triples to a time-dependent Riemannian manifold or metric measure
space $(X,d_t,m_t)_{t\in I}$ such that a limit of discrete super Ricci
flows is again a super Ricci flow. To this end, we assume that maps
$i_n:\cP(\cX^{(n)})\to\cP(X)$ exist and postulate a suitable sort of
$\Gamma$-convergence of the entropies and the transport distances
along these maps. Under some uniform regularity assumption on the
time-dependence this will suffice for the stability of super Ricci
flows. In order to pass to the limit we will employ an integrated
formulation of the dynamic convexity property (IV) already used in
\cite{sturm2015}.

We think of the Markov triples as finer and finer discrete
approximations of the spaces $(X,d_t,m_t)$. This approximation should
come with a natural way of extending (and regularizing) measures from
the discrete approximation to the full space, given by the maps
$i_n$. The purpose of our result is to identify sufficient conditions
for the stability of super Ricci flows. In practice, the
$\Gamma$-convergence of entropies should be a soft
requirement. Convergence of the transport distances seems harder to
establish. Some results are available in the static case by Gigli and
Mass \cite{GM12} and Trillos \cite{Tri} for a lattice resp.~point
cloud approximations of the torus, or by Gladbach, Maas and the second
author for finite element approximation of Euclidean domains
\cite{GKM}. Convergence results for discrete transport distances on
curved spaces remain an interesting open problem at the moment.

\subsection{Connection to the literature}
\label{sec:lit}
Let us briefly mention other related results in the literature. As
already discussed, our approach is close in spirit to the synthetic
approach to super Ricci flow in \cite{sturm2015,sturm2016} and
consistent with the discrete notion of Ricci curvature bounds
considered in \cite{EM12}. Many other approaches to Ricci curvature
for (weighted) graphs have been proposed, let us mention in particular
the combinatorial notion of Forman \cite{For03}, the coarse Ricci
curvature by Ollivier \cite{Oll09} based on the $L^1$ Kantorovich
distance, and approaches based on (modifications) of a discrete
Bakry--\'Emery $\Gamma_2$ criterion, see
e.g.~\cite{LLY11,BHLLMY15,DKZ}. A notion of discrete Ricci flow based
on Forman's combinatorial Ricci curvature has been studied recently in
\cite{WSJ17} and applied to the analysis of complex networks. Also the
latter two curvature notions could be used to define a notion of
(super) Ricci flow for weighted graphs. For Ollivier's curvature this
was proposed e.g.~in \cite{Tan15} motivated by the analysis of complex
cancer networks. We are not aware of any works studying these notions
in more detail. A lot of activity has been devoted to discrete notions
of Ricci flow in the more specific setting of triangulated
surfaces. See for instance the work of Chow and Luo \cite{CL03}
related to circle packings. These notions have broad applications in
graphics and medical imaging, for instance, see
e.g.~\cite{Zha15,Zen13}. A generalization of discrete Ricci flow to
higher dimensional simplicial structures termed simplicial Ricci flow
has recently been proposed in \cite{YauAL14a}. Also other curvature
flows such as the Yamabe flow have been considered in the discrete
setting \cite{Glick05}.

An advantage of our approach is that our notion of super Ricci flow
can naturally be defined through singularities. To our knowledge, this
has not been considered for the other notions discussed above. Another
advantage is that our discrete super Ricci flow yields strong control
on the heat flow on the evolving graph.

\subsection*{Organization}
The article is organized as follows. In Section \ref{sec:prelim} we
recall the notion of discrete transport distance on weighted graphs,
in particular its dual formulation that will be crucial in proving the
equivalence of the different characterizations of super Ricci
flows. We also recall the notion of entropic Ricci bounds for weighted
graphs. In Section \ref{sec:heat} we describe in detail the setting of
singular time-dependent Markov triples with changing graph structure that
we consider. Then we prove existence and uniqueness of solutions to
the heat equation and dual heat equation in this general
setting. In Section \ref{sec:RFequiv} we prove equivalence of the
different characterizations of super Ricci flows. Several examples of
super Ricci flows are presented in Section \ref{sec:ex}. Finally, in
Section \ref{sec:stability}, we discuss the consistency of our
discrete notion of super Ricci flow with the notion of super Ricci
flow for smooth manifolds or continuous metric measure spaces.

\subsection*{Acknowledgments}
\label{Acknowledgements}
M.E.~and E.K.~gratefully acknowledge support by the German Research
Foundation through the Hausdorff Center for Mathematics and the
Collaborative Research Center 1060 \emph{Mathematics of Emergent
  Effects}.

\section{Preliminaries on discrete optimal transport}
\label{sec:prelim}

Here we briefly recall the definitions of the discrete transport
distance $\cW$ and the associated Riemannian structure introduced
independently in \cite{Ma11,Mie11a}, and the entropic Ricci curvature
bounds for finite Markov chains introduced and studied in
\cite{EM12}. Finally we derive a dual formulation of the transport
distance.

\subsection{Discrete transport distance and Ricci bounds}
\label{sec:def}

Let $\cX$ be a finite set and let $Q:\cX\times\cX\to\R_+$ be a
collection of transition rates. The operator $\Delta$ acting on
functions $\psi : \cX \to \R$ via
\begin{align*}
\Delta \psi(x) = \sum_{y \in \cX} Q(x,y) \big(\psi(y) - \psi(x)\big)  
\end{align*}
is the generator of a continuous time Markov chain on $\cX$. We make
the convention that $Q(x,x)=-\sum_{y\neq x}Q(x,y)$ for all
$x\in\cX$. We assume that $Q$ is irreducible, i.e.~for all $x,y\in\cX$
there exists a path $x_0=x,x_1,\dots,x_n=y$ such that
$Q(x_i,x_{i+1})>0$. We assume moreover, that $Q$ is reversible. More precisely, we assume
that there exists a strictly positive probability measure $\pi$ on
$\cX$ such that the detailed-balance condition holds:
\begin{align}\label{eq:reversibility}
  Q(x,y)\pi(x) = Q(y,x)\pi(y)\quad \forall x,y\in\cX\;.
\end{align}
A triple $(\cX,Q,\pi)$ as above will be called a \emph{Markov triple}. 

We consider a distance $\cW$ on the set $\PX$ of probability measures
on $\cX$ defined as follows: for $\mu_0,\mu_1 \in \PX$ set
\begin{align}\label{eq:def-W}
  \cW(\mu_0,\mu_1)^2~&=~\inf_{\mu,
    V}\left\{\int_0^1\cA(\mu_t,V_t)\dd
    t~:~(\mu,V)\in\CE_1(\mu_0,\mu_1)\right\}\;,
\end{align}

where $\CE_T(\mu_0,\mu_1)$ denotes the collection of pairs
$(\mu,V)$ satisfying the continuity equation, more precisely, the following conditions:
\begin{align} \label{eq:conditions-2} 
 \left\{ \begin{array}{ll}
{(i)} & \mu : [0,T] \to \R^\cX  \text{ is continuous} \;;\\
{(ii)} &  \mu(0) = \mu_0\;, \qquad \mu(T) = \mu_1\;; \\
{(iii)} & \mu(t)\in\PX \text{ for all }t\in[0,T]\;;\\
{(iv)} & V  : [0,T] \to \R^{\cX\times\cX} \text{ is locally integrable}\;;\\
{(v)} & \text{For all $x \in \cX$ we have in the sense of distributions}\\
       &\displaystyle{\dot \mu_t(x) 
         + \frac12 \sum_{y \in \cX}
         \big(V_t(x,y) - V_t(y,x)\big)  = 0}\;.\
\end{array} \right.
\end{align}
The action $\cA$ is defined via
\begin{align*}
  \cA(\mu,V)~=~\frac12\sum\limits_{x,y}\frac{V(x,y)^2}{\Lambda(\mu)(x,y)}\;, \quad \Lambda(\mu)(x,y)=\hat\mu(x,y):=\Lambda\big(\mu(x)Q(x,y),\mu(y)Q(y,x)\big)\;,
\end{align*}
where $\Lambda$ denotes the logarithmic mean given by
\begin{align*}
  \theta(s,t) = \int_0^1s^\alpha t^{1-\alpha}\dd\alpha\;.
\end{align*}
More precisely, we set 
\begin{align*}
  \cA(\mu,V)~=~\frac12\sum\limits_{x,y}\alpha\Big(V(x,y),\mu(x)Q(x,y), \mu(y)Q(y,x)\Big)\;,
\end{align*}
where the convex and lower semicontinuous function
$\alpha : \R \times\R_+^2 \to \R \cup \{ + \infty\}$ is defined by
\begin{align*}
  \alpha(x,s,t) = \begin{cases}
\frac{x^2}{\Lambda(s,t)}\;,
                             & s,t \neq 0\;,\\                     
0\;,
                             &  \Lambda(s,t) = 0\text{ and } x = 0 \;,\\                             
                             + \infty\;,
                             & \text{else}\;.\end{cases}
\end{align*}
It is readily checked, that this formulation of $\cW$ is equivalent to
the one given in \cite[Lem.~2.9]{EM12}, in particular, in the the
definition of $\cW$ one can restrict the infimum to curves $\mu$ and
$V$ that are smooth.

It has been shown in \cite{Ma11} that $\cW$
defines a distance on $\PX$. It turns out that it is induced by a
Riemannian structure on the interior $\PXs$, consisting of all
strictly positive probability measures. The distance $\cW$ can be seen
as a discrete analogue of the Benamou--Brenier formulation \cite{BB00}
of the continuous $L^2$-transportation cost. The role of the
logarithmic mean is due to provide a discrete chain rule for the
logarithm, namely $\hat\rho\nabla\log\rho=\nabla\rho$, where we write
$\nabla\psi(x,y)=\psi(y)-\psi(x)$ and
$\hat\rho(x,y)=\Lambda\big(\rho(x),\rho(y)\big)$. The distance $\cW$
is tailor-made in this way such that the discrete heat equation
$\partial_t\rho=\Delta\rho$ is the gradient flow of the relative
entropy
\begin{align*}
 \cH(\mu) = \sum_{x \in \cX}  \log\frac{\mu(x)}{\pi(x)}\mu(x)
\end{align*}
w.r.t.~the Riemannian structure induced by $\cW$
\cite{Ma11,Mie11a}, making $\cW$ a natural replacement of the
$L^2$-Kantorovich distance  in the discrete setting. 

Every pair of measures $\mu_0, \mu_1 \in \PX$ can be joined by a constant speed
$\cW$-geodesic $(\mu_s)_{s\in[0,1]}$. Here constant speed geodesic
means that $\cW(\mu_s,\mu_t)=|s-t|\cW(\mu_0,\mu_1)$ for all
$s,t\in[0,1]$. The geodesic is a minimizer in \eqref{eq:def-W}.

In analogy with the approach of Lott--Sturm--Villani, the following
definition of a Ricci curvature lower bound has been given in \cite{EM12}.

\begin{definition}\label{def:intro-Ricci}
  $(\cX,Q,\pi)$ has \emph{Ricci curvature bounded from below by
    $\kappa \in \R$}, if for any constant speed geodesic $\{\mu_t\}_{t
    \in [0,1]}$ in $(\PX, \cW)$, we have
  \begin{align*}
    \cH(\mu_t) \leq (1-t) \cH(\mu_0) + t \cH(\mu_1) -
    \frac{\kappa}{2} t(1-t) \cW(\mu_0, \mu_1)^2\;.
  \end{align*}
  In this case, we write $\Ric(\cX,Q,\pi) \geq \kappa$.
\end{definition}

\subsection{Riemannian structure and Bochner-type inequality}
\label{sec:BA}

Entropic curvature bounds can be expressed equivalently via an
inequality resembling Bochner's inequality in Riemannian geometry. To this end, let us describe the Riemannian structure induced by
$\cW$.

To alleviate notation, let us denote for $\Phi,\Psi\in\R^{\cX\times\cX}$ their Euclidean inner product by
\begin{align*}
  \ip{\Phi,\Psi}=\frac12\sum_{x,y\in\cX}\Phi(x,y)\Psi(x,y)\;.
\end{align*}
At each $\mu\in\cP_*(\cX)$ the tangent space to $\cP_*(\cX)$ is given
by $\mathcal{T}=\{s\in\R^\cX:\sum_xs(x)=0\}$. Given $\psi\in\R^\cX$ we
denote by $\nabla\psi\in\R^{\cX\times\cX}$ the discrete gradient of
$\psi$, i.e.~the quantity $\nabla\psi(x,y)=\psi(y)-\psi(x)$. Let
$\mathcal{G}=\{\nabla\psi:\psi\in\R^\cX\}$ denote the set
of all discrete gradient fields and note that $\mathcal{G}$ is in bijection to the set $\mathcal{G'}=\{\psi\in\R^{\cX}:\psi(x_0)=0\}$. In
\cite[Sec.~3]{Ma11} it has been shown that for each $\mu\in\cP_*(\cX)$, the map
\begin{align*}
  K_\mu: \psi \mapsto \sum_y\nabla\psi(y,x)\Lambda(\mu)(x,y)\,,
\end{align*}
defines a linear bijection between $\mathcal{G}$ and the tangent space
$\mathcal{T}$. This identification can be used to define a Riemannian metric tensor on
$\cP_*(\cX)$ by introducing the scalar
product $\langle\cdot,\cdot\rangle_\mu$ on $\mathcal{G}$ given by
\begin{align*}
  \langle\nabla\psi,\nabla\phi\rangle_\mu=\ip{\nabla\psi,\nabla\phi\cdot\Lambda(\mu)}= \frac12\sum_{x,y}\nabla\psi(x,y)\nabla\phi(x,y)\Lambda(\mu)(x,y)\;.
\end{align*}
Then $\cW$ is the Riemannian distance associated to this Riemannian
structure. Note that if we introduce the divergence of $\Phi\in\R^{\cX\times\cX}$ via 
\begin{align*}
  \nabla\cdot\Phi(x):=\frac12\sum_{y\in\cX}\Phi(x,y)-\Phi(y,x)\;,
\end{align*}
we can write for short $K_\mu\psi=\nabla\cdot\big(\Lambda(\mu)\cdot\psi\big)$.

Let us introduce the following \emph{integrated carr\'e du champs operators}. For $\mu\in\PX$ (resp.~$\mu\in\cP_*(\cX)$) and $\psi\in\R^\cX$ set
\begin{align}\label{eq:Gamma}
  \Gamma(\mu,\psi)&:=\ip{\nabla\psi,\nabla\psi\cdot\Lambda(\mu)} = \norm{\nabla\psi}_\mu^2\;,\\\label{eq:Gamma2}
  \Gamma_2(\mu,\psi)&:=\frac12\ip{\nabla\psi,\nabla\psi\cdot\hat\Delta \Lambda(\mu)} -\ip{\nabla\psi,\nabla\Delta\psi\cdot\Lambda(\mu)}\;,
\end{align}
where we have used the notation
\begin{align*}
 \hat \Delta \Lambda(\mu)(x,y) ~&:=~  
 \Big[\partial_1\Lambda\big(\rho(x),\rho(y)\big) \Delta\rho(x) 
  + \partial_2\Lambda\big(\rho(x),\rho(y)\big) \Delta\rho(y)\Big]Q(x,y)\pi(x)\;,
\end{align*}
with $\rho(x)=\mu(x)/\pi(x)$ and where the multiplication with
$\Lambda(\mu)$ and $\hat\Delta\Lambda(\mu)$ is defined component wise.

Entropic Ricci bounds, i.e.~convexity of the entropy along
$\cW$-geodesics, are determined by bounds on the Hessian of the
entropy $\cH$ in the Riemannian structure defined above. An explicit
expression of the Hessian at $\mu\in\cP_*(\cX)$ is given by
\begin{align*}
  \Hess\cH(\mu)[\nabla\psi] = \Gamma_2(\mu,\psi)\;.
\end{align*}
We then have the following equivalent characterization of entropic
Ricci bounds.

\begin{proposition}[{\cite[Thm.~4.4]{EM12}}]\label{prop:Ric-equiv}
  A Markov triple $(\cX,Q,\pi)$ satisfies $\Ric(\cX,Q,\pi)\geq\kappa$
  if and only if for every $\mu\in\cP_*(\cX)$ and every
  $\psi\in\R^\cX$ we have
  \begin{align*}
    \Gamma_2(\mu,\psi)~\geq~\kappa \Gamma(\mu,\psi)\;.
  \end{align*}
\end{proposition}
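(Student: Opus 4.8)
The plan is to read the proposition as the standard Riemannian-geometric equivalence between a Hessian lower bound and $\kappa$-convexity along geodesics, applied to the entropy $\cH$ on the Riemannian manifold $(\cP_*(\cX),\cW)$. Note first that $\Gamma(\mu,\psi)=\langle\nabla\psi,\nabla\psi\rangle_\mu=\|\nabla\psi\|_\mu^2$ is the squared Riemannian norm of the tangent vector $K_\mu\psi$ attached to the gradient field $\nabla\psi$, while the identity $\Hess\cH(\mu)[\nabla\psi]=\Gamma_2(\mu,\psi)$ recorded above shows that the inequality $\Gamma_2(\mu,\psi)\geq\kappa\,\Gamma(\mu,\psi)$, for all $\mu\in\cP_*(\cX)$ and $\psi\in\R^\cX$, is literally the statement $\Hess\cH\geq\kappa\,g$. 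On the other hand $\Ric(\cX,Q,\pi)\geq\kappa$ (Definition \ref{def:intro-Ricci}) says exactly that $\cH$ is $\kappa$-convex along every constant speed $\cW$-geodesic. So the content of the proposition is the equivalence of the pointwise (second-order) and the integrated form of $\kappa$-semiconvexity of $\cH$, and I would establish each implication by a Taylor expansion of $\cH$ along geodesics.

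For ($\Rightarrow$): fix $\mu\in\cP_*(\cX)$ and $\psi\in\R^\cX$ with $\nabla\psi\neq0$, and let $(\mu_t)_{t\in[0,\delta]}$ be the constant speed Riemannian geodesic issuing from $\mu$ with initial velocity $K_\mu\psi$; since $\cP_*(\cX)$ is open in $\PX$, this geodesic exists, is $\cW$-minimizing and stays in the interior for $\delta>0$ small, so $t\mapsto\cH(\mu_t)$ is smooth on $[0,\delta]$. For each $s\in(0,\delta]$ the curve $a\mapsto\mu_{sa}$, $a\in[0,1]$, is a constant speed $\cW$-geodesic from $\mu$ to $\mu_s$ with $\cW(\mu,\mu_s)^2=s^2\|\nabla\psi\|_\mu^2$. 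Plugging it into the convexity inequality of Definition \ref{def:intro-Ricci}, expanding $\cH(\mu_{sa})$ and $\cH(\mu_s)$ to second order in $s$ via $\frac{\dd^2}{\dd t^2}\big|_{t=0}\cH(\mu_t)=\Hess\cH(\mu)[\nabla\psi]=\Gamma_2(\mu,\psi)$, dividing by $s^2$ and letting $s\downarrow0$, the zeroth- and first-order terms cancel and there remains $\tfrac12 a^2\,\Gamma_2(\mu,\psi)\leq\tfrac12 a\,\Gamma_2(\mu,\psi)-\tfrac\kappa2 a(1-a)\,\Gamma(\mu,\psi)$ for all $a\in(0,1)$; dividing by $-\tfrac12 a(1-a)<0$ (which reverses the inequality) gives $\Gamma_2(\mu,\psi)\geq\kappa\,\Gamma(\mu,\psi)$.

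For ($\Leftarrow$): let $(\mu_t)_{t\in[0,1]}$ be a constant speed $\cW$-geodesic, set $L=\cW(\mu_0,\mu_1)$ and $g(t)=\cH(\mu_t)$; since $\cH$ is continuous on the simplex $\PX$ and $t\mapsto\mu_t$ is $\cW$-continuous, $g\in C([0,1])$. Assume first $\mu_0,\mu_1\in\cP_*(\cX)$ and use that then the geodesic stays in the interior, $\mu_t\in\cP_*(\cX)$ for all $t\in(0,1)$. On $(0,1)$ the curve solves the geodesic equation, so $g$ is smooth there and, by the assumed bound $\Hess\cH\geq\kappa\,g$ together with constant speed, $g''(t)=\Hess\cH(\mu_t)[\dot\mu_t]\geq\kappa\|\dot\mu_t\|_{\mu_t}^2=\kappa L^2$; since $g$ extends continuously to $t=0,1$, integrating this differential inequality yields $g(t)\leq(1-t)g(0)+tg(1)-\tfrac\kappa2 t(1-t)L^2$, which is exactly the bound of Definition \ref{def:intro-Ricci}. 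For general endpoints, approximate $\mu_0,\mu_1\in\PX$ by $\mu_0^n,\mu_1^n\in\cP_*(\cX)$ and take connecting geodesics $(\mu_t^n)$; by compactness of $(\PX,\cW)$ and lower semicontinuity of length these subconverge uniformly to a constant speed $\cW$-geodesic from $\mu_0$ to $\mu_1$, and since $\cW(\mu_0^n,\mu_1^n)\to L$ and $\cH$ is continuous on $\PX$, the inequality passes to the limit.

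The step I expect to be the main obstacle is the one structural input used in ($\Leftarrow$): that a $\cW$-geodesic between two \emph{interior} measures never touches the boundary $\partial\PX$ at any $t\in(0,1)$ --- without it the Riemannian picture, and in particular the identity $\Hess\cH=\Gamma_2$, is not available along the geodesic. This is where the precise shape of the action $\cA$ enters: because $\Lambda(s,t)=0$ as soon as $s=0$ or $t=0$, finiteness of $\int_0^1\cA(\mu_t,V_t)\,\dd t$ forces all fluxes into and out of a vertex $x$ to vanish at any time with $\mu_t(x)=0$; combining this with the irreducibility of $Q$ one excludes a vertex from having zero mass on a subinterval of $(0,1)$ while carrying positive mass at both (interior) endpoints. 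In the direction ($\Rightarrow$) the analogous concern --- that one needs geodesics leaving $\mu$ in \emph{every} direction --- is harmless, since $\cP_*(\cX)$ is open and short-time geodesics exist. Once this interior regularity of geodesics is secured, both implications are routine.
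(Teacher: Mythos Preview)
The paper does not prove this proposition; it simply imports it from \cite[Thm.~4.4]{EM12}, so there is no in-paper argument to compare against. Your overall plan --- reading the statement as ``$\Hess\cH\geq\kappa g$ on the Riemannian manifold $(\cP_*(\cX),\cW)$ is equivalent to $\kappa$-convexity of $\cH$ along all $\cW$-geodesics'' --- is exactly the right viewpoint, and your direction ($\Rightarrow$) via second-order Taylor expansion along a short geodesic is clean and correct.

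There is, however, a genuine gap in your ($\Leftarrow$) argument beyond the one you flag. Your approximation step takes interior endpoints $\mu_0^n,\mu_1^n\to\mu_0,\mu_1$, connecting geodesics $(\mu_t^n)$, and passes to a subsequential limit. But this produces the $\kappa$-convexity inequality only along \emph{some} limiting geodesic between $\mu_0$ and $\mu_1$, whereas Definition~\ref{def:intro-Ricci} requires it along \emph{every} constant speed $\cW$-geodesic. Without uniqueness of geodesics (which is not available here in general) this does not close. The way \cite{EM12} handles this is stronger than what you state: one shows that \emph{any} $\cW$-geodesic $(\mu_t)_{t\in[0,1]}$ --- regardless of whether the endpoints lie on the boundary --- satisfies $\mu_t\in\cP_*(\cX)$ for all $t\in(0,1)$. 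Once that is known, your interior argument via $g''(t)=\Gamma_2(\mu_t,\psi_t)\geq\kappa\Gamma(\mu_t,\psi_t)=\kappa L^2$ applies directly to every geodesic, and no approximation of endpoints is needed at all.

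Your sketch of why geodesics avoid the boundary is also incomplete on its own terms: the observation that $\Lambda(\mu_t)(x,y)=0$ forces $V_t(x,y)=0$ whenever $\mu_t(x)=0$ rules out $\mu_t(x)=0$ on a subinterval, but not $\mu_t(x)=0$ at an isolated time $t_0$ with $\mu_t(x)>0$ nearby; there one must control the rate at which $\Lambda(\mu_t)(x,y)\to0$ against the flux to keep the action finite, and this needs a more careful quantitative argument (this is precisely the ``non-trivial'' point the paper alludes to after the proposition).
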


Note that this statement is non-trivial since the Riemannian metric
degenerates at the boundary of $\PX$. In view of \eqref{eq:Gamma},
\eqref{eq:Gamma2}, the criterion above closely resembles (an
integrated version of) the classical Bochner inequality or
Bakry--\'Emery $\Gamma_2$-criterion. Namely, a Riemannian manifold $M$
satisfies $\Ric\geq\kappa$ if and only if for every smooth
function $\psi:M\to\R$ and probability $\mu=\rho\mathrm{vol}$ we have:
\begin{align*}
\int_M \frac12 \left[\Delta\rho |\nabla\psi|^2 -\rho\langle\nabla\psi,\nabla \Delta\psi\rangle \right] \dd\mathrm{vol}
~\geq~ \int_M\rho|\nabla\psi|^2\dd\mathrm{vol}\;,  
\end{align*}
where $\nabla$ now denotes the usual gradient and $\Delta$ denotes the
Laplace--Beltrami operator. In fact, the left hand side equals the
Hessian of the entropy in Otto's formal Riemannian structure on
$\cP(M)$ associated with the $L^2$-Kantorovich distance $W_2$.
\eqref{eq:Gamma} and \eqref{eq:Gamma2} should be seen as discrete
analogues of the integrated carr\'e du champs operators
$\int\Gamma(\psi)\dd\mu$ and $\int\Gamma_2(\psi)\dd\mu$ appearing in
the right resp.~left hand side of Bochner's inequality.

\subsection{Duality for discrete optimal transport}
\label{sec:dual}

Here, we recall a dual formulation for the discrete transport distance
that has been established in \cite{EMW} and which can be seen as a
discrete analogue of the Kantorovich duality. A very similar result in
a slightly more restrictive setting has been proven in \cite{GLM17}
and also existence of dual optimizers has been established, see
Prop.~3.10 and Thm.~5.10, 7.4 there.

\begin{definition}[Hamilton-Jacobi subsolution]\label{def:HJ}
We say that a function $\varphi\in H^1\big((0,T);\R^\cX\big)$ is a \emph{Hamilton--Jacobi subsolution} if for a.e. $t$ in $(0,T)$ we have
\begin{align}\label{eq:HJ}
 \ip{\dot\varphi_t,\mu} + \frac12\norm{\nabla\varphi_t}^2_\mu \leq 0 \quad\forall \mu\in\cP(\cX)\;.  
\end{align}
The set of all Ha\-mil\-ton--Jacobi subsolutions is denoted $\HJ^T_\cX$.
\end{definition}

\begin{remark}\label{rem:HJ-scaling}
 Given $\varphi\in\HJ_\cX^T$ and $\lambda>0$, set $\varphi^\lambda_t:=\lambda \varphi_{\lambda t}$. Then $\varphi^\lambda\in \HJ_\cX^{\lambda T}$.
\end{remark}

\begin{theorem}[{Duality formula, \cite[Thm.~3.3]{EMW}}]\label{thm:dual}
For $\mu_0,\mu_1\in\PX$ we have
\begin{align}\label{eq:dual-W}
 \frac12\cW^2(\mu_0,\mu_1)=\sup\big\lbrace
\ip{\varphi_1,\mu_1}-\ip{\varphi_0,\mu_0}\,:\, \varphi\in \HJ^1_\cX\big\rbrace.
\end{align}
This representation remains true if the supremum is restricted to
functions $\phi\in C^1\big([0,1],\R^\cX\big)$ satisfying
\eqref{eq:HJ}.
\end{theorem}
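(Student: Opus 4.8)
The plan is to prove the two inequalities in \eqref{eq:dual-W} separately, establishing ``$\geq$'' by a direct duality computation along curves in the continuity equation, and ``$\leq$'' by exhibiting an explicit Hamilton--Jacobi subsolution built from the Hopf--Lax semigroup associated with the Riemannian structure. For the easy direction ``$\geq$'', I would fix any admissible pair $(\mu,V)\in\CE_1(\mu_0,\mu_1)$ with $\mu$ smooth and strictly positive, and any subsolution $\varphi\in\HJ^1_\cX$. Writing the continuity equation in the form $\dot\mu_t = K_{\mu_t}\xi_t$ for the gradient field $\xi_t$ with $V_t$ given by $\xi_t\cdot\Lambda(\mu_t)$ (one can always reduce to gradient fields since that only decreases the action), I compute
\begin{align*}
  \ip{\varphi_1,\mu_1} - \ip{\varphi_0,\mu_0}
  = \int_0^1 \Big( \ip{\dot\varphi_t,\mu_t} + \ip{\varphi_t,\dot\mu_t}\Big)\dd t
  = \int_0^1 \Big( \ip{\dot\varphi_t,\mu_t} + \ip{\nabla\varphi_t,\xi_t}_{\mu_t}\Big)\dd t\;,
\end{align*}
using that $\ip{\varphi_t,K_{\mu_t}\xi_t}=\ip{\nabla\varphi_t,\xi_t}_{\mu_t}$ by the definition of $K_\mu$ and integration by parts on the graph. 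Now Young's inequality gives $\ip{\nabla\varphi_t,\xi_t}_{\mu_t} \leq \frac12\norm{\nabla\varphi_t}^2_{\mu_t} + \frac12\norm{\nabla\xi_t}^2_{\mu_t}$, and the subsolution property \eqref{eq:HJ} bounds $\ip{\dot\varphi_t,\mu_t} + \frac12\norm{\nabla\varphi_t}^2_{\mu_t}\leq 0$. Hence the right-hand side is at most $\int_0^1 \frac12\norm{\nabla\xi_t}^2_{\mu_t}\dd t = \int_0^1 \cA(\mu_t,V_t)\dd t$; taking the infimum over admissible pairs and the supremum over $\varphi$ yields $\sup(\cdots)\leq \frac12\cW^2(\mu_0,\mu_1)$, which is the ``$\geq$'' part of \eqref{eq:dual-W}.

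For the harder direction ``$\leq$'', the idea is to produce a competitor $\varphi$ in the supremum that nearly realizes $\frac12\cW^2$. I would take a (smooth, interior) $\cW$-geodesic $(\mu_t)_{t\in[0,1]}$ from $\mu_0$ to $\mu_1$ with associated gradient potential $\psi_t$, i.e.\ $\dot\mu_t = K_{\mu_t}\psi_t$, and set $\varphi_t := \psi_t$ after correcting by a time-integral so that the subsolution inequality becomes an equality along the geodesic. Concretely, along a geodesic the potentials satisfy the Hamilton--Jacobi equation $\dot\psi_t + \frac12\norm{\nabla\psi_t}^2_{\mu_t}=$ const in $x$ (this is the optimality/Euler--Lagrange condition for \eqref{eq:def-W}), and by subtracting the spatial mean one arranges $\ip{\dot\varphi_t,\mu_t}+\frac12\norm{\nabla\varphi_t}^2_{\mu_t}=0$ precisely at $\mu=\mu_t$. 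The subtle point is that \eqref{eq:HJ} demands the inequality for \emph{all} $\mu\in\cP(\cX)$, not just along the geodesic; here one uses that $\mu\mapsto \frac12\norm{\nabla\varphi_t}^2_\mu = \frac12\ip{\nabla\varphi_t,\nabla\varphi_t\cdot\Lambda(\mu)}$ is \emph{linear} in $\mu$ (since $\Lambda(\mu)(x,y)$ is linear in the pair $(\mu(x),\mu(y))$), so $\mu\mapsto \ip{\dot\varphi_t,\mu}+\frac12\norm{\nabla\varphi_t}^2_\mu$ is affine on the simplex $\cP(\cX)$ and attains its maximum at a vertex $\delta_x$. Thus it suffices to further correct $\varphi_t$ by a suitable nonpositive, spatially constant function of $t$ — equivalently, one should take $\varphi_t$ to be the value function of the optimal-control (Hopf--Lax) problem $\varphi_t(x) = \inf\{\ip{\varphi_0,\mu_0}+\int_0^t\frac12\norm{\nabla\xi_s}^2_{\mu_s}\dd s : \dot\mu_s = K_{\mu_s}\xi_s,\ \mu_0 = \mu_0\} $ evaluated appropriately — which is a subsolution of \eqref{eq:HJ} by construction and satisfies $\ip{\varphi_1,\mu_1}-\ip{\varphi_0,\mu_0} = \frac12\cW^2(\mu_0,\mu_1)$ when the infimum is along the geodesic. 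Approximation (mollifying the geodesic, working on $\cP_*(\cX)$ and passing to the boundary, replacing measurable-in-time $\varphi$ by $C^1$ ones via standard regularization) upgrades this to the stated $C^1$ refinement.

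The main obstacle is the ``$\leq$'' direction: one must verify that the natural candidate potential from the geodesic, after the affine correction, genuinely satisfies the HJ-subsolution inequality \emph{globally in $\mu$}, and that the resulting $\varphi$ lies in $H^1((0,1);\R^\cX)$ (resp.\ can be taken $C^1$). The linearity of $\mu\mapsto\Lambda(\mu)(x,y)$ is what makes the global inequality tractable — it reduces an a priori infinite family of constraints to finitely many (one per vertex), and these are handled by a single scalar time-dependent shift. A secondary technical point is the regularity at the boundary of $\PX$: the Riemannian metric degenerates there, geodesics between boundary measures need not stay in $\cP_*(\cX)$, and the potentials $\psi_t$ can blow up; one circumvents this by first proving the duality for $\mu_0,\mu_1\in\cP_*(\cX)$ and then passing to the limit using lower semicontinuity of both sides in $(\mu_0,\mu_1)$ together with continuity of $\cW$. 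I would expect the bookkeeping for the time-regularity and the boundary approximation to be the most laborious part, while the conceptual heart is the affinity argument just described. (Since \cite{EMW} and \cite{GLM17} already contain full proofs, I would in practice cite Theorem~3.3 of \cite{EMW} directly and only sketch the above for the reader's orientation.)
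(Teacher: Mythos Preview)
Your easy direction ``$\geq$'' is essentially correct (up to a harmless factor-of-two slip: $\tfrac12\norm{\nabla\xi_t}^2_{\mu_t}=\tfrac12\cA(\mu_t,V_t)$, not $\cA(\mu_t,V_t)$). Note that the paper itself does not prove this theorem rigorously either: it cites \cite{EMW} and gives only a \emph{heuristic} derivation, and that heuristic takes a different route from yours --- it introduces a Lagrange multiplier $\phi$ for the continuity-equation constraint, integrates by parts, and then swaps inf and sup, using $1$-homogeneity of the action in $(\mu,V)$ to see that the inner infimum is $0$ or $-\infty$ and thereby carve out exactly the HJ-subsolution constraint.

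Your hard direction ``$\leq$'', however, contains a genuine error at what you call its conceptual heart. You assert that
\[
  \mu\ \longmapsto\ \tfrac12\norm{\nabla\varphi_t}^2_\mu
  \ =\ \tfrac14\sum_{x,y}|\nabla\varphi_t(x,y)|^2\,\Lambda\big(\mu(x)Q(x,y),\mu(y)Q(y,x)\big)
\]
is \emph{linear} in $\mu$ because $\Lambda(\mu)(x,y)$ is linear in $(\mu(x),\mu(y))$. This is false: the logarithmic mean $\Lambda(s,t)=\int_0^1 s^\alpha t^{1-\alpha}\dd\alpha$ is $1$-homogeneous and \emph{concave}, but certainly not linear (e.g.\ $\Lambda(1,1)=1$ while $\Lambda(2,0)=0$). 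Consequently $\mu\mapsto \ip{\dot\varphi_t,\mu}+\tfrac12\norm{\nabla\varphi_t}^2_\mu$ is concave on the simplex, and a concave function does \emph{not} attain its maximum at the vertices; your reduction to finitely many constraints and the subsequent ``single scalar time-dependent shift'' do not go through. The nonlinearity of $\Lambda$ is precisely what makes the discrete HJ condition \eqref{eq:HJ} an infinite (and genuinely nonlinear) family of constraints in $\mu$, and is the reason the paper (and \cite{EMW}) proceed via convex duality and a min--max argument rather than by constructing an explicit optimal subsolution from the geodesic potential. Your Hopf--Lax formula is also written in a way that does not define a function of $x$; if you want to pursue this route, you would need a value function of the type considered in \cite{GLM17}, and verifying that it is an HJ-subsolution for all $\mu$ is again a nontrivial step that does not reduce to an affine correction.
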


For the readers convenience, let us also recall the heuristic
derivation of the duality result above. We start by introducing a
Lagrange multiplier for the continuity equation constraint and write
\begin{align}\label{eq:dual1}
  \frac12\cW(\mu_0,\mu_1)^2~&=~\inf\limits_{\mu,V}\sup\limits_\phi\left\{\int_0^1\frac12\cA(\mu_t,V_t)\dd
    t + \int_0^1\ip{\phi_t,\dot\mu_t+\nabla\cdot V_t} \dd
    t\right\}\;,
\end{align}
where the supremum is taken over all (sufficiently smooth) functions
$\phi:[0,1]\to\R^\cX$ and the infimum is taken over all (sufficiently
smooth) curves $\mu: [0,1] \to \R_+$ connecting $\mu_0$ and
$\mu_1$, and over all $V : [0,1] \to \R^{\cX \times \cX}$. Here we do
not require that $(\mu,V)$ satisfies the continuity equation, but the
inner supremum takes the value $+\infty$ if $(\mu,V)$ does not belong
to $\CE_1(\mu_0,\mu_1)$. We also do not require that $\mu$ takes
values in $\PX$, but this is automatically enforced by the continuity
equation. Continuing \eqref{eq:dual1} we obtain via integration by
parts
\begin{align*}
  \frac12\cW(\mu_0,\mu_1)^2~=~\inf\limits_{\mu,V}\sup\limits_\phi\left\{\ip{\phi_1,\mu_1}-\ip{\phi_0,\mu_0}
    + \int_0^1\frac12\cA(\mu_t,V_t)
    -\ip{\dot\phi_t,\mu_t}-\ip{\nabla\phi_t, V_t} \dd
    t\right\}\;.
\end{align*}
Applying the min--max principle and calculating the infimum we obtain
\begin{align*}
  \frac12\cW(\mu_0,\mu_1)^2~=~\sup\big\{\ip{\phi_1,\mu_1}-\ip{\phi_0,\mu_0}
    ~:~\phi\in \cH\big\}\;,
\end{align*}
where $\cH$ is the set of $\phi$ such that for a.e.~$t$ and all $\mu$ and $V$
\begin{align*}
  \frac12\cA(\mu,V) -\ip{\dot\phi_t,\mu}-\ip{\nabla\phi_t,V}\geq 0\;.
\end{align*}
This is due to the fact that the quantity to be
minimized is positive $1$-homogeneous in $(\mu, V)$, hence the
infimum takes the value $-\infty$ if $\phi$ does not belong to
$\cH$. The last inequality rewrites as
\begin{align*}
  0~&\leq~ \frac12\cA(\mu,V) -\ip{\dot\phi_t,\mu} - \ip{\nabla\phi_t,V}\\
  &=~ \frac14\sum_{x,y}\bigg[\frac{V(x,y)^2}{\hat\mu(x,y)}-2\nabla\phi_t(x,y)V(x,y)\bigg] -\ip{\dot\phi_t,\mu}\\
  &=~ \frac14\sum_{x,y}\left(\frac{1}{\hat\mu(x,y)}\Big[V(x,y)-\nabla\phi_t(x,y)\hat\mu(x,y)\Big]^2
 - \abs{\nabla\phi_t(x,y)}^2\hat\mu(x,y)\right)\\
  &\qquad -\ip{\dot\phi_t,\mu}\;.
\end{align*}
Minimizing over $V$ we conclude that $\phi \in \cH$ iff the
inequality
\begin{align*}
  \ip{\dot\phi_t,\mu}+\frac12\norm{\nabla\phi_t}^2_\mu\leq 0\;,
\end{align*}
holds for all $\mu\in\R^\cX_+$, i.e.~iff $\phi\in\HJ_\cX$.

\section{Heat equations on time-dependent Markov triples}
\label{sec:heat}
In this section, we study the heat equation on a time-dependent Markov
triple. This will be a crucial tool for the characterization of super
Ricci flows in Section \ref{sec:RFequiv}. We will first describe in
Section \ref{sec:topo} the setting of time-dependent Markov chains
that we consider, where the state space is allowed to vary and may
feature collapse or creation of vertices. We will briefly discuss in
Section \ref{sec:heat-simple} the heat equation associated to a time
inhomogeneous Markov chain on a fixed state space. In Section
\ref{sec:heat-sing} we will give existence and uniqueness results for
the heat equation and the adjoint heat equation on measures in the
general singular space-time setting.

\subsection{Singular discrete space-times}
\label{sec:topo}

We consider a time dependent family of Markov triples
$(\cX_t,Q_t,\pi_t)_{t\in[0,T]}$. Recall that this means that for each
$t\in[0,T]$, $\cX_t$ is a finite set, $Q_t$ is the matrix of
transition rates $\big(Q_t(x,y)\big)_{x,y\in\cX_t}$ with
$Q_t(x,y)\geq0$ for $x\neq y$ and $Q_t(x,x)=-\sum_{y\neq x}Q_t(x,y)$,
and $\pi_t$ is a strictly positive probability measure on $\cX_t$ such
that $Q_t$ is reversible w.r.t.~$\pi_t$.

\begin{definition}\label{def:sing-triple}
  A \emph{singular time-dependent Markov triple} is a family
  $(\cX_t,Q_t,\pi_t)_{t\in[0,T]}$ of Markov triples such that there
  exist a partition $0=t_0<t_1<\dots<t_n=T$, finite sets
  $\bar\cX_0,\ldots,\bar\cX_n$ and $\cX_0,\ldots\cX_{n-1}$, and
  surjective maps $s_i:\cX_i\to\bar\cX_i$ and
  $c_i:\cX_i\to\bar\cX_{i+1}$ such that the following conditions hold:
\begin{itemize}
\item[(1)] $\cX_{t_i}=\bar\cX_i$ and $\cX_t=\cX_i$ for
  $t\in I_i:=(t_i,t_{i+1})$ for all $i=0,\ldots,n-1$;
\item[(2)] $t\mapsto \pi_t(x)$ is Lipschitz on $I_i$ for all $i$ and $x\in\cX_i$ and the limits
  \begin{align*}
    \pi_i^c(x):=\lim_{t\uparrow t_{i+1}}\pi_t(x)\;,\quad    \pi_i^s(x):=\lim_{t\downarrow t_{i}}\pi_t(x)
  \end{align*}
 exist in $(0,1)$;

\item[(3)] $t\mapsto Q_t(x,y)$ is locally log-Lipschitz on $I_i$,
  i.e.~for each $x\neq y$ either $Q_t(x,y)=0$ for all $t\in I_i$ or
  $Q_t(x,y)>0$ for all $t\in I_i$ and the map $t\mapsto \log Q_t(x,y)$
  is locally Lipschitz and the limits
  \begin{align*}
    Q_i^c(x,y):=\lim_{t\uparrow t_{i+1}}Q_t(x,y)\;,\quad    Q_i^s(x,y):=\lim_{t\downarrow t_{i}}Q_t(x,y)
  \end{align*}
 exist in $[0,\infty]$. In case $Q_i^c(x,y)=+\infty$ resp. $Q_i^s(x,y)=+\infty$, we assume further that
 \begin{align}\label{eq:rate-int-infty}
   \int^{t_{i+1}}Q_t(x,y)\dd t = +\infty\;,\quad\text{resp.~}\int_{t_{i}}Q_t(x,y)\dd t = +\infty\;;
 \end{align}
\item[(4)] we have that $c_i(x)=c_i(y)=z\in\bar\cX_{i+1}$ iff
  $x\overset{c}{\leftrightarrow}y$ and $s_i(x)=s_i(y)=z\in\bar\cX_{i}$
  iff $x\overset{s}{\leftrightarrow}y$, where we write
  $x\overset{c}{\leftrightarrow}y$ iff there exists a path
  $x=x_1,x_2,\dots,x_n=y$ with $Q^c_i(x_j,x_{j+1})=+\infty$ for
  $j=0,n-1$ and similarly for $x\overset{s}{\leftrightarrow}y$ (note
  that these define equivalence relations on $\cX_i$ by detailed
  balance and $c_i$, $s_i$ are the associated quotient maps);

\item[(5)] we have that for $z\in\bar\cX_i$
  \begin{align}\label{eq:limit-pi}
    \pi_{t_i}(z) = \sum_{x\in s_i^{-1}(z)} \pi_i^s(x) =  \sum_{x\in c_{i-1}^{-1}(z)} \pi_{i-1}^c(x)\;,
  \end{align}
    and that for $z,z'\in\bar\cX_i$
    \begin{align}\label{eq:limit-Q}
      Q_{t_i}(z,z') &= \frac{1}{\pi_{t_i}(z)}\sum_{x\in s_i^{-1}(z),x'\in s_i^{-1}(z')}Q_i^s(x,x')\pi_i^s(x)\\\nonumber
&= \frac{1}{\pi_{t_i}(z)}\sum_{x\in c_{i-1}^{-1}(z),x'\in c_{i-1}^{-1}(z')}Q_{i-1}^c(x,x')\pi_{i-1}^c(x)\;.
    \end{align}
\end{itemize}
\end{definition}
Note that \eqref{eq:limit-pi}, \eqref{eq:limit-Q} are automatically
consistent with the requirement that $Q_{t_i}$ and $\pi_{t_i}$ satisfy
the detailed balance condition.

The interpretation of these assumption is the following. During the
open intervals $I_i=(t_i,t_{i+1})$ the graph structure does not change
and we have a log-Lipschitz control on the rates. At the times $t_i$
the topology of the graph can change and (a combination of) the
following event(s) can occur:
\begin{itemize}
\item vertices can disconnect, i.e.~$Q_t(x,y)\searrow0$ as
  $t\uparrow t_i$ or start to connect, i.e.  $Q_t(x,y)\searrow0$ as
  $t\downarrow t_i$,
\item a group of vertices can collapse to a point, here $c_i^{-1}(z)$
  is the set of vertices of $\cX_i$ that collapse to
  $z\in\bar\cX_{i+1}$, this happens iff each pair of vertices in the
  group is connected via a path of edges whose weights explode,
\item a point can spawn a group of new vertices at later times (same
  as collapse but backwards in time), here $s_i^{-1}(z)$ is the set of
  vertices of $\cX_i$ that are spawned by $z\in\bar\cX_{i}$,
\item collapsing happens in a controlled way, more precisely, ratios
  of rates inside a collapsing group have a limit and
  \begin{align}\label{eq:local-eq}
    \bar\pi^{c,z}_i(x):=\pi_i^c(x)\left(\sum_{y\in c_i^{-1}(z)}\pi^c_i(y)\right)^{-1}
  \end{align}
  can be seen as the asymptotic equilibrium measure on $c_i^{-1}(z)$
  as we ``zoom into the collapse''; similarly for spawning.
\end{itemize}

\begin{figure}[h]
  \centering
  \resizebox{4in}{!}{\input{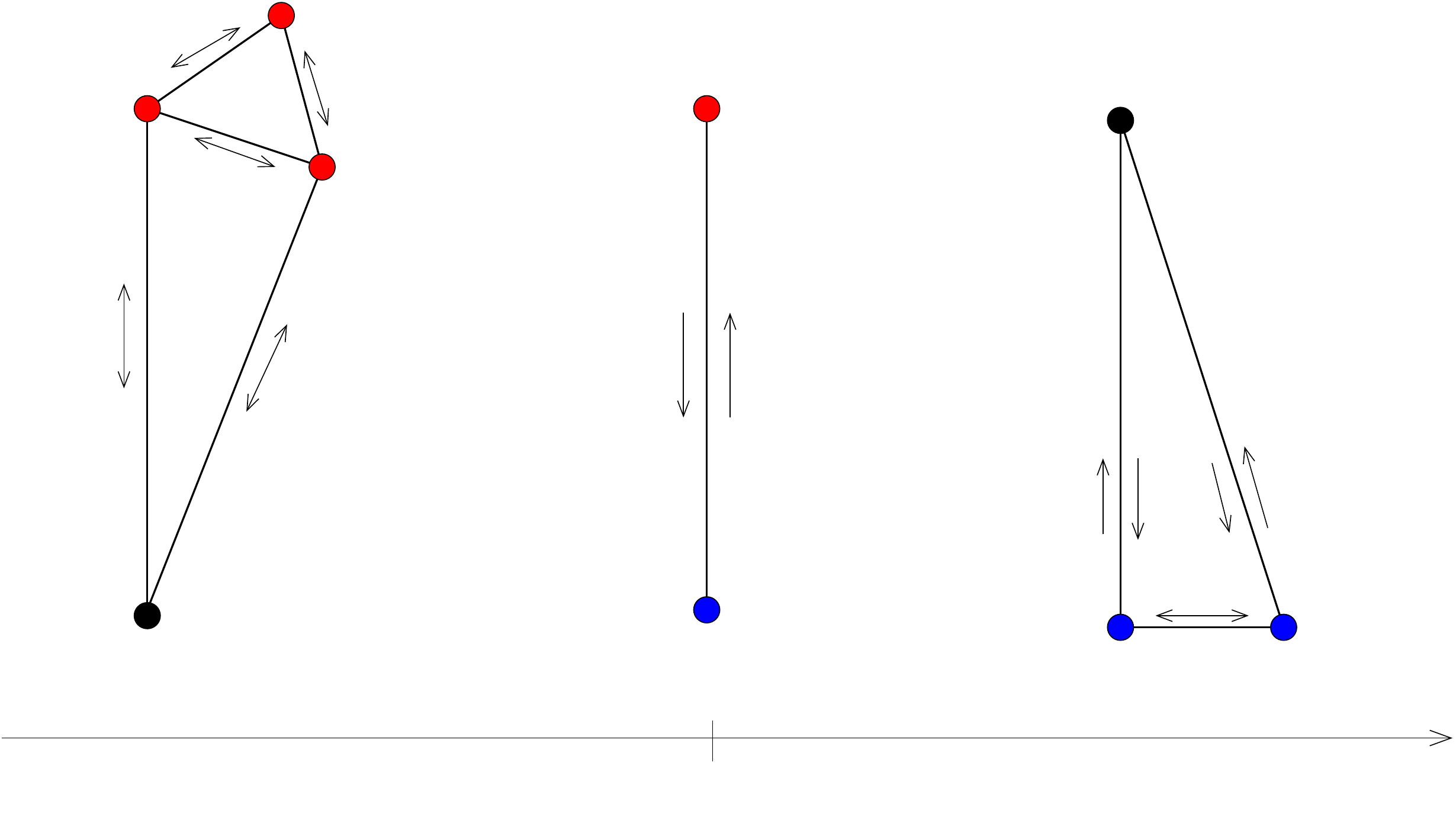_t}}
  \caption{A singular time-dependent Markov chain}
  \label{fig:example}
\end{figure}

\begin{example}\label{ex:toy}
  A simple example of a singular time-dependent Markov triple
  satisfying these conditions is given in Figure
  \ref{fig:example}. Here the transition rate $Q_t(x,y)$ is depicted
  with arrows from $x$ to $y$ along the edges. The three red vertices
  collapse at time $t_1$ to a single vertex. Afterwards, the blue
  vertex of $\bar\cX_1$ spawns a new vertex. Here we could set for
  instance $q_t=1/(t_1-t)$ and $r_t= 1/(t-t_1)$ so that
  $\int^{t_1}q_t\dd t=\int_{t_1}r_t\dd t =+\infty$.
\end{example}

In Section \ref{sec:ex} we discuss more examples that arise as super
Ricci flows and which feature a similar $1/t$ behavior of the rates
at singular times.

\begin{remark}\label{rem:explosion}
  From the point of view of the heat equation on the time dependent
  graph, one should expect that if a group of vertices collapses at
  $t_i$, then the solution has already equilibrated on these vertices
  before the collapsing time. This is the case if the average number
  of jumps between these vertices before $t_i$ is infinite. This is
  the reason why we assume \eqref{eq:rate-int-infty}.

  Without this condition the heat equation does not
  necessarily equilibrate on vertices with exploding rates before a
  singular time. Consider e.g.~the two-point space $\cX_0=\{a,b\}$ with
$Q_t(a,b)=Q_t(b,a)=q_t$ on $(t_0,t_1)$. Then, for the solution 
to the heat equation $\partial_t\psi=\Delta_t\psi$ (see below) we have explicitly with $\delta_t=\psi(t,b)-\psi(t,a)$ that 
\begin{align*}
 \frac{\dd}{\dd t} \delta_t = -\delta_tq_t\;,\quad \delta_t = \delta_{s}\exp\Big(-\int_s^tq_r\dd r\Big)\;.
\end{align*}
Choosing for instance $q_t=1/\sqrt{t_1-t}$, we see that $\delta_t$ does not vanish as $t\uparrow t_1$ unless $\delta_s=0$. 
\end{remark}

We will denote by $\dot Q_t,\dot\pi_t$ the derivatives w.r.t.~$t$ of
$Q_t$ and $\pi_t$ which exist for a.e.~$t\in[0,T]$ by the assumption
of Lipschitz continuity, (2) and (3) above.

We denote by $\Delta_t$ the Laplace operator associated to $Q_t$
acting on function $\psi\in\R^\cX$ via
\begin{align*}
  \Delta_t\psi (x) = \sum_{y\in\cX} \psi(y) Q_t(x,y) = \sum_{y\in\cX} \big[\psi(y)-\psi(x)\big] Q_t(x,y)\;. 
\end{align*}
Let us introduce the inner products on $\R^{\cX_t}$ and $\R^{\cX_t\times\cX_t}$ respectively given by 
\begin{align*}
  \ip{\psi,\phi}_{\pi_t}:=\sum_{x\in\cX_t} \psi(x)\phi(x)\pi_t(x)\;,\quad\ip{\Psi,\Phi}_{\pi_t}:=\frac12\sum_{x,y\in\cX_t} \Psi(x,y)\Phi(x)Q_t(x,y)\pi_t(x)\;.
\end{align*}
Then $\Delta_t$ is symmetric, i.e.~we have that
$\ip{\psi,\Delta_t\phi}_{\pi_t}=\ip{\Delta_t\psi,\phi}_{\pi_t}$.
Moreover, we have the following integration by parts relation
$\ip{\nabla\phi,\nabla\psi}_{\pi_t}=-\ip{\Delta_t\phi,\psi}_{\pi_t}$
for all $\phi,\psi\in\R^{\cX_t}$.

For a function $\sigma\in\R^\cX$ (viewed as a signed measure on $\cX$)
we define the adjoint Laplace operator $\hat\Delta_t$ via
\begin{align*}
  \hat\Delta_t\sigma(x) = \sum_{y\in\cX}Q_t(y,x)\sigma(y) = \sum_{y\neq x}Q_t(y,x)\sigma(y) -\sum_{y\neq x}Q_t(x,y)\sigma(x)\;.
\end{align*}
Note that if $\sigma=\rho\pi_t$ for some $\rho\in\R^\cX$ then
$\hat\Delta_t\sigma=(\Delta_t\rho)\pi_t$ by the detailed balance condition. 

Further, denoting the Euclidean inner product $\psi,\sigma\in\R^{\cX_t}$ (viewed as the integral of $\psi$ against $\sigma$) by
\begin{align*}
  \ip{\psi,\sigma}:=\sum_{x\in\cX_t}\psi(x)\sigma(x)\;,
\end{align*}
we have that $\ip{\Delta_t\psi,\sigma} = \ip{\psi,\hat\Delta_t\sigma}$.

\subsection{The heat equations}\label{sec:heat-simple}
Let us first consider the situation of a time-dependent Markov triple
$(\cX,Q_t,\pi_t)_{t\in(0,T)}$ with a fixed space $\cX$ and
time-dependent rates $Q_t$ and measure $\pi_t$ that are locally
log-Lipschitz in $t$.

Given $\bar\psi\in\R^\cX$ and $0<s<T$ we say that a function
$\psi:[s,T)\times\cX\to\R$ solves the time-dependent heat equation
with initial condition $\bar\psi$ if $t\mapsto\psi(t,x)$ is
differentiable on $(s,T)$ and continuous at $s$ for all $x$ and
\begin{align*}
 \partial_t\psi(t,x)&=\Delta_t\psi(t,x) \quad\text{on } (s,T)\times \cX\;,\\
\psi(s,\cdot)&=\bar\psi\;.
\end{align*}
Note that by continuity of $t\mapsto Q_t$,
there is a unique such solution. Thus we can define the heat
propagator $P_{t,s}:\R^\cX\to\R^\cX$ by setting
$P_{t,s}\bar\psi=\psi(t,\cdot)$, where $\psi$ is the above solution.

Given $\bar\sigma\in\R^\cX$ and $0<t<T$, we say that a function
$\sigma:(0,t]\to\R^\cX$ satisfies the adjoint heat equation for
measures with terminal condition $\bar\sigma$ if $s\mapsto\sigma(s,x)$
is differentiable on $(0,t)$ and continuous at $t$ for all $x$ and
\begin{align*}
 \partial_s\sigma(s,x)&=-\hat\Delta_s\sigma(s,x)\quad\text{on }(0,t)\times \cX\;,\\
\sigma(t,\cdot)&=\bar\sigma\;.
\end{align*}
There exist a unique such solution. We define the adjoint heat
propagator $\hat P_{t,s}:\R^\cX\to\R^\cX$ by setting
$\hat P_{t,s}\bar\sigma=\sigma(s,\cdot)$, where $\sigma$ is the above solution.

Note that if $\bar \sigma=\bar\rho\pi_t$, then we have
$\hat P_{t,s}\bar \sigma = \rho_s\pi_s$, where $\rho_s$ solves the adjoint heat equation
 \begin{align*}
 \partial_s\rho(s,x)=-\Delta_s\rho(s,x) -\dot p_s(x)\rho(s,x)\;,
\end{align*}
where $p_s=\log \pi_s$. Note that $P_{t,s}$ and $\hat P_{t,s}$ are
linear operators. Moreover, they are adjoint in the following sense:
for all $0<s\leq t<T$ and $\bar\sigma\in\R^\cX$ we have
\begin{align}\label{eq:adjoint-simple}
  \ip{P_{t,s} \bar\psi,\bar\sigma}=\ip{\bar\psi,\hat P_{t,s}\bar\sigma}\;.
\end{align}
Indeed, setting $\psi_r=P_{r,s}\bar\psi$ and $\sigma_r=\hat P_{t,r}\bar\sigma$ for $s\leq r\leq t$ we have 
\begin{align*}
  \frac{\dd}{\dd r} \ip{\psi_r,\sigma_r} 
=
\ip{\Delta_r\psi_r,\sigma_r} -\ip{\psi_r,\hat\Delta_r\sigma_r} =0\;.
\end{align*}

We note the following maximum and positivity principles for the (adjoint) heat equation.
\begin{lemma}\label{lem:max-principle}
  For all $0<s<t<T$ and $\bar\psi,\bar\sigma\in\R^\cX$ and $x\in\cX$ we have that
  \begin{align}\label{eq:max-principle}
   &\min_{y\in\cX} \bar\psi(y)\leq  P_{t,s}\bar\psi(x)
    \leq \max_{y\in\cX}\bar\psi (y)\;,\\\label{eq:max-principle-adj}
  &\min_{y\in\cX} \bar\sigma(y)\leq \hat P_{t,s}\bar\sigma(x)
    \leq \max_{y\in\cX}\bar\sigma (y)\;.
  \end{align}
  Moreover, $P_{t,s}\bar\psi$ and $\hat P_{t,s}\bar\sigma$ are
  strictly positive provided that $\bar\psi$ and $\bar\sigma$ are
  non-negative and not identically $0$.
\end{lemma}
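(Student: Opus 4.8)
The plan is to read \eqref{eq:max-principle} and the strict positivity of $P_{t,s}$ as a finite-dimensional parabolic (strong) maximum principle, to transfer the statements for $\hat P_{t,s}$ via the duality \eqref{eq:adjoint-simple}, and to treat the two-sided bound \eqref{eq:max-principle-adj} by a separate argument. For the upper bound in \eqref{eq:max-principle}, fix $\bar\psi$, write $M:=\max_y\bar\psi(y)$, $\psi(r,\cdot):=P_{r,s}\bar\psi$, and for $\delta>0$ consider $u_\delta(r,x):=\psi(r,x)-M-\delta(r-s)$, which is $\le0$ at $r=s$. If $u_\delta$ ever became positive, then since $\cX$ is finite and $r\mapsto\psi(r,\cdot)$ is continuous there would be a first violation time $r^*$ and, by finiteness of $\cX$, a vertex $x^*$ maximising $\psi(r^*,\cdot)$ with $u_\delta(r^*,x^*)=0$ and $u_\delta(r,x^*)>0$ along some sequence $r\downarrow r^*$. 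But $x^*$ being a maximiser forces $\Delta_{r^*}\psi(r^*,x^*)=\sum_y[\psi(r^*,y)-\psi(r^*,x^*)]Q_{r^*}(x^*,y)\le0$, since $Q_{r^*}(x^*,\cdot)\ge0$ off the diagonal; hence, by continuity, $\partial_r u_\delta(r,x^*)=\Delta_r\psi(r,x^*)-\delta<0$ for $r$ just above $r^*$, so $u_\delta(r,x^*)<0$ there --- a contradiction. Thus $\psi\le M$; letting $\delta\downarrow0$ and applying the same reasoning to $-\bar\psi$ yields \eqref{eq:max-principle}.

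For strict positivity, assume $\bar\psi\ge0$, $\bar\psi\not\equiv0$, so that $\psi(r,\cdot)\ge0$ throughout by the above. Writing $\Delta_r\psi(x)=-c_r(x)\psi(x)+\sum_{y\ne x}Q_r(x,y)\psi(y)$ with $c_r(x):=-Q_r(x,x)\ge0$, the solution admits the variation-of-constants representation
\begin{align*}
 \psi(t,x)=e^{-\int_s^t c_r(x)\dd r}\bar\psi(x)+\int_s^t e^{-\int_u^t c_r(x)\dd r}\sum_{y\ne x}Q_u(x,y)\psi(u,y)\dd u
\end{align*}
in which every term is nonnegative; picking $z$ with $\bar\psi(z)>0$ and iterating along a path from $z$ to $x$ with strictly positive edge-rates --- which exists because irreducibility pins down a fixed connected edge set on the interval --- gives $\psi(t,x)>0$ for every $x$. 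Strict positivity of $\hat P_{t,s}$ then drops out of \eqref{eq:adjoint-simple}: for $\bar\sigma\ge0$, $\bar\sigma\not\equiv0$, one has $\hat P_{t,s}\bar\sigma(x)=\ip{P_{t,s}\one_{\{x\}},\bar\sigma}>0$ because $P_{t,s}\one_{\{x\}}>0$ and $\bar\sigma\ge0$, $\bar\sigma\not\equiv0$.

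The delicate part is the two-sided estimate \eqref{eq:max-principle-adj}. Running the argument of the first paragraph backward in time for $\partial_r\sigma=-\hat\Delta_r\sigma$ does not work directly, because $\hat\Delta_r$ is \emph{not} a Markov generator --- its ``rows'' sum to zero only when the in-rates and out-rates agree at every vertex --- so $\hat\Delta_r\sigma$ need not have a sign at an interior extremum of $\sigma(r,\cdot)$. The route I would take is to pass to the density $\rho_r:=\sigma(r,\cdot)/\pi_r$, which solves a heat equation with a zeroth-order perturbation, $\partial_r\rho=-\Delta_r\rho-\dot p_r\rho$ with $p_r=\log\pi_r$: the principal part $-\Delta_r$ now does obey the comparison, and the remaining task is to absorb the lower-order term using the Lipschitz control of $t\mapsto\log\pi_t$, combined with the duality \eqref{eq:adjoint-simple} (under which $P_{t,s}$ is order-preserving and sup-norm contractive) to transfer sign information between the primal and adjoint flows. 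I expect this last step to be the main obstacle.
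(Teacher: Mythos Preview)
Your treatment of \eqref{eq:max-principle} via the barrier $u_\delta$ and a first-violation-time argument is correct. The paper takes a different route: it tests the equation against the negative part $\psi_t^-=\max\{-(P_{t,s}\bar\psi-m),0\}$ in the weighted inner product $\ip{\cdot,\cdot}_{\pi_t}$ and shows that $e^{-Lt}\sum_x(\psi_t^-)^2\pi_t(x)$ is nonincreasing, using the Lipschitz bound on $\log\pi_t$. Your argument is more elementary and never touches $\pi_t$. Your Duhamel argument for strict positivity of $P_{t,s}$ is also fine and more explicit than the paper's brief appeal to the time-homogeneous theory on compact subintervals; the transfer of strict positivity to $\hat P_{t,s}$ via \eqref{eq:adjoint-simple} matches the paper exactly.

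For \eqref{eq:max-principle-adj} the paper does not pass to densities as you propose: it simply writes $\hat P_{t,s}\bar\sigma(x)=\ip{P_{t,s}\delta_x,\bar\sigma}$ and says ``apply \eqref{eq:max-principle}''. Your instinct that something is off is well founded, but the obstacle is more basic than a missing trick: the two-sided bound \eqref{eq:max-principle-adj} is \emph{false} in general. The weights $(P_{t,s}\delta_x)(z)$ are nonnegative by positivity of $P_{t,s}$, but they do not sum to $1$ unless $\pi$ is uniform (the heat flow preserves constants, not the counting measure), so $\ip{P_{t,s}\delta_x,\bar\sigma}$ is not a convex combination of the values of $\bar\sigma$. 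For a concrete counterexample take the time-homogeneous two-point chain $\cX=\{a,b\}$ with $Q(a,b)=1$, $Q(b,a)=2$, $\pi=(\tfrac23,\tfrac13)$, and $\bar\sigma=(1,1)$: solving the adjoint equation one finds $\hat P_{t,s}\bar\sigma(a)=\tfrac13\big(4-e^{3(s-t)}\big)>1=\max\bar\sigma$ for every $s<t$. Your density route cannot rescue the statement either, since a comparison principle for $\rho_r=\sigma_r/\pi_r$ (even ignoring the lower-order $\dot p_r\rho_r$ term) would control $\sigma_r$ only by $\pi_r\cdot\max_y\bar\sigma(y)/\pi_t(y)$, not by $\max_y\bar\sigma(y)$. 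What \emph{does} follow from duality --- and is all that the paper actually needs downstream --- is positivity of $\hat P_{t,s}$ (which you prove) together with mass preservation $\sum_x\hat P_{t,s}\bar\sigma(x)=\sum_x\bar\sigma(x)$; these give $0\le\hat P_{t,s}\bar\sigma(x)\le\sum_y\bar\sigma(y)$ for $\bar\sigma\ge0$, which is the correct replacement for \eqref{eq:max-principle-adj}.
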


\begin{proof}
  Let us first show that $P_{t,s}\bar\psi\geq0$ whenever
  $\bar\psi\geq0$. For this define
  $\psi_t^-:=\max\{-P_{t,s}\bar \psi,0\}$ and
  $\psi_t=P_{t,s}\bar\psi$. For $s<t<T$, $r\mapsto\log\pi_r$ is
  Lipschitz on $[s,t]$ with some constant $L$. Thus, we obtain
\begin{align*}
0&\leq\frac12\sum_{x,y\in\cX}(\psi_t^-(x)-\psi_t^-(y))^2Q_t(x,y)\pi_t(x)\\
&\leq -\frac12\sum_{x,y\in\cX}(\psi_t^-(x)-\psi_t^-(y))(\psi_t(x)-\psi_t(y))Q_t(x,y)\pi_t(x)\\
&=\sum_{x\in\cX}\psi_t^-(x)\Delta_t\psi_t(x)\pi_t(x)=\sum_{x\in\cX}\psi_t^-(x)\partial_t\psi_t(x)\pi_t(x)\\
&=-\frac12\sum_{x\in\cX}\partial_t(\psi_t^-(x))^2\pi_t(x)
\leq-\frac12e^{Lt}\partial_t\sum_{x\in\cX}e^{-Lt}(\psi_t^-(x))^2\pi_t(x),
\end{align*}
and in particular
\begin{align*}
0=\sum_{x\in\cX}e^{-Ls}(\psi_s^-(x))^2\pi_s(x)\geq \sum_{x\in\cX}e^{-Lt}(\psi_t^-(x))^2\pi_t(x)\;,
\end{align*}
which implies that $\psi_t\geq 0$.  

Now, let $m=\min_{y\in\cX}\bar\psi(y)$ and
$M=\max_{y\in\cX}\bar\psi(y)$. Then \eqref{eq:max-principle} follows
similarly by choosing $\psi_t^-:=\max\{-(P_{t,s}\bar \psi-m),0\}$ and
$\psi_t^+:=\max\{P_{t,s}\bar \psi-M,0\}$ respectively.

  To show \eqref{eq:max-principle-adj} it suffices to note that
  \begin{align*}
  \hat P_{t,s}\bar\sigma (x) = \ip{\delta_x,\hat P_{t,s}\bar\sigma} = \ip{P_{t,s} \delta_x,\bar\sigma} 
  \end{align*}
  and to apply \eqref{eq:max-principle}.

  The last statement follows from the fact that due to the Lipschitz
  assumption, the transition rates can be controlled on each compact
  subinterval of $(0,T)$ and then applying standard results for
  time-homogeneous Markov chains and the duality
  \eqref{eq:adjoint-simple}.
\end{proof}

In particular, we see that the heat equation preserves constants,
i.e.~$P_{t,s}\bar\psi\equiv c$ provided $\bar\psi\equiv c$. On the
other hand, the adjoint heat equation preserves mass, i.e.~
\begin{align}\label{eq:mass-preserv}
  \sum_{x\in\cX}\hat P_{t,s}\bar\sigma(x) = \sum_{x\in\cX}\bar\sigma(x)\;.
\end{align}
(this follows form \eqref{eq:adjoint-simple} choosing $\psi\equiv
1$).
Combining with the maximum principle, we see that the adjoint heat
equation preserves probability measures, i.e.
$\hat P_{t,s}\mu\in\cP(\cX)$ provided $\mu\in\cP(\cX)$.

Using the propagator, the (adjoint) heat equation reads
\begin{align*}
  \partial_tP_{t,s}\bar\psi = \Delta_tP_{t,s}\bar\psi\;,
  \quad
\partial_s\hat P_{t,s}\bar\sigma = -\Delta_s\hat P_{t,s}\bar\sigma\;.
\end{align*}
We can also take the derivative in the other time parameter, obtaining
\begin{align}\label{eq:other-index}
  \partial_sP_{t,s}\bar\psi = -P_{t,s}\Delta_s\bar\psi\;,
  \quad
\partial_s\hat P_{t,s}\bar\sigma = \hat P_{t,s}\Delta_t\bar\sigma\;.
\end{align}
This follows by noting that for $h>0$ we have
\begin{align*}
   P_{t,s+h}\bar\psi- P_{t,s}\bar\psi &= 
   P_{t,s+h}\Big[\bar\psi-P_{s+h,s}\bar\psi\Big]
= - P_{t,s+h} \int_s^{s+h}\Delta_r P_{r,s}\bar\psi\dd r\;,
\end{align*}
and then dividing by $h$ and letting $h\downarrow 0$. Similarly, one argues
for the left derivative and for the adjoint equation.

\subsection{The heat equations on singular space-times}
\label{sec:heat-sing}

Now, let us consider to the general setting of Section \ref{sec:topo}
and consider a singular time-dependent Markov triple
$(\cX_t,Q_t,\pi_t)_{t\in[0,T]}$ according to Definition
\ref{def:sing-triple}. We will show existence and uniqueness of
solutions to the heat equations on functions and measures across
singular times. To this end for $0\leq s<t\leq T$, let us define
space-time during the interval $[s,t]$ by setting
\begin{align}\label{eq:space-time}
  \cS_{s,t}:=\big\{(r,x) : r\in[s,t],~x\in\cX_r\big\}\;.
\end{align}

\begin{theorem}\label{thm:heat-eq}
  Given $s\in[0,T]$ and $\bar\psi\in\R^{\cX_s}$ there exist a unique
  function $\psi:\cS_{s,T}\to\R$ with the following properties:
  \begin{itemize}
  \item[(i)] $\psi(s,\cdot)=\bar\psi$,
  \item[(ii)] $t\mapsto \psi(t,\cdot)$ is differentiable on
    $I_i=(t_i,t_{i+1})$ and satisfies
    $\partial_t\psi(t,x)=\Delta_t\psi(t,x)$ on $I_i\times\cX_i$,
  \item[(iii)] for all $z\in \bar\cX_i$, $x\in s_i^{-1}(z)$ and $y\in c_{i-1}^{-1}(z)$ we have
    \begin{align}\label{eq:heat-eq-cont}
      \psi(t_i,z)=\lim_{t\downarrow t_i}\psi(t,x)
=\lim_{t\uparrow t_i}\psi(t,y)\;.
 \end{align}
\end{itemize}
Given $t\in[0,T]$ and $\bar\sigma\in\R^{\cX_t}$ there exist a unique
function $\sigma:\cS_{0,t}\to[0,\infty)$ with the following
properties:
  \begin{itemize}
  \item[(i)] $\sigma(t,\cdot)=\bar\sigma$,
  \item[(ii)] $s\mapsto \sigma(s,\cdot)$ is differentiable on
    $I_i=(t_i,t_{i+1})$ and satisfies
    $\partial_s\sigma(s,x)=-\hat\Delta_s\sigma(s,x)$ on
    $I_i\times\cX_i$,
  \item[(iii)] for all $z\in \bar\cX_i$ we have 
    \begin{align}\label{eq:heat-eq-adj-cont}
     \sigma(t_i,z)= \sum_{x\in s_i^{-1}(z)}\lim_{s\downarrow t_i}\sigma(s,x)
 =\sum_{y\in c_{i-1}^{-1}(z)}\lim_{s\uparrow t_i}\sigma(s,y) \;.
    \end{align}
  \end{itemize}
\end{theorem}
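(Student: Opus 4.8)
The natural strategy is induction over the singular times $t_0 < t_1 < \dots < t_n$, treating the heat equation for $\psi$ (forward in time) and the adjoint heat equation for $\sigma$ (backward in time) as separate but symmetric statements. I will describe the forward case; the backward case follows by reversing the time direction and interchanging the roles of $c_i$ and $s_i$ (and using the duality \eqref{eq:adjoint-simple} to transfer positivity and mass control). The base of the induction is the interval containing $s$: on $I_i = (t_i, t_{i+1})$ the space is fixed ($\cX_t \equiv \cX_i$) and the rates are locally log-Lipschitz, so by the discussion in Section \ref{sec:heat-simple} there is a unique solution with $\psi(s,\cdot) = \bar\psi$, and it extends continuously to the right endpoint $\pi_t$ by Lipschitz control of the rates \emph{away from exploding edges} — but the point where work is needed is precisely the behaviour as $t \uparrow t_{i+1}$ along edges whose weights blow up, i.e.\ across a collapse.

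The key lemma, which is the technical heart, is the following equilibration statement: if $C \subset \cX_i$ is a group of vertices collapsing to a single point $z$ at $t_{i+1}$ (so all pairs in $C$ are joined by edges with $Q^c_i = +\infty$ and $\int^{t_{i+1}} Q_t \dd t = +\infty$), then for any solution of $\partial_t \psi = \Delta_t \psi$ on $I_i$ the oscillation $\max_{x \in C}\psi(t,x) - \min_{x \in C}\psi(t,x)$ tends to $0$ as $t \uparrow t_{i+1}$, and moreover $\lim_{t\uparrow t_{i+1}}\psi(t,x)$ exists and is the same for all $x \in C$. The plan for this is a Gr\"onwall-type argument: it suffices to treat the case where $C$ is joined by a single exploding edge $\{a,b\}$ (the general case follows because the collapse relation is generated by such edges, by condition (4)), and then, as in Remark \ref{rem:explosion}, one isolates the dynamics of $\delta_t := \psi(t,b) - \psi(t,a)$ and shows $|\delta_t| \le |\delta_{t_0'}| \exp(-\int_{t_0'}^t Q_r(a,b)\,\pi\text{-weighted}\dd r) + (\text{contribution from the rest of the graph})$, where the rest-of-graph contribution is controlled because those rates are bounded near $t_{i+1}$ and $\psi$ is bounded by the maximum principle; the exploding integral \eqref{eq:rate-int-infty} then forces $\delta_t \to 0$. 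Once the oscillation over $C$ vanishes, one shows $\lim_{t\uparrow t_{i+1}}\psi(t,x)$ exists: the "frozen" common value $\bar\psi_C(t) := \sum_{x\in C}\bar\pi^{c,z}_i(x)\psi(t,x)$ (weighted by the asymptotic equilibrium measure \eqref{eq:local-eq}) has a bounded $t$-derivative near $t_{i+1}$ — because in this weighted average the exploding-edge contributions cancel by antisymmetry of $\nabla\psi$ against the detailed-balance weights — hence it is Lipschitz and converges; combined with vanishing oscillation this gives the common limit, which we \emph{define} to be $\psi(t_{i+1}, z)$. This verifies \eqref{eq:heat-eq-cont} for the "$t \downarrow$" side after one more induction step and the full statement for the "$t \uparrow$" side; the fact that the limiting data $(\cX_{t_{i+1}}, Q_{t_{i+1}}, \pi_{t_{i+1}})$ is exactly the Markov triple obtained by quotienting (conditions (5), \eqref{eq:limit-pi}, \eqref{eq:limit-Q}) is what makes the induction close — one must check that restarting the heat equation from $\psi(t_{i+1},\cdot)$ on $\bar\cX_{i+1}$ is consistent, i.e.\ that the quotiented generator acts correctly on the limiting function.

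Uniqueness is the easier direction and I would handle it last: any two solutions $\psi^1, \psi^2$ satisfying (i)--(iii) have difference $w = \psi^1 - \psi^2$ solving the homogeneous equation on each $I_i$ with $w(s,\cdot) = 0$, hence $w \equiv 0$ on $I_0$ (the interval of $s$) by ODE uniqueness, hence $w(t_{i+1}, z) = 0$ by the continuity condition \eqref{eq:heat-eq-cont}, hence $w \equiv 0$ on $I_{i+1}$, and so on; no energy estimate is even needed. The main obstacle is unquestionably the equilibration lemma across a collapse — specifically making the Gr\"onwall estimate uniform when several exploding edges coexist inside one collapsing cluster and their rates blow up at possibly different speeds (only \emph{ratios} are assumed to converge, by condition (4) and the paragraph after it); the resolution is to work one edge at a time along a spanning path within the cluster and to note that $\int^{t_{i+1}} Q_t(x,y)\dd t = +\infty$ on \emph{each} such edge, so each pairwise difference is killed, and a finite chain of triangle inequalities propagates this to the whole cluster. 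For $\sigma$, the adjoint equation has the structure $\partial_s \rho = -\Delta_s \rho - \dot p_s \rho$ on densities, so the same equilibration argument applies to $\rho = \sigma/\pi$, and the summation form of \eqref{eq:heat-eq-adj-cont} (rather than the pointwise-equality form \eqref{eq:heat-eq-cont}) is exactly what mass preservation \eqref{eq:mass-preserv} dictates when a cluster of mass-carrying vertices merges; positivity of $\sigma$ is inherited from Lemma \ref{lem:max-principle} on each interval and passes to the limit.
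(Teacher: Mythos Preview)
Your overall architecture (induction across singular times, equilibration via a Gronwall argument, uniqueness by the maximum principle) is sound, and your uniqueness sketch is essentially correct. But there are two genuine gaps in the existence part.

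\textbf{Equilibration on clusters with several exploding edges.} Your reduction ``it suffices to treat a single exploding edge $\{a,b\}$; the general case follows by a spanning path'' does not work as stated. If $C=c_i^{-1}(z)$ contains more than two vertices, then the Gronwall estimate for $\delta_t=\psi(t,b)-\psi(t,a)$ reads
\[
\partial_t\delta_t = -\big(Q_t(a,b)+Q_t(b,a)\big)\delta_t
+\sum_{y\notin\{a,b\}}\Big[(\psi(y)-\psi(b))Q_t(b,y)-(\psi(y)-\psi(a))Q_t(a,y)\Big],
\]
and for $y\in C$ the last sum contains \emph{exploding} rates $Q_t(b,y),Q_t(a,y)$ multiplying differences that you have not yet shown to vanish. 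So the ``rest of the graph'' term is not bounded, and a triangle-inequality chain along a spanning path does not close. The paper avoids this coupling by treating the whole cluster at once: with $m(t)=\sum_{x\in C}\psi(t,x)\pi_t(x)$ and $v(t)=\sum_{x\in C}|\psi(t,x)-m(t)|^2\pi_t(x)$ one gets $\dot m$ Lipschitz (the exploding contributions cancel by detailed balance) and $\dot v\le -2Q_*(t)\,v + C$ with $Q_*(t)$ the minimal exploding rate in $C$; then $\int^{t_{i+1}}Q_*=\infty$ forces $v\to0$. This cluster-level variance argument is the clean replacement for your edge-by-edge plan.

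\textbf{Restarting from a singular time (the spawn side).} Your inductive step ``restart the heat equation from $\psi(t_{i+1},\cdot)$ on $\bar\cX_{i+1}$'' hides exactly the nontrivial construction: the equation on $I_{i+1}$ lives on $\cX_{i+1}$, which is \emph{larger} than $\bar\cX_{i+1}$ when spawning occurs, and the rates inside a spawn group explode as $t\downarrow t_{i+1}$, so standard ODE theory does not give a solution starting at $t_{i+1}$. One can try to start at $t_{i+1}+\varepsilon$ with the constant-on-spawn-groups extension and send $\varepsilon\to0$, but this requires precisely the kind of uniform estimate you have not supplied. The paper sidesteps this by duality: having already built $\hat P_{t,t_{i+1}}$ (the adjoint equation runs \emph{away} from the spawn singularity, so Step~2 of the paper's proof gives it directly), one \emph{defines} $P_{t,t_{i+1}}\bar\psi$ via $\langle P_{t,t_{i+1}}\bar\psi,\tilde\sigma\rangle=\langle\bar\psi,\hat P_{t,t_{i+1}}\tilde\sigma\rangle$ and then checks it solves the heat equation by differentiating $\hat P_{t,t_{i+1}}$ in $t$. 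This use of duality to construct the forward solution from the backward one (and vice versa for $\hat P_{t_{i+1},s}$ from $P_{t_{i+1},s}$) is the missing idea in your outline.
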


We define the heat propagator $P_{t,s}:\R^{\cX_s}\to\R^{\cX_t}$ and
adjoint heat propagator $\hat P_{t,s}:\R^{\cX_t}\to\R^{\cX_s}$ by setting
\begin{align*}
  P_{t,s}\bar\psi=\psi(t,\cdot)\;,\quad \hat P_{t,s}\bar\sigma = \sigma(s,\cdot)\;,
\end{align*}
where $\psi$ and $\sigma$ are the solutions given by the previous
theorem with initial/terminal condition $\bar\psi$ and $\bar\sigma$
respectively. We have the following properties of the propagators.

\begin{proposition}\label{prop:propagator}
For any $0\leq s\leq r \leq t\leq T$ we have
\begin{align*}
  P_{t,s}=P_{t,r}\circ P_{r,s}\;,\quad \hat P_{t,s} = \hat P_{r,s}\circ \hat P_{t,r}\;.
\end{align*}
Moreover, for $\bar\psi\in \R^{\cX_s}$, $\bar\sigma\in\R^{\cX_t}$ we have
\begin{align}\label{eq:adjoint}
  \ip{P_{t,s}\bar\psi,\bar\sigma} = \ip{\bar\psi,\hat P_{t,s}\bar\sigma}\;.
\end{align}
We have the maximum principle, i.e~we have 
\begin{align*}
  \min_{y\in\cX_s}\bar\psi(y) \leq P_{t,s}\bar\psi(x) \leq \max_{y\in\cX_s} \bar\psi(x) \quad\forall x\in\cX_t\;,\\
  \min_{y\in\cX_t}\bar\sigma(y) \leq \hat P_{t,s}\bar\sigma(x) \leq \max_{y\in\cX_t} \bar\sigma(x) \quad\forall x\in\cX_s\;.
\end{align*}
Moreover, $P_{t,s}\bar\psi$ and $\hat P_{t,s}\bar\sigma$ are strictly
positive provided $\bar\psi$ and $\bar\sigma$ are non-negative and not
identically $0$. Finally, we have that
\begin{align*}
  \sum_{x\in\cX_s} \hat P_{t,s}\bar\sigma(x) = \sum_{x\in\cX_t} \bar\sigma(x)\;.
\end{align*}
In particular, for $\bar\mu\in\cP(\cX_t)$ we have
$\hat P_{t,s}\mu\in\cP(\cX_s)$.
\end{proposition}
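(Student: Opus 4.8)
My plan is to obtain all four groups of assertions from Theorem~\ref{thm:heat-eq} together with the fixed-space results of Section~\ref{sec:heat-simple}; the only place where the changing state space enters is the passage through the singular times $t_i$, and that is where I expect the (modest) real work to be.

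\emph{Semigroup identity.} This is a pure uniqueness statement. Given $\bar\psi\in\R^{\cX_s}$, let $\psi$ be the solution on $\cS_{s,T}$ from Theorem~\ref{thm:heat-eq}. For any $r\in[s,t]$ the restriction of $\psi$ to $\cS_{r,T}$ again solves the heat equation away from the singular times and satisfies the matching relations \eqref{eq:heat-eq-cont} at the singular times $t_i\in(r,T)$, with initial datum $\psi(r,\cdot)=P_{r,s}\bar\psi$; so by the uniqueness part of Theorem~\ref{thm:heat-eq} it is the solution associated to $P_{r,s}\bar\psi$, and evaluating at $t$ gives $P_{t,s}=P_{t,r}\circ P_{r,s}$. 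Restricting the adjoint solution to $\cS_{0,r}$ yields $\hat P_{t,s}=\hat P_{r,s}\circ\hat P_{t,r}$ in exactly the same way.

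\emph{Adjointness.} Fix $\bar\psi\in\R^{\cX_s}$, $\bar\sigma\in\R^{\cX_t}$ and set $\psi_r:=P_{r,s}\bar\psi$, $\sigma_r:=\hat P_{t,r}\bar\sigma$ for $r\in[s,t]$. On each open interval $I_i$ both are differentiable with $\partial_r\psi_r=\Delta_r\psi_r$ and $\partial_r\sigma_r=-\hat\Delta_r\sigma_r$, so using the duality $\ip{\Delta_r\psi,\sigma}=\ip{\psi,\hat\Delta_r\sigma}$ recorded in Section~\ref{sec:topo} we get $\partial_r\ip{\psi_r,\sigma_r}=\ip{\Delta_r\psi_r,\sigma_r}-\ip{\psi_r,\hat\Delta_r\sigma_r}=0$; hence $r\mapsto\ip{\psi_r,\sigma_r}$ is constant on each $I_i$. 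The point that has to be checked is continuity at each $t_i$: by \eqref{eq:heat-eq-cont} the numbers $\psi_r(y)$, for $y$ ranging over a collapsing (resp.\ spawned) group above $z\in\bar\cX_i$, all converge to the common value $\psi_{t_i}(z)$, while by \eqref{eq:heat-eq-adj-cont} the numbers $\sigma_r(y)$ over the same group converge to nonnegative values summing to $\sigma_{t_i}(z)$; grouping the sum $\ip{\psi_r,\sigma_r}=\sum_{z}\sum_{y}\psi_r(y)\sigma_r(y)$ according to these groups shows that it tends to $\sum_z\psi_{t_i}(z)\sigma_{t_i}(z)=\ip{\psi_{t_i},\sigma_{t_i}}$ as $r\to t_i$ from either side. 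Thus $\ip{\psi_r,\sigma_r}$ is constant on all of $[s,t]$, and comparing $r=s$ with $r=t$ gives \eqref{eq:adjoint}.

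\emph{Maximum principle, conservation of mass, positivity.} For $P_{t,s}$ I would iterate over the intervals. On a compact subinterval of $I_i$ the solution solves a time-inhomogeneous heat equation on the fixed graph $\cX_i$ with log-Lipschitz rates, so Lemma~\ref{lem:max-principle} confines it between the minimum and maximum of its values at the left endpoint; letting the subinterval exhaust $I_i$ and invoking \eqref{eq:heat-eq-cont} at both ends shows that $\psi(r,\cdot)$, $r\in[t_i,t_{i+1}]$, stays inside the interval spanned by $\psi(t_i,\cdot)$, and a further use of \eqref{eq:heat-eq-cont} at $t_{i+1}$ shows this spanned interval does not grow in $i$. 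Iterating from the interval containing $s$ gives the two-sided bound for $P_{t,s}\bar\psi$. Since the constant function $\one$ solves the heat equation and respects \eqref{eq:heat-eq-cont}, uniqueness gives $P_{t,s}\one=\one$, and feeding $\bar\psi=\one$ into \eqref{eq:adjoint} yields $\sum_{x\in\cX_s}\hat P_{t,s}\bar\sigma(x)=\ip{\one,\hat P_{t,s}\bar\sigma}=\ip{P_{t,s}\one,\bar\sigma}=\sum_{x\in\cX_t}\bar\sigma(x)$; combined with $\hat P_{t,s}\bar\sigma\ge0$ for $\bar\sigma\ge0$ (which is part of Theorem~\ref{thm:heat-eq}) this gives $\hat P_{t,s}\,\cP(\cX_t)\subseteq\cP(\cX_s)$. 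Finally the bounds and the strict positivity for $\hat P_{t,s}$ follow from those for $P_{t,s}$ via $\hat P_{t,s}\bar\sigma(x)=\ip{\delta_x,\hat P_{t,s}\bar\sigma}=\ip{P_{t,s}\delta_x,\bar\sigma}$, exactly as in the proof of Lemma~\ref{lem:max-principle}; the strict positivity of $P_{t,s}\delta_x$ for $s<t$ needed here is itself obtained by iterating the irreducibility-based positivity of Lemma~\ref{lem:max-principle} over the $I_i$ and noting that the strictly positive total mass is preserved at each singular time, so the function handed to the next interval is again nonnegative and not identically zero.

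\emph{Main obstacle.} Apart from this bookkeeping everything is routine ODE theory or a direct rerun of Lemma~\ref{lem:max-principle}; the one thing that genuinely has to be checked is the continuity at the singular times used in the adjointness argument and, implicitly, in the telescoping for the maximum principle. It works because the two matching conditions in Theorem~\ref{thm:heat-eq} — equal pointwise limits on a collapsing/spawned group for $\psi$, aggregation of the limiting masses for $\sigma$ — are precisely dual to one another, so the Euclidean pairing $\ip{\psi_r,\sigma_r}$ is unaffected by the change of state space in the limit $r\to t_i$.
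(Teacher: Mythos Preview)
Your proposal is correct and follows exactly the approach the paper itself takes: its proof is a one-liner deferring to the interval-wise results of Section~\ref{sec:heat-simple} (Lemma~\ref{lem:max-principle} and the duality \eqref{eq:adjoint-simple}) together with the boundary conditions \eqref{eq:heat-eq-cont}, \eqref{eq:heat-eq-adj-cont}, and you have spelled out those details faithfully. One inconsequential slip: in the adjointness paragraph you call the limits of $\sigma_r(y)$ ``nonnegative'', but $\bar\sigma$ is a general element of $\R^{\cX_t}$; your argument only uses that the individual limits exist and sum to $\sigma_{t_i}(z)$, so nothing is affected.
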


\begin{proof}
  These properties follow immediately from the corresponding
  properties during each interval $I_i$ established
  Section\ref{sec:heat-simple}, in particular Lemma
  \ref{lem:max-principle}, together with the boundary conditions
  \eqref{eq:heat-eq-cont}, \eqref{eq:heat-eq-adj-cont}.
\end{proof}

The asymptotics of solutions at singular times can be described
in more detail.
\begin{proposition}\label{prop:asymptotics}
  We have that
  \begin{align}\label{eq:asymptotic1}
    P_{t_{i+1},s}\bar\psi (z) &= \sum_{x\in c_i^{-1}(z)} \bar\psi(x)\bar\pi_i^{c,z}(x) + O(|t_{i+1}-s|)\;,\\\label{eq:asymptotic2}
  P_{t,t_i}\bar\psi(x) &= \bar\psi (z) + O(|t-t_i|)\;,\quad x\in s_i^{-1}(z)\;.
  \end{align}
Similarly, for the adjoint equation, we have
\begin{align}\label{eq:asymptotic3}
    \hat P_{t_{i+1},s}\bar\sigma (x) &= \bar\sigma(z)\bar\pi_i^{c,z}(x) + O(|t_{i+1}-s|)\;,\quad x\in c_i^{-1}(z)\;,\\\label{eq:asymptotic4}
\hat P_{t,t_i}\bar\sigma(z) &= \sum_{x\in s_i^{-1}(z)}\bar\sigma(x) + O(|t-t_i|)\;.
\end{align}
Moreover, we have for all $x,y\in c_i^{-1}(z)$ and $t\in(t_i,t_{i+1})$
\begin{align}\label{eq:asymptotic6}
  \abs{P_{t,s}\psi(x)-P_{t,s}\psi(y)}\leq C \exp\Big(-\int_{s\wedge{t_i}}^t Q^z_*(r)\dd r\Big)\;,
\end{align}
for a suitable constant $C$ depending on $\psi$ and $\pi^z_*$, where
$Q_*^z(r)=\min\{ Q_r(x,y):x,y\in c_i^{-1}(z),Q^c_i(x,y)=\infty\}$ and
$\pi_*^z=\inf\{\pi_r(x):x\in c_i^{-1}(z), r\in (s\wedge
t_{i},t_{i+1})\}>0$.
An analogous estimate holds for the density
$(\hat P_{t,s}\sigma)/\pi_s$ as $s\downarrow t_i$ .

Finally, we have
\begin{align}\label{eq:asymptotic5}
\lim_{s\downarrow t_i}\hat P_{t,s}\bar\sigma (x) &= \hat P_{t,t_i}\bar\sigma(z)\bar\pi_i^{s,z}(x)\;,\quad x\in s_i^{-1}(z)\;.
  \end{align}
\end{proposition}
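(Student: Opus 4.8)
The plan is to single out the equilibration estimate \eqref{eq:asymptotic6} as the substantive step and to read off the remaining identities from it using short–time comparison, reversibility, and the duality \eqref{eq:adjoint}. Fix a collapsing group $G:=c_i^{-1}(z)\subset\cX_i$. Two structural facts will be used repeatedly. First, by Definition \ref{def:sing-triple}(4) an edge joining $G$ to $\cX_i\setminus G$ cannot have $Q^c_i=+\infty$, so its rate stays bounded as $t\uparrow t_{i+1}$, and the log-Lipschitz assumption gives an analogous bound near $t_i$ for edges not carrying a spawning explosion. Second, all propagated functions and measures are bounded by their data, by the maximum principle of Proposition \ref{prop:propagator}.

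\emph{Short-time estimates from singular times.} For \eqref{eq:asymptotic2} set $\eta_t(x):=P_{t,t_i}\bar\psi(x)-\bar\psi(s_i(x))$; then $\eta_t\to 0$ as $t\downarrow t_i$ by \eqref{eq:heat-eq-cont}, and $\partial_t\eta_t=\Delta_t\eta_t+g_t$ with $g_t=\Delta_t(\bar\psi\circ s_i)$. The crucial point is that $g_t$ is bounded on a right neighbourhood of $t_i$: whenever $Q_t(x,y)$ explodes as $t\downarrow t_i$ one has $s_i(x)=s_i(y)$, so the term $[\bar\psi(s_i(y))-\bar\psi(s_i(x))]Q_t(x,y)$ in $g_t(x)$ vanishes, and all remaining rates are bounded. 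Duhamel's formula and the maximum principle then give $\|\eta_t\|_\infty\le\int_{t_i}^t\|g_r\|_\infty\,\dd r=O(|t-t_i|)$. For \eqref{eq:asymptotic4}, detailed balance makes the flux of $\hat P_{t,\cdot}\bar\sigma$ within the interior of a spawning group $s_i^{-1}(z)$ cancel, so $\frac{\dd}{\dd r}\sum_{x\in s_i^{-1}(z)}\hat P_{t,r}\bar\sigma(x)$ equals the (bounded) net flux through the boundary of the group; hence this sum differs from $\sum_{x\in s_i^{-1}(z)}\bar\sigma(x)$ by $O(|t-r|)$, and \eqref{eq:heat-eq-adj-cont} yields \eqref{eq:asymptotic4}.

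\emph{Equilibration and its consequences.} To prove \eqref{eq:asymptotic6} I would decompose, for $x\in G$, the generator $\Delta_r$ as $L^G_r+R^G_r$, where $L^G_r$ is the conservative generator of the Markov chain on $G$ with rates $(Q_r(x,y))_{x,y\in G}$ --- reversible w.r.t.~$\pi_r|_G$ and, since $G$ is connected through edges with $Q^c_i=+\infty$, with spectral gap bounded below by $c\,Q^z_*(r)$ for a constant $c>0$ depending only on $|G|$ and $\pi^z_*$ --- while $R^G_r$ collects the bounded exterior contributions. Writing $\psi_r:=P_{r,s}\psi$ and $\bar\psi_r:=\pi_r(G)^{-1}\sum_{x\in G}\psi_r(x)\pi_r(x)$, testing the heat equation against $\psi_r-\bar\psi_r$ leads, along the lines of the two-point computation in Remark \ref{rem:explosion}, to a Gronwall inequality for $v_r:=\sum_{x\in G}(\psi_r(x)-\bar\psi_r)^2\pi_r(x)$ of the form $\frac{\dd}{\dd r}v_r\le -c\,Q^z_*(r)\,v_r+C$ (the term $R^G_r$ and the derivative of $\pi_r$ providing the bounded inhomogeneity); integrating, and using \eqref{eq:rate-int-infty} to the effect that $\int^{t_{i+1}}Q^z_*=+\infty$, gives \eqref{eq:asymptotic6}. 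The identical computation for the density $(\hat P_{t,r}\sigma)/\pi_r$, which solves $\partial_r\rho=-\Delta_r\rho-\dot p_r\rho$ on each $I_i$ with $p_r=\log\pi_r$, yields the stated analogue for the adjoint flow. Now \eqref{eq:asymptotic1} follows: the weighted average $m_r:=\pi_r(G)^{-1}\sum_{x\in G}P_{r,s}\bar\psi(x)\pi_r(x)$ has, again by detailed balance, vanishing interior contribution to $\frac{\dd}{\dd r}m_r$, so $m_r=m_s+O(|t_{i+1}-s|)$; since $\pi_r|_G/\pi_r(G)\to\bar\pi_i^{c,z}$ as $r\uparrow t_{i+1}$ by Definition \ref{def:sing-triple}(2) and \eqref{eq:local-eq}, and since \eqref{eq:asymptotic6} together with \eqref{eq:heat-eq-cont} shows all $P_{r,s}\bar\psi(x)$, $x\in G$, converge to the common value $P_{t_{i+1},s}\bar\psi(z)$, letting $r\uparrow t_{i+1}$ gives $P_{t_{i+1},s}\bar\psi(z)=\sum_{x\in G}\bar\psi(x)\bar\pi_i^{c,z}(x)+O(|t_{i+1}-s|)$. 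Inserting \eqref{eq:asymptotic1} into the duality \eqref{eq:adjoint} and testing against $\bar\psi=\delta_x$ gives \eqref{eq:asymptotic3} at once. Finally, \eqref{eq:asymptotic5} follows by combining the density analogue of \eqref{eq:asymptotic6} near $t_i$ --- which forces $(\hat P_{t,r}\bar\sigma)/\pi_r$ to become constant on each spawning group $s_i^{-1}(z)$ as $r\downarrow t_i$ --- with the identity $\lim_{r\downarrow t_i}\sum_{x\in s_i^{-1}(z)}\hat P_{t,r}\bar\sigma(x)=\hat P_{t,t_i}\bar\sigma(z)$ from \eqref{eq:heat-eq-adj-cont}, which fixes the normalising constant and produces the weight $\bar\pi_i^{s,z}$.

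\emph{Main obstacle.} The genuine difficulty is \eqref{eq:asymptotic6}: one must turn the heuristic ``the heat flow equilibrates on a collapsing group before the singular time'' into a quantitative estimate, which requires carefully separating the exploding interior rates (driving the contraction) from the bounded exterior ones, and a lower bound of order $Q^z_*(r)$, uniform in $r$, for the spectral gap of the interior chain. Everything else is bookkeeping with detailed balance, Duhamel's formula, the maximum principle, and the duality relation.
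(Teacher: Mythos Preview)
Your proposal is correct and follows essentially the same route as the paper: the variance--Gronwall argument on a collapsing group for \eqref{eq:asymptotic6}, the Lipschitz control of the $\pi$-weighted mean $m_r$ for \eqref{eq:asymptotic1} and \eqref{eq:asymptotic4}, and duality for the remaining identities. Two minor deviations are worth noting: the paper obtains \eqref{eq:asymptotic2} from \eqref{eq:asymptotic4} via the adjointness \eqref{eq:adjoint} rather than by your direct Duhamel argument, and it bounds the Dirichlet form on $G$ from below by $2Q_*^z(r)\,v_r$ by a direct expansion rather than via a spectral-gap argument---your generic spectral-gap bound would yield only $c\,Q_*^z(r)$ with a constant $c$ depending on $|G|$ and $\pi^z_*$, hence a weaker exponent in \eqref{eq:asymptotic6} than the stated one, though this does not affect any subsequent use in the paper.
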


The proof of Proposition \ref{prop:asymptotics} will follow alongside
the one of Theorem \ref{thm:heat-eq}.

\begin{proof}[Proof of Theorem \ref{thm:heat-eq}]
  It suffices to consider the case of a single interval
  $0=t_0<t_1=T$. The general case with multiple intervals and singular
  times then follows immediately by concatenating solutions on
  different intervals. For simplicity, we write $s=s_0, c=c_0$.

  {\bf Step 1:} Recall that for $t_0<s<t_1$ and
  $\tilde \psi\in \R^{\cX_0}$ there exists a unique solution
  $\psi=P_{\cdot,s}\tilde\psi$ to the heat equation on
  $[s,t_1)\times \cX_0$ with $\psi(s,\cdot)=\tilde\psi$. We will show
  that for all $z\in\bar\cX_1$ and $x\in c^{-1}(z)$ the limit
  $\psi^{c,z}$ of $\psi(t,x)$ as $t\uparrow t_1$ exists and is
  independent of $x$.  This will allow to define the propagator
  $P_{s,t_1}:\R^{\cX_0}\to\R^{\bar\cX_1}$ by setting
  $P_{s,t_1}\tilde\psi(z)=\psi^{c,z}$. Obviously, this way,
  $P_{s,t_1}$ will still be linear and satisfy the propagator identity
  and the maximum principle.

By the maximum principle, Lemma \ref{lem:max-principle}, $\psi$ is
uniformly bounded on $[s,t_1)\times\cX_0$. Assume first that
$c^{-1}(z)=\{x\}$ is a singleton. Then $Q_t(x,y)$ is uniformly bounded
on $[s,t_1)$ and has a limit as $t\uparrow t_1$ for all
$y\in\cX_0$. From the heat equation
\begin{align*}
 \partial_t\psi(t,x)=\sum_y\big[\psi(t,y)-\psi(t,x)\big]Q_t(x,y)
\end{align*}
we thus infer that $t\mapsto\psi(t,x)$ is Lipschitz on $[s,t_1)$ and
thus has a limit as $t\uparrow t_1$.

Assume now that $c^{-1}(z)$ is not a singleton and
put 
\begin{align*}
m(t):=\sum_{x\in c^{-1}(z)}\psi(t,x)\pi_t(x)\;,\quad 
v(t):= \sum_{x\in c^{-1}(z)}\big|\psi(t,x)-m(t)\big|^2\pi_t(x)\;.  
\end{align*}
We calculate
\begin{align*}
  \frac{\dd}{\dd t}m(t)
   &=
  \sum_{x\in c^{-1}(z)}\Delta_t\psi(t,x)\pi_t(x) + \psi(t,x)\dot\pi_t(x)\\
&=
  \sum_{x,y\in c^{-1}(z)}\big[\psi(t,y)-\psi(t,x)\big]Q_t(x,y)\pi_t(x)\\
  &\quad+ \sum_{x\in c^{-1}(z), y\notin c^{-1}(z)}\big[\psi(t,y)-\psi(t,x)\big]Q_t(x,y)\pi_t(x)\\
&\quad + \sum_{x\in c^{-1}(z)}\psi(t,x)\dot\pi_t(x)\;.
\end{align*}
The first sum vanishes by the detailed balance condition. In the
second sum $Q_t(x,y)$ remains bounded as $t\uparrow t_1$. Together
with the maximum principle and the assumption that $\pi_t$ is
Lipschitz we infer that $t\mapsto m(t)$ is Lipschitz and the limit 
$m(t_{1}):=\lim_{t\uparrow t_1}m(t)$ exists.

Similarly, using the detailed balance condition we calculate
\begin{align*}
  \frac{\dd}{\dd t} v(t)
 &=
\sum_{x\in c^{-1}(z)}2\big[\psi(t,x)-m(t)\big]\big[\Delta_t\psi_t(x)-\dot m(t)\big]\pi_t(x) +\big|\psi(t,x)-m(t)\big|^2\dot\pi_t(x) \\
&=  
\sum_{x,y\in c^{-1}(z)}-\big[\psi(t,y)-\psi(t,x)\big]^2Q_t(x,y)\pi_t(x)\\
  &\quad+ 2\sum_{x\in c^{-1}(z), y\notin c^{-1}(z)}\big[\psi(t,x)-m(t)\big]\big[\psi(t,y)-\psi(t,x)\big]Q_t(x,y)\pi_t(x)\\
&\quad -2 \sum_{x\in c^{-1}(z)}\big[\psi(t,x)-m(t)\big]\dot m(t)\pi_t(x)+\big|\psi(t,x)-m(t)\big|^2\dot\pi_t(x)\;.
\end{align*}
As before the terms in the last two lines are uniformly bounded by
some constant $C$ as $t\uparrow t_1$. On the other hand, one readily
checks by expanding the square that
\begin{align*}
  \sum_{x,y\in c^{-1}(z)}\big[\psi(t,y)-\psi(t,x)\big]^2Q_t(x,y)\pi_t(x) \geq 2 Q_*(t) v(t)\;,
\end{align*}
where $Q_*(t)$ is maximal such that $Q_t(x,y)\geq Q_*(t)$ for all
$x,y\in c^{-1}(z)$ with $Q^c_0(x,y)=\infty$. Note that since
$\pi^z_*=\inf\{\pi_r(x):x\in c^{-1}(z), r\in(t_0,t_1)\}>0$ by
assumption, we have that $\int^{t_{1}}Q_*(t)\dd t=+\infty$. Thus, we
have $\dot v(t) \leq -2Q_*(t)v(t) + C$ and Gronwall's lemma implies
that
\begin{align*}
  v(t) \leq \big(v(s)+ C(t-s)\big)\exp\Big(-2\int_s^{t}Q_*(r)\dd r\Big)\to 0\quad \text{as }t\uparrow t_{1}\;.
\end{align*}
We conclude that $\psi(t,x)$ converges to $m(t_1)$ for all
$x\in c^{-1}(z)$ as $t\uparrow t_1$. In particular, using that
$\abs{\psi(t,y)-\psi(t,x)}^2\leq 4v(t)/\pi_*^z$, we have established
\eqref{eq:asymptotic6}.

Finally, let us show in addition that for $\tilde\psi=\delta_x$ for $x\in c^{-1}(z)$ for some $z\in \bar\cX_1$ we have
\begin{align}\label{eq:psi-coll-delta}
  P_{t_1,t}\tilde\psi = \bar\pi^{c,z}(x)\delta_z + O(|t_1-t|)\;.
\end{align}
and thus also \eqref{eq:asymptotic1} by
linearity.

Indeed, for this $\tilde\psi$ we have $m(t)=\pi_t(x)$. Since $m$ and $\pi$ are Lipschitz we have
\begin{align*}
  P_{t_1,t}\tilde\psi(z)&=\lim_{r\uparrow t_1}\sum_{x\in c^{-1}(z)}\bar\pi^{c,z}(x)\psi(r,x)
= \lim_{r\uparrow t_1}\pi_r\big(c^{-1}(z)\big)^{-1}m(r)\\
&= \pi_t\big(c^{-1}(z)\big)^{-1}m(t) +O(|t_1-t|)
= \bar\pi^{c,z}(x)  +O(|t_1-t|)\;.
\end{align*}
Arguing similarly, we show that for $z'\neq z$ we have $P_{t_1,t}\tilde\psi(z')=0 +O(|t_1-t|)$.

\smallskip

{\bf Step 2:} Now, we fix $t_0<t<t_1$ and
$\tilde\sigma\in \cP(\cX_0)$. Recall that there exist a unique
solution $\sigma=\hat P_{t,\cdot}\tilde \sigma$ to the adjoint heat
equation on $(t_0,t]\times\cX_0$ with
$\sigma(t,\cdot)=\tilde\sigma$. We will show that for all
$z\in\bar\cX_0$ and $x\in s^{-1}(z)$ the limit
$\sigma^s(x):=\lim_{s\downarrow t_0}\sigma(s,x)$ exists in $(0,1)$ and
that we have
\begin{align}\label{eq:mu-spawn-eq}
  \sigma^s(x) = \bar\pi^{s,z}(x)\sigma^{s,z}\;, \quad \sigma^{s,z}:=\sum_{x\in s^{-1}(z)}\sigma^s(x)\;.
\end{align}
This will allow to define the propagator
$\hat P_{t,t_0}:\R^{\cX_0}\to\R^{\bar\cX_0}$ by setting
$\hat P_{t,t_0}\tilde\sigma(z)=\sigma^{s,z}$. Obviously, this way,
$\hat P_{t,t_0}$ will still be linear and satisfy the propagator
identity and the maximum principle. Moreover, we obtain
\eqref{eq:asymptotic5}.
 
If $s^{-1}(z)=\{x\}$ is a singleton, we infer similarly as in the
first step, that $s\mapsto\sigma(s,x)$ is Lipschitz on $(t_0,t]$ and
thus the limit $\sigma^s(x)$ exists.

Assume that $s^{-1}(z)$ is not a singleton. Note that the density
$\rho(s,x):=\sigma(s,x)/\pi_s(x)$ satisfies the adjoint heat equation
\begin{align*}
  \partial_s\rho(s,x) = -\Delta_s\rho(s,x) - \dot p_s(x)\rho(s,x)\;.
\end{align*}
By the assumptions the second term remains bounded as
$s\downarrow t_0$. Reversing time we can thus argue as in the first
step to see that $\rho(s,x)$ converges to a constant $\bar\rho$ as
$s\downarrow t_0$ independent of $x$. Since $\pi_s$ has a limit
$\pi^s(x)$ we infer that
$\lim_{s\downarrow t_0}\sigma(s,x)=\bar\rho\pi^s(x)$, which immediately implies \eqref{eq:mu-spawn-eq}.

Finally, let us show in addition that for $\tilde\sigma=\delta_x$ for some $x\in s^{-1}(z)$, $z\in\bar\cX_0$ we have 
\begin{align}\label{eq:mu-spawn-delta}
  \hat P_{t,t_0}\delta_x = \delta_z + O(|t-t_0|)\;,
\end{align}
and thus \eqref{eq:asymptotic4} by linearity.

Let us put
\begin{align*}
  m(s):= \sum_{x\in s^{-1}(z)}\sigma(s,x)\;.
\end{align*}
We have $m(t)=1$ and $\lim_{s\downarrow t_0}m(s)= \hat P_{t,t_0}\tilde\sigma(z)$. We calculate
\begin{align*}
  \frac{\dd}{\dd s} m(s) 
  &=
   \sum_{x\in s^{-1}(z)}-\hat\Delta_s\sigma(s,x)
  = 
   \sum_{x\in s^{-1}(z),~y\in\cX_0,~y\neq x}-\sigma(s,y)Q_s(y,x) + \sigma(s,x)Q_s(x,y)\\
  &=  
\sum_{x,y\in s^{-1}(z),~y\neq x}-\sigma(s,y)Q_s(y,x) + \sigma(s,x)Q_s(x,y)\\
 &\quad + \sum_{x\in s^{-1}(z),~y\notin s^{-1}(z)}-\sigma(s,y)Q_s(y,x) + \sigma(s,x)Q_s(x,y)\;.
\end{align*}
The first sum in the right hand side vanishes by symmetry. In the
second sum $Q_s(x,y)$ remains bounded as $s\downarrow t_0$. Thus
$s\mapsto m(s)$ is Lipschitz which yields \eqref{eq:mu-spawn-delta}.

\smallskip

{\bf Step 3:} We show that given $\bar\psi\in\bar\cX_0$ there exist a
unique solution $\psi$ on $(t_0,t_1)\times\cX_0$ such that
\begin{align*}
  \bar\psi(z)&=\lim_{t\downarrow t_0}\psi(t,x)\quad\forall z\in\bar\cX_0,~x\in s^{-1}(z)\;.
\end{align*}
This will allow to define the propagator $P_{t,t_0}$ for all $t\in [t_0,t_1]$.

To show uniqueness let $\psi$ be any such solution. Then for any
$t_0<s<t<t_1$ and $\tilde\sigma\in\R^{\cX_0}$ we have
\begin{align*}
  \ip{\psi(t,\cdot),\tilde\sigma} = \ip{\psi(s,\cdot),\hat P_{t,s}\tilde\sigma} \overset{s\downarrow t_0}\longrightarrow \sum_{z\in\bar\cX_0} \bar\psi(z)\hat P_{t,t_0}\tilde\sigma\;,
\end{align*}
using the assumption on $\psi$ and the convergence of the solution to
the adjoint equation from step 2. Thus the solution $\psi$ is uniquely
determined. To show existence, we define $\psi(t,\cdot)$ via
$\ip{\psi(t,\cdot),\tilde\sigma}=\ip{\bar\psi,\hat
  P_{t,t_0}\tilde\sigma}$
for $\tilde \sigma\in \R^{\cX_0}$. Using \eqref{eq:mu-spawn-delta} we
see that $\psi$ has the correct limit as $t\downarrow t_0$. It remains
to verify that it is a solution. To this end it suffices to show that
extending \eqref{eq:other-index} for $t_0<t<t_1$ we have 
 \begin{align}\label{eq:other-index2}
   \partial_t\hat P_{t,t_0}\tilde\sigma=\hat P_{t,t_0}\hat\Delta_t\tilde\sigma\;. 
 \end{align}
Indeed, from this we obtain immediately
\begin{align*}
  \frac{\dd}{\dd t}\ip{\psi(t,\cdot),\tilde\sigma}
= \ip{\bar\psi,\hat P_{t,t_0}\hat\Delta_t\tilde\sigma}
= \ip{\psi(t,\cdot),\hat\Delta_t\tilde\sigma}
 = \ip{\Delta_t\psi(t,\cdot),\tilde\sigma}\;.
\end{align*}
Let us show \eqref{eq:other-index2}. For $t_0<s<t$ we obtain integrating \eqref{eq:other-index}
\begin{align*}
  \hat P_{t+h,s}\tilde\sigma -\hat P_{t,s}\tilde\sigma = \int_t^{t+h}\hat P_{r,s}\hat\Delta_r\tilde \sigma \dd r\;.
\end{align*}
Noting that the rates $Q_r$ are bounded for $r\in[t,t+h]$ and thanks
to the maximum principle we can thus first pass to the limit
$s\downarrow t_0$ by dominated convergence. Again thanks to the
maximum principle, linearity, and the continuity assumption on the
rates, the map $r\mapsto \hat P_{r,t_0}\hat\Delta_r\tilde\sigma(x)$ is
continuous. Thus we can divide by $h$ and let $h\downarrow 0$ to
obtain the claim (arguing similarly for the left derivative).

\smallskip

{\bf Step 4:} 
Similarly, we show that given $\bar\sigma\in\bar\cX_1$ there exist a unique solution $\sigma$ on $(t_0,t_1)\times\cX_0$ such that 
\begin{align*}
  \bar\sigma(z)&=\sum_{x\in c^{-1}(z)}\lim_{s\uparrow t_1}\sigma(s,x)\quad\forall z\in\bar\cX_1\;.
\end{align*}
This will allow to define the propagator $\hat P_{t_1,s}$ for all $s\in [t_0,t_1]$.

To show uniqueness let $\sigma$ be any such solution. Then for any
$t_0<s<t<t_1$ and $\tilde\psi\in\R^{\cX_0}$ we have
\begin{align*}
  \ip{\tilde\psi,\sigma(s,\cdot)} = \ip{P_{t,s}\tilde\psi,\sigma(t,\cdot)} \overset{t\uparrow t_1}\longrightarrow \sum_{z\in\bar\cX_1} P_{t_1,s}\tilde\psi(z)\bar\sigma(z)\;,
\end{align*}
using the assumption on $\sigma$ and the convergence of the solution
to the heat equation from step 1. Thus the solution $\mu$ is uniquely
determined. To show existence we define $\sigma(s,\cdot)$ via
$\ip{\tilde\psi(s,\cdot),\sigma(s,\cdot)}=\ip{P_{t_1,s}\tilde\psi,\bar\mu}$
for $\tilde \psi\in \R^{\cX_0}$. Using \eqref{eq:psi-coll-delta} shows
that this solution has the correct limit as $s\uparrow t_1$.
Similarly as before one can show that this is a solution to the heat
equation by showing that
$\partial_s P_{t_1,s}\tilde\psi=-P_{t_1,s}\Delta_s\tilde\psi$
extending \eqref{eq:other-index}.

Note that the propagators $P_{t,t_0}$ and $\hat P_{t_1,s}$ constructed
in steps 3 and 4 by construction satisfy the adjointness relation
\eqref{eq:adjoint}.

Finally, note that \eqref{eq:asymptotic2} and \eqref{eq:asymptotic3}
follow from \eqref{eq:asymptotic4} and \eqref{eq:asymptotic1} by the
adjointness \eqref{eq:adjoint}.
\end{proof}

\section{Characterizations of super Ricci flows}
\label{sec:RFequiv}
In this section we will give several equivalent characterizations of
discrete super Ricci flows. These will be formulated in terms of a
time-dependent Bochner inequality, gradient estimates for the heat
propagator, transport estimates for the dual heat propagator, and
dynamic convexity of the entropy.

Throughout this section $(\cX_t,Q_t,\pi_t)_{t\in[0,T]}$ will be a
singular time-dependent Markov triple according to Definition
\ref{def:sing-triple}. We additionally make the following assumption
on the growth of the transition rates that go to infinity at singular
times: For each $z\in \bar\cX_{i+1}$ we assume that
 \begin{align}\label{eq:rate-int-infty2}
 Q_{i,\max}^{z,c}(t)\exp\Big(-2\int^tQ^{z,c}_{i,\min}(r)\, dr\Big)\to0 \quad \text{as } t\nearrow t_{i+1}\; ,
 \end{align}
 where we set
 $Q_{i,\max}^{z,c}(t)=\max\{Q_t(x,y) : x,y\in c_i^{-1}(z),
 Q^c_i(x,y)=\infty\}$
 and
 $Q^{z,c}_{i,\min}(t)=\min\{Q_t(x,y): x,y\in c_i^{-1}(z),
 Q^c_i(x,y)=\infty\}$
 are the maximal resp.~minimal diverging rates in a collapsing
 region. Note that $Q^{z,c}_{i,\min}(t)\to\infty$ as
 $t\nearrow t_{i+1}$.

 Moreover for all $z\in\bar \cX_{i}$ we assume that
  \begin{align}\label{eq:rate-int-infty3}
 (t-t_i)^2Q^{z,s}_{i,\max}(t)\to 0\quad \text{as } t\searrow t_i\; ,
 \end{align}
 where $Q^{z,s}_{i,\max}(t)=\max\{Q_t(x,y) : x,y\in s_i^{-1}(z), Q^s_i(x,y)=\infty\}$.

\medskip

 To state the defining properties of super Ricci flows, let us
 introduce or recall the following central objects. We will denote by
 $\Gamma_t$ and $\Gamma_{2,t}$ the integrated carr\'e du champs
 operator associated to the Markov triple $(\cX_t,Q_t,\pi)$,
 c.f.~\eqref{eq:Gamma}, \eqref{eq:Gamma2}, i.e.
 \begin{align*}
  \Gamma_t(\mu,\psi)&:=\ip{\nabla\psi,\nabla\psi\cdot\Lambda_t(\mu)}\;,\\
  \Gamma_{2,t}(\mu,\psi)&:=\frac12\ip{\nabla\psi,\nabla\psi\cdot\hat\Delta_t \Lambda_t(\mu)} -\ip{\nabla\psi,\nabla\Delta_t\psi\cdot\Lambda_t(\mu)}\;,
 \end{align*}
 where we write
 $\Lambda_t(\mu)(x,y)=\Lambda\big(\mu(x)Q_t(x,y),\mu(y)Q_t(y,x)\big)$
 and $\hat\Delta_t\Lambda_t(\mu)$ is defined as in Section
 \ref{sec:BA}. Moreover, we introduce the time-derivative of the
 $\Gamma$-operator given by
\begin{align}\label{eq:gamma-del}
  \partial_t\Gamma_t(\mu,\psi) := \ip{\nabla\psi,\nabla\psi\cdot\partial_t\Lambda_t(\mu)}\;,
\end{align}
where we set
\begin{align*}
\partial_t\Lambda_t(\mu)(x,y)=& \partial_1\Lambda\big(\mu(x)Q_t(x,y),\mu(y)Q_t(y,x)\big)\mu(x)\dot Q_t(x,y)\\
 &+ \partial_2\Lambda\big(\mu(x)Q_t(x,y),\mu(y)Q_t(y,x)\big)\mu(y)\dot Q_t(y,x)\;.
\end{align*}
Note that by the Lipschitz assumption on the transition rates,
$\partial_t\Gamma_t$ is well defined for a.e.~$t\in(0,T)$. Further let
us denote by $\cW_t$ the discrete transport distance associated to
$(\cX_t,Q_t,\pi_t)$. Finally, we denote by $\cH_t$ the relative entropy w.r.t.~$\pi_t$.

With this we have the following result.

\begin{theorem}\label{thm:srf-equiv}
  Let $(\cX_t,Q_t,\pi_t)_{t\in[0,T]}$ be a singular time-dependent
  Markov triple satisfying \eqref{eq:rate-int-infty2} and
  \eqref{eq:rate-int-infty3}. Then the following are equivalent
\begin{enumerate}
\item [(I)] The \emph{Bochner inequality}
\begin{align}\label{eq:Bochner}
\Gamma_{2,t}(\mu,\psi)\geq\frac12\partial_t\Gamma_t(\mu,\psi)
\end{align}
holds for a.e.~$t\in[0,T]$ and all $\mu\in\cP(\cX_t)$, $\psi\in\R^{\cX_t}$.
\item[(II)] The \emph{gradient estimate}
\begin{align}\label{eq:grad-est}
\Gamma_t(\mu,P_{t,s}\psi)\leq\Gamma_s(\hat P_{t,s}\mu,\psi)
 \end{align}
  holds for all $0\leq s\leq t\leq T$ and all $\mu\in\cP(\cX_t)$, $\psi\in\R^{\cX_s}$.
\item[(III)] The \emph{transport estimate}
\begin{align}\label{eq:transport}
\cW_s(\hat P_{t,s}\mu,\hat P_{t,s}\nu)\leq \cW_t(\mu,\nu)
 \end{align}
  holds for all $0\leq s \leq t\leq T$ and all $\mu,\nu\in\cP(\cX_t)$.
\item[(IV)] The \emph{entropy is dynamically convex}, i.e.~for
  a.e.~$t\in[0,T]$ and all $\cW_t$-geodesics $(\mu^a)_{a\in[0,1]}$
\begin{align}\label{eq:dyn-convex}
\partial_a^+\cH_t(\mu^{1-})-\partial_a^-\cH_t(\mu^{0+})\geq-\frac12\partial_t^-\cW_{t-}^2(\mu^0,\mu^1)\;.
\end{align}
\end{enumerate}
\end{theorem}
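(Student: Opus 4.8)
The plan is to establish the cycle of implications (I) $\Rightarrow$ (II) $\Rightarrow$ (III) $\Rightarrow$ (IV) $\Rightarrow$ (I), exploiting at each step the analysis of the heat flow on singular space-times from Section \ref{sec:heat}, and in particular the fact that the relevant ``energy''-type quantities extend continuously across the singular times $t_i$.

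\smallskip

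\emph{Step 1: (I) $\Rightarrow$ (II).} Fix $0\leq s\leq t\leq T$, $\psi\in\R^{\cX_s}$, $\mu\in\cP(\cX_t)$. On each interval $I_i$, consider the interpolating quantity $\Phi(r):=\Gamma_r\big(\hat P_{t,r}\mu,\, P_{r,s}\psi\big)$. Differentiating in $r$ on $I_i$ and using the heat equations $\partial_r P_{r,s}\psi=\Delta_r P_{r,s}\psi$, $\partial_r\hat P_{t,r}\mu=-\hat\Delta_r\hat P_{t,r}\mu$, together with the definitions \eqref{eq:Gamma2}, \eqref{eq:gamma-del}, one computes
\begin{align*}
\frac{\dd}{\dd r}\Phi(r) = 2\Gamma_{2,r}(\hat P_{t,r}\mu,P_{r,s}\psi) - \partial_r\Gamma_r(\hat P_{t,r}\mu,P_{r,s}\psi) \geq 0
\end{align*}
for a.e.\ $r\in I_i$, by the Bochner inequality \eqref{eq:Bochner}. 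Hence $\Phi$ is nondecreasing on each $I_i$; it remains to check that $\Phi$ is continuous at each singular time $t_i$. This is where the asymptotics of Proposition \ref{prop:asymptotics} enter: as $r\nearrow t_{i+1}$, the function $P_{r,s}\psi$ equilibrates on each collapsing group $c_i^{-1}(z)$ by \eqref{eq:asymptotic6}, so $\nabla P_{r,s}\psi$ vanishes on the exploding edges fast enough (here the growth condition \eqref{eq:rate-int-infty2} is exactly what makes $\Lambda_r(\mu)(x,y)\,|\nabla P_{r,s}\psi(x,y)|^2\to 0$ on those edges), while on the remaining edges everything converges; matching with the reduced graph at $t_{i+1}$ via \eqref{eq:limit-Q}, \eqref{eq:limit-pi} gives $\lim_{r\nearrow t_{i+1}}\Phi(r)=\Phi(t_{i+1})$. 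A symmetric argument using \eqref{eq:rate-int-infty3} and \eqref{eq:asymptotic5} handles $r\searrow t_i$. Therefore $\Phi(t)\geq\Phi(s)$, which is precisely \eqref{eq:grad-est}.

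\smallskip

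\emph{Step 2: (II) $\Leftrightarrow$ (III).} Use the duality of Theorem \ref{thm:dual}. For the forward direction, fix a Hamilton--Jacobi subsolution $\varphi\in\HJ_{\cX_s}^1$ for the graph $(\cX_s,Q_s,\pi_s)$ and test against $\hat P_{t,s}\mu,\hat P_{t,s}\nu$; one needs to produce a competitor subsolution for $\cX_t$. Following the standard recipe, set $\tilde\varphi_a := P_{t,s}\varphi_a$ (heat-propagating the ``profile'' back up to the time-$t$ graph); the gradient estimate \eqref{eq:grad-est} together with the contraction property of $P_{t,s}$ in sup-norm shows that $\tilde\varphi$ satisfies the Hamilton--Jacobi inequality \eqref{eq:HJ} for $\cX_t$:
\begin{align*}
\ip{\partial_a\tilde\varphi_a,\mu'} + \tfrac12\norm{\nabla\tilde\varphi_a}^2_{\mu'} \leq 0 \quad\forall\mu'\in\cP(\cX_t),
\end{align*}
using $\norm{\nabla P_{t,s}\varphi_a}^2_{\mu'}=\Gamma_t(\mu',P_{t,s}\varphi_a)\leq\Gamma_s(\hat P_{t,s}\mu',\varphi_a)=\norm{\nabla\varphi_a}^2_{\hat P_{t,s}\mu'}$ and the adjointness \eqref{eq:adjoint} to rewrite $\ip{\partial_a\tilde\varphi_a,\mu'}=\ip{\partial_a\varphi_a,\hat P_{t,s}\mu'}$. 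Then by Theorem \ref{thm:dual},
\begin{align*}
\tfrac12\cW_s^2(\hat P_{t,s}\mu,\hat P_{t,s}\nu) = \sup_\varphi \big(\ip{\varphi_1,\hat P_{t,s}\nu}-\ip{\varphi_0,\hat P_{t,s}\mu}\big) = \sup_\varphi\big(\ip{\tilde\varphi_1,\nu}-\ip{\tilde\varphi_0,\mu}\big) \leq \tfrac12\cW_t^2(\mu,\nu),
\end{align*}
which is \eqref{eq:transport}. For the converse (III) $\Rightarrow$ (II): given $\psi\in\R^{\cX_s}$ and $\mu\in\cP(\cX_t)$, run a short $\cW_t$-geodesic from $\mu$ in the direction $\nabla\psi$ (more precisely use the first-variation formula for $\tfrac12\cW_t^2$ at $\mu$ in the direction $K_\mu\psi$), apply \eqref{eq:transport}, and compare first-order expansions as the perturbation parameter tends to $0$; the quadratic terms give $\Gamma_t(\mu,P_{t,s}\psi)\leq\Gamma_s(\hat P_{t,s}\mu,\psi)$.

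\smallskip

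\emph{Step 3: (II)/(III) $\Rightarrow$ (IV).} This is the technical heart, following the strategy of \cite{sturm2016}. The idea is to show that the dual heat flow $r\mapsto\hat P_{t,r}\mu$ is the gradient flow of $\cH_r$ with respect to $\cW_r$ in the sense of a \emph{dynamic Evolution Variational Inequality}: for $\cW_r$-a.e.\ competitor $\nu$,
\begin{align*}
\tfrac12\ddtr\Big|_{r} \cW_r^2(\hat P_{t,r}\mu,\nu) + \tfrac12\partial_r^-\cW_{r-}^2(\hat P_{t,r}\mu,\nu) \leq \cH_r(\nu) - \cH_r(\hat P_{t,r}\mu).
\end{align*}
One derives this by differentiating $r\mapsto\tfrac12\cW_r^2(\hat P_{t,r}\mu,\nu)$ along the coupled motion: the ``$\nu$-frozen, metric varying'' part is controlled by \eqref{eq:transport} (equivalently the metric-derivative bound it encodes), while the ``$\hat P_{t,r}\mu$ moving in fixed metric $\cW_r$'' part is exactly the EVI for the static gradient flow of $\cH_r$, which holds because the static heat flow is the $\cW_r$-gradient flow of $\cH_r$ (as recalled in Section \ref{sec:def}) — and here one should really use the static EVI$(0)$, valid without any curvature assumption, in its integrated/action form from the duality \eqref{eq:dual-W}. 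Integrating this dynamic EVI along a $\cW_t$-geodesic $(\mu^a)$ at a fixed regular time $t$ (using $\nu=\mu^a$ and letting the flow time go to $0$) and using the standard ``above the chord'' manipulation produces the dynamic convexity \eqref{eq:dyn-convex}. The main obstacles here are (a) the limited regularity — $\cW_r$-geodesics need not be unique or smooth, so derivatives must be handled as one-sided Dini derivatives, which is why \eqref{eq:dyn-convex} is stated with $\partial_a^\pm$ and $\partial_t^-$; and (b) verifying that the action-integral identity behind the duality interacts correctly with the time-dependence of the metric, i.e.\ that differentiating $\cW_r$ in $r$ only produces the term $\partial_r\Lambda_r$, which again reduces to the Lipschitz/log-Lipschitz control in Definition \ref{def:sing-triple}.

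\smallskip

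\emph{Step 4: (IV) $\Rightarrow$ (I).} Fix a regular time $t$, $\mu\in\cP_*(\cX_t)$, $\psi\in\R^{\cX_t}$, and consider the $\cW_t$-geodesic $(\mu^a)$ with $\mu^0=\mu$ and initial velocity $K_\mu\psi=\nabla\cdot(\Lambda_t(\mu)\nabla\psi)$. Since $\Hess\cH_t(\mu)[\nabla\psi]=\Gamma_{2,t}(\mu,\psi)$ (recalled in Section \ref{sec:BA}), a second-order Taylor expansion of $\cH_t$ along this geodesic gives $\partial_a^+\cH_t(\mu^{0+})-\partial_a^-\cH_t(\mu^{0+}) = \Hess\cH_t(\mu)[\nabla\psi]\cdot|{\rm velocity}|^2+o(\cdot)$ for the second difference over a short geodesic, while the right-hand side of \eqref{eq:dyn-convex}, again by the definition \eqref{eq:gamma-del} and the first-variation of $\cW_{t-}^2$, expands to $-\tfrac12\partial_t\Gamma_t(\mu,\psi)\cdot|{\rm velocity}|^2+o(\cdot)$. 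Rearranging the geodesic so that $\mu$ is the midpoint (using geodesics $(\mu^a)_{a\in[-\eps,\eps]}$ and symmetrizing) to cancel the odd-order terms, then dividing by $\eps^2$ and letting $\eps\to0$, yields \eqref{eq:Bochner} at $\mu$; density of $\cP_*(\cX_t)$ in $\cP(\cX_t)$ and continuity of both sides of \eqref{eq:Bochner} in $\mu$ extend it to all of $\cP(\cX_t)$.

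\smallskip

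\emph{Main obstacle.} The single hardest point is Step 3, constructing and exploiting the dynamic EVI characterization of the dual heat flow as a time-dependent gradient flow of the entropy — this is where one must combine the metric contraction (III) with the static EVI in a setting where $\cW_r$ degenerates at $\partial\cP(\cX_r)$ and changes across singular times, and where geodesics lack regularity. By contrast, the continuity-at-singular-times arguments in Step 1, though essential for consistency through collapse/spawning, are governed by the explicit Gronwall estimate \eqref{eq:asymptotic6} together with the growth conditions \eqref{eq:rate-int-infty2}, \eqref{eq:rate-int-infty3} and are comparatively routine.
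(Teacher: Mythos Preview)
Your overall architecture matches the paper's: interpolation for $(I)\Rightarrow(II)$, Hamilton--Jacobi duality for $(II)\Leftrightarrow(III)$, a dynamic EVI for $(II)/(III)\Rightarrow(IV)$, and the Hessian identification for $(IV)\Rightarrow(I)$. Two points deserve correction or sharpening.

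\textbf{Sign error in Step 1.} With $\mu_r=\hat P_{t,r}\mu$, $\psi_r=P_{r,s}\psi$ and $\Phi(r)=\Gamma_r(\mu_r,\psi_r)$, the correct computation (as in the paper) gives
\[
\frac{\dd}{\dd r}\Phi(r)=-2\Gamma_{2,r}(\mu_r,\psi_r)+\partial_r\Gamma_r(\mu_r,\psi_r)\leq 0\;,
\]
so $\Phi$ is \emph{nonincreasing} and one concludes $\Phi(t)\leq\Phi(s)$, which is \eqref{eq:grad-est}. Your formula has the sign of the $\Gamma_{2}$-term flipped, leading you to claim $\Phi(t)\geq\Phi(s)$, the reverse of what you need. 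This is easily repaired but, as written, Step~1 yields the wrong inequality.

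\textbf{Step 3 is where you diverge from the paper, and your sketch has a gap.} The paper does \emph{not} reduce to a ``static EVI$(0)$ valid without any curvature assumption''; in fact EVI$(0)$ for the static heat flow is precisely the condition $\Ric\geq 0$, which is not assumed here. Instead the paper introduces the auxiliary \emph{dynamic} transport cost $\cW_{s,t}$ (defined by duality against HJ subsolutions whose constraint varies affinely in the $a$-parameter between the metrics at times $s$ and $t$) and proves an \emph{action estimate} (Proposition~\ref{prop:action-est}) directly from the gradient estimate~(II). This yields an integrated EVI-type inequality
\[
\cH_s(\mu_s)-\cH_t(\sigma)\leq \tfrac{1}{2(t-s)}\big[\cW_t(\mu_t,\sigma)^2-\cW_{s,t}(\mu_s,\sigma)^2\big]
-(t-s)\int_0^1\ip{\dot p_{\theta(a)},\mu^a_\theta}\,\dd a\;,
\]
which is then combined with the log-Lipschitz comparison $\cW_{s,t}^2\geq e^{-L|t-s|}\cW_s^2$ (Lemma~\ref{lem:log-lip}) and the contraction~(III) applied to the \emph{interior} portion of a $\cW_t$-geodesic. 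Summing the two endpoint EVIs (weighted by $1/a$) and the interior contraction (weighted by $1/(1-2a)$), letting $s\nearrow t$ and then $a\searrow 0$, gives~\eqref{eq:dyn-convex}. Your proposed splitting ``$\nu$-frozen metric varying'' plus ``static EVI in fixed metric'' is not how the argument runs, and the static EVI ingredient you invoke is not available without a curvature hypothesis; the dynamic distance $\cW_{s,t}$ and the action estimate are the missing pieces.

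Minor remark on Step~2, $(III)\Rightarrow(II)$: since $\psi\in\R^{\cX_s}$, the curve at level $t$ must be taken in the direction $\nabla P_{t,s}\psi$, not $\nabla\psi$; the paper makes this explicit by setting $\psi_r=P_{r,s}\psi$ and using Lemma~\ref{lem:asymptotic-W} together with the Legendre-type inequality~\eqref{eq:Lagrange}.
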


\begin{definition}\label{def:srf}
  A time-dependent Markov triple $(\cX_t,Q_t,\pi_t)_t$ is called a
  \emph{super Ricci flow} if any of the equivalent properties of the
  previous theorem holds.
\end{definition}

The proof of Theorem \ref{thm:srf-equiv} will be given in the
following subsections. We will show the following implications:
$(I)\Leftrightarrow(II)$, $(II)\Leftrightarrow(III)$,
$(IV)\Rightarrow(I)$, $(II)$ implies the dynamic EVI$^-$ property of
the heat flow, which together with $(III)$ implies $(IV)$.

\subsection{Bochner formula and gradient estimates}
In this section we prove the implication $(I)\Leftrightarrow(II)$.

\medskip

\begin{proof}[Proof of $(I)\Rightarrow(II)$] ~\\
{\bf Step 1:} We will first show that \eqref{eq:grad-est} holds for $t_i<s\leq t<t_{i+1}$.
  To this end, fix $\mu\in\cP(\cX_t)$ and $\psi\in\R^{\cX_s}$ and for $s\leq r\leq t$ set $\mu_r=\hat P_{t,r}\mu$ and
  $\psi_r=P_{r,s}\psi$. Then we have
  \begin{align*}
  &\frac{d}{dr}\Gamma_r(\mu_r,\psi_r)\\
  &=\sum_{x,y}\nabla\psi_r \nabla\Delta_r\psi_r\Lambda_r(\mu_r)(x,y)\\
  &\quad+\frac12\sum_{x,y}|\nabla\psi_r|^2\Big[-\partial_1\Lambda_r(\mu_r)\hat\Delta_r\mu_r(x)Q_r(x,y)-\partial_2\Lambda_r(\mu_r) \hat\Delta_r\mu_r(y) Q_r(y,x)\Big]\\
  &\quad+\frac12\sum_{x,y}|\nabla\psi_r|^2\Big[\partial_1\Lambda_r(\mu_r)\mu_r(x)\dot Q_r(x,y)+\partial_2\Lambda_r(\mu_r) \mu_r(y) \dot Q_r(y,x)\Big]\;,
 \end{align*}
 where we have put for brevity
 $\partial_1\Lambda_r(\mu)(x,y)=\partial_1\Lambda\big(\mu(x)Q_r(x,y),\mu(y)Q_r(y,x)\big)$
 and similarly for $\partial_2\Lambda_r(\mu)$. Inserting the
 definition of $\Gamma_{2,r}$ and $\partial_r\Gamma_r$ we obtain
\begin{align*}
  \frac{d}{dr}\Gamma_r(\mu_r,\psi_r)
  =-2\Gamma_{2,r}(\mu_r,\psi_r)+\partial_r\Gamma_r(\mu_r,\psi_r)\leq 0\;,
 \end{align*}
 where the last inequality follows from
 \eqref{eq:Bochner}. Integrating over $r\in(s,t)$ then yields the
 gradient estimate \eqref{eq:grad-est}.

 \medskip {\bf Step 2:} Now, we establish \eqref{eq:grad-est} across a
 singular time, i.e.~for $t_{i-1}<s<t_i<t<t_{i+1}$. This then readily
 implies \eqref{eq:grad-est} for all $s,t$. From the previous step we
 obtain for $\eps>0$ sufficiently small
\begin{align*}
\Gamma_t(\mu,P_{t,s}\psi)\leq& \Gamma_{t_i+\eps}(\hat P_{t,t_i+\eps}\mu, P_{t_i+\eps,s}\psi)\;,\\
\Gamma_s(\hat P_{t,s}\mu,\psi)\geq& \Gamma_{t_i-\eps}(\hat P_{t,t_i-\eps}\mu,P_{t_i-\eps,s}\psi)\;.
\end{align*}
Thus, it will be sufficient to show
\begin{align}\label{eq:Gamma-cont}
\lim_{\eps\to0} \Gamma_{t_i-\eps}(\hat P_{t,t_i-\eps}\mu,P_{t_i-\eps,s}\psi)=\Gamma_{t_i}(\hat P_{t,t_i}\mu,P_{t_i,s}\psi)
=\lim_{\eps\to0}\Gamma_{t_i+\eps}(\hat P_{t,t_i+\eps}\mu, P_{t_i+\eps,s}\psi)
\;.
\end{align}
Let us first show that the first identity in
\eqref{eq:Gamma-cont}. For this let $z\in\bar\cX_i$ and write
$\mu_\eps=\hat P_{t,t_i-\eps}\mu$, $\psi_\eps=P_{t_i-\eps,s}\psi$, $\Lambda_\eps=\Lambda_{t_i-\eps}$, and
$c=c_{i-1}$. Then, using \eqref{eq:asymptotic6}, we estimate for ${x,y\in c^{-1}(z)}$:
\begin{align*}
|\nabla\psi_\eps|^2(x,y)\Lambda_{\eps}(\mu_\eps)(x,y)
\leq C\exp\Big(-\int_{s}^{t_i-\eps}Q^{z,c}_{i,\min}(r)\, \dd r\Big)Q^{z,c}_{i,\max}(t_i-\eps)\;.
\end{align*}
Hence we find with the assumption \eqref{eq:rate-int-infty2} that for
all $z\in\bar\cX_i$
\begin{align*}
\lim_{\eps\to0}\sum_{x,y\in c^{-1}(z)}|\nabla\psi_\eps|^2(x,y)\Lambda_\eps(\mu_\eps)(x,y)=0.
\end{align*}
Moreover, for $z\neq z'\in\bar\cX_i$ with Theorem \ref{thm:heat-eq} and \eqref{eq:limit-Q} we find
\begin{align*}
&\lim_{\eps\to0}\sum_{x\in c^{-1}(z),y\in c^{-1}(z')}|\nabla\psi_\eps|^2(x,y)\Lambda_\eps(\mu_\eps)(x,y)\\
=&\sum_{x\in c^{-1}(z),y\in c^{-1}(z')}|\nabla\psi_0|^2(z,z')\Lambda\Big(\mu_0(z)\frac{\pi_i^c(x)}{\pi_{t_i}(z)}Q^c_i(x,y),\mu_0(z')\frac{\pi_i^c(y)}{\pi_{t_i}(z')}Q^c_i(y,x)\Big)\;.
\end{align*}
From the positive $1$-homogeneity of $\Lambda$ we have that
$\Lambda(r, s)+\Lambda(r',s')=\Lambda(r+r',s+s')$ whenever
$r=\lambda r'$ and $s=\lambda s'$ for some $\lambda\geq0$. Since we
have $Q^c_i(x,y)\pi^c_i(x)=Q^c_i(y,x)\pi^c_i(y)$, we deduce that
\begin{align*}
&\lim_{\eps\to0}\sum_{x\in c^{-1}(z),y\in c^{-1}(z')}|\nabla\psi_\eps|^2(x,y)\Lambda_\eps(\mu_\eps)(x,y)\\
=&|\nabla\psi_0|^2(z,z')\Lambda\Big(\mu_0(z)\sum_{x\in c^{-1}(z),y\in c^{-1}(z')}\frac{\pi_i^c(x)}{\pi_{t_i}(z)}Q^c(x,y),\mu_0(z')\sum_{x\in c^{-1}(z),y\in c^{-1}(z')}\frac{\pi_i^c(y)}{\pi_{t_i}(z')}Q^c(y,x)\Big)\\
=&|\nabla\psi_0|^2(z,z')\Lambda\Big(\mu_0(z)Q_{t_i}(z,z'),\mu_0(z')Q_{t_i}(z',z)\Big),
\end{align*}
where we used again \eqref{eq:limit-Q}. Summing over all
$z\neq z'\in \bar\cX_i$ yields the first identity in
\eqref{eq:Gamma-cont}.

Let us now show the second identity. We write $s_i=s$ and
$\mu_\eps=\hat P_{t,t_i+\eps}\mu$,
$\psi_\eps=P_{t_i+\eps,s}\psi$, and $\Lambda_\eps=\Lambda_{t_i+\eps}$. Then, by
\eqref{eq:asymptotic2} we obtain for $z\in\bar \cX_i$ and $x,y\in s^{-1}(z)$:
\begin{align*}
&|\nabla\psi_\eps|^2(x,y)\Lambda_{\eps}(\mu_\eps)(x,y)\leq 4C\eps^2 Q^{z,s}_{i,\max}({t_i+\eps})
\end{align*}
for some constant $C$. Hence we deduce from \eqref{eq:rate-int-infty3} that 
\begin{align*}
\lim_{\eps\to0}\sum_{x,y\in s^{-1}(z)}|\nabla\psi_\eps|^2(x,y)\Lambda_{t_i+\eps}(\mu_\eps)(x,y)=0\;.
\end{align*}
For $z\neq z'\in\bar\cX_i$ with Theorem \ref{thm:heat-eq} and  \eqref{eq:asymptotic5} we find similarly as above, that
\begin{align*}
&\lim_{\eps\to0}\sum_{x\in s^{-1}(z),y\in s^{-1}(z')}|\nabla\psi_\eps|^2(x,y)\Lambda_{t_i+\eps}(\mu_\eps)(x,y)\\
&=|\nabla\psi_0|^2(z,z')\sum_{x\in s^{-1}(z),y\in s^{-1}(z')}\Lambda\Big(\mu_0(z)\frac{\pi^{s,z}}{\pi_{t_i}(y)}Q^s_i(x,y),\mu_0(z')\frac{\pi^{s}_i(y)}{\pi_{t_i}(z')}Q^s(y,x)\Big)\\
&=|\nabla\psi_0|^2(z,z')\Lambda\Big(\mu_0(z)Q_{t_i}(z,z'),\mu_0(z')Q_{t_i}(z',z)\Big)\;.
\end{align*}
Summing over all $z,z'\in\bar \cX_{i}$ yields the second identity in \eqref{eq:Gamma-cont}. This finishes the proof.
\end{proof}

\medskip

\begin{proof}[Proof of $(II)\Rightarrow(I)$]~\\
  Consider $t_i< s<t<t_{i+1}$ for some $i$ and set again  $\mu_r=\hat P_{t,r}\mu$ and
  $\psi_r=P_{r,s}\psi$ for $\mu\in\cP(\cX_i)$ and $\psi\in\R^{\cX_i}$. Arguing similarly as before, we find
\begin{align*}
  0\geq\Gamma_t(\mu,P_{t,s}\psi)- \Gamma_s(\hat P_{t,s}\mu,\psi)=\int_s^t\frac{d}{dr}\Gamma_r(\mu_r,\psi_r)\ddd r
  =\int_s^t-2\Gamma_{2,r}(\mu_r,\psi_r)+\partial_r\Gamma_r(\mu_r,\psi_r)\ddd r.
\end{align*}
Dividing by $t-s$ and letting $s\to t$ the Lebesgue differentiation
theorem implies that for a.e.~$t\in(t_i,t_{i+1})$ we have
\begin{align*}
  \Gamma_{2,t}(\mu,\psi)\geq\frac12 \partial_t\Gamma_t(\mu,\psi)\;,
\end{align*}
which proves the claim.
\end{proof}

\subsection{Transport estimates}
\label{sec:trapo}
In this section, we will prove the implication $(II)\Leftrightarrow(III)$.

To this end, we will use the dual characterization of the discrete
transport distance given by Theorem \ref{thm:dual}. We denote by
$\HJ_{\cX_t}^1$ the set of Hamilton--Jacobi subsolutions on the
interval $[0,1]$ for the triple $(\cX_t,Q_t,\pi_t)$. Further, we need
an observation on the that metric tensor can be expressed as a limit
of distances. For this, recall from Section \ref{sec:BA} that on a
Markov triple $(\cX,Q,\pi)$ the metric $\cW$ is induced by a
Riemannian metric tensor on $\cP_*(\cX)$ which is given for
$\mu\in\cP_*(\cX)$ by
$g_\mu(s)=\ip{\nabla \psi,\nabla
  \psi}_\mu=\ip{\nabla\psi,\nabla\psi\cdot\Lambda(\mu)}$,
where we have identified the tangent space $\mathcal T$ with the space
of discrete gradients $\mathcal G$ (resp.~the set $\mathcal G'$ of
functions modulo constants) via the map
\begin{align*}
 s= K_\mu\psi = -\nabla\cdot\big(\nabla\psi\cdot\Lambda(\mu)\big)\;.
\end{align*}
In other words, we have $g_\mu(s)=\ip{s,K_\mu^{-1}s}=:G(\mu,s)$. Note
that for any $\omega\in \mathcal T$, $\psi\in\mathcal G'$, and
$\mu\in\cP_*(\cX)$ we have
\begin{align}\label{eq:Lagrange}
  \Gamma(\mu,\psi) = \ip{\nabla\psi,\nabla\psi\cdot\Lambda(\mu)} 
  = \ip{\psi, K_\mu \psi} 
   \geq 2\ip{\omega,\psi} - \ip{\omega,K_\mu^{-1}\omega}
  =  2\ip{\omega,\psi} - G(\mu,\omega)\;,
\end{align}
and we have equality if $\omega=K_\mu\psi$. The following is a direct
consequence of the observation that $\cW$ is the Riemannian distance
associated to $g_\mu$.

\begin{lemma}\label{lem:asymptotic-W}
  For any $C^1$-curve $(\mu^a)_{a\in[0,1]}$ in $\cP_*(\cX)$ we have
  \begin{align}\label{eq:asymptot}
    \lim_{a\downarrow0}\frac{1}{a^2}\cW(\mu^0,\mu^a)^2 = G(\mu^0,\dot\mu^0)\;.
  \end{align}
\end{lemma}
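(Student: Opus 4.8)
The plan is to establish the two matching bounds $\limsup_{a\downarrow0}\frac{1}{a^2}\cW(\mu^0,\mu^a)^2\le G(\mu^0,\dot\mu^0)$ and $\liminf_{a\downarrow0}\frac{1}{a^2}\cW(\mu^0,\mu^a)^2\ge G(\mu^0,\dot\mu^0)$ separately. Two elementary facts about the Benamou--Brenier-type formula \eqref{eq:def-W} will be used repeatedly: first, for a fixed measure $\nu\in\cP_*(\cX)$ and a fixed tangent vector $\dot\nu\in\mathcal{T}$, minimising the action over all momentum fields $V$ with $-\nabla\cdot V=\dot\nu$ gives $\min_V\cA(\nu,V)=G(\nu,\dot\nu)$, the optimiser being the gradient field $\Lambda(\nu)\cdot\nabla\phi$ with $K_\nu\phi=\dot\nu$ (this is precisely the content of \eqref{eq:Lagrange}); second, $(\mu,s)\mapsto G(\mu,s)=\ip{s,K_\mu^{-1}s}$ is continuous on $\cP_*(\cX)\times\mathcal{T}$ and $s\mapsto G(\mu,s)$ is a convex positive quadratic form on $\mathcal{T}$.

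For the upper bound I would test \eqref{eq:def-W} with the reparametrised curve $\nu_b:=\mu^{ba}$, $b\in[0,1]$, and the momentum field $V_b:=a\,\Lambda(\mu^{ba})\cdot\nabla\phi^{ba}$ where $K_{\mu^{ba}}\phi^{ba}=\dot\mu^{ba}$. This pair is admissible, since the curve stays in $\cP_*(\cX)$ and $-\nabla\cdot V_b=aK_{\mu^{ba}}\phi^{ba}=a\dot\mu^{ba}=\dot\nu_b$, and evaluating the action gives $\cW(\mu^0,\mu^a)^2\le a^2\int_0^1 G(\mu^{ba},\dot\mu^{ba})\,\dd b$. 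Since $\mu$ is $C^1$ and the curve lies in a compact subset of $\cP_*(\cX)$, the integrand converges to $G(\mu^0,\dot\mu^0)$ uniformly in $b$ as $a\downarrow0$, which yields the claimed $\limsup$ bound.

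For the lower bound the degeneracy of the metric tensor on $\partial\cP(\cX)$ must be handled by localisation. Fix $\eps>0$ and pick an open Euclidean neighbourhood $U\ni\mu^0$ with $\bar U\subset\cP_*(\cX)$, small enough (using continuity of $\mu\mapsto\Lambda(\mu)(x,y)$ on the compact set $\bar U$ and that there are only finitely many edges) that $\Lambda(\mu)(x,y)\le(1+\eps)\Lambda(\mu^0)(x,y)$ for all $x,y$ and all $\mu\in\bar U$; set $\rho:=\inf\{\cW(\mu^0,\nu):\nu\in\partial U\}$, which is strictly positive because $\cW$ metrises the usual topology on $\cP(\cX)$ \cite{Ma11} and $\partial U$ is compact with $\mu^0\notin\partial U$. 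Now let $(\nu,V)\in\CE_1(\mu^0,\mu^a)$ be any competitor of finite action. If $\nu$ leaves $\bar U$, say first at parameter $b_*$, then rescaling the restriction of $(\nu,V)$ to $[0,b_*]$ to the unit interval gives $\int_0^1\cA(\nu_b,V_b)\,\dd b\ge\frac{1}{b_*}\cW(\mu^0,\nu_{b_*})^2\ge\rho^2$. If instead $\nu$ stays in $\bar U\subset\cP_*(\cX)$, then, since finite action forces $V_b(x,y)=0$ wherever $\Lambda(\mu^0)(x,y)=0$, we have $\cA(\nu_b,V_b)\ge\frac{1}{1+\eps}\cA(\mu^0,V_b)\ge\frac{1}{1+\eps}G(\mu^0,\dot\nu_b)$ by the frozen-coefficient version of the first fact, and integrating together with Jensen's inequality (convexity of $G(\mu^0,\cdot)$) yields $\int_0^1\cA(\nu_b,V_b)\,\dd b\ge\frac{1}{1+\eps}G(\mu^0,\mu^a-\mu^0)$. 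Taking the infimum over competitors, $\cW(\mu^0,\mu^a)^2\ge\min\{\rho^2,\ \frac{1}{1+\eps}G(\mu^0,\mu^a-\mu^0)\}$; as $G(\mu^0,\mu^a-\mu^0)\to0$, for small $a$ the minimum is the second term, and dividing by $a^2$ and letting $a\downarrow0$ (using $(\mu^a-\mu^0)/a\to\dot\mu^0$ and continuity of $G(\mu^0,\cdot)$) gives $\liminf_{a\downarrow0}\frac{1}{a^2}\cW(\mu^0,\mu^a)^2\ge\frac{1}{1+\eps}G(\mu^0,\dot\mu^0)$. Letting $\eps\downarrow0$ completes the proof.

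The only genuinely delicate point is this localisation in the lower bound: because $g$ degenerates on $\partial\cP(\cX)$ one cannot freeze the coefficients globally, so one has to rule out that near-optimal curves between nearby interior points escape a fixed compact neighbourhood of $\mu^0$ inside $\cP_*(\cX)$ — this is exactly what the bound $\rho>0$ and the rescaling argument provide. Everything else is the routine first-order expansion of a Riemannian distance.
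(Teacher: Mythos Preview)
Your proof is correct. The paper itself offers no argument beyond the one-line remark that the claim ``is a direct consequence of the observation that $\cW$ is the Riemannian distance associated to $g_\mu$,'' i.e.~it appeals to the standard first-order expansion of a Riemannian distance. Your proposal supplies exactly this expansion in full detail, working directly with the Benamou--Brenier formulation \eqref{eq:def-W} rather than quoting the abstract Riemannian fact.

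The only place where your argument genuinely adds content is the localisation in the lower bound: you correctly observe that because the metric tensor degenerates on $\partial\cP(\cX)$, one cannot freeze coefficients globally, and you rule out escape of near-optimal curves from a fixed interior neighbourhood via the $\rho>0$ bound and the rescaling estimate $\int_0^1\cA\ge \tfrac{1}{b_*}\cW(\mu^0,\nu_{b_*})^2$. This is precisely the subtlety that the paper's one-line justification sweeps under the rug (though it is implicitly handled by the results of \cite{Ma11} cited there, which establish that $\cW$ is a genuine geodesic metric inducing the Euclidean topology). Your direct argument via the action functional is more elementary and self-contained; the paper's route is shorter but relies on the Riemannian structure already having been worked out elsewhere.
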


\begin{proof}[Proof of $(II)\Rightarrow(III)$]~\\
  Fix $0\leq s\leq t\leq T$ and let $(\phi^a)_{a\in[0,1]}$ be a Hamilton--Jacobi subsolution in
  $\HJ_{\cX_s}^1$. The gradient estimate \eqref{eq:grad-est} implies
  that $(P_{t,s}\phi^a)_a$ is again a Hamilton--Jacobi subsolution in $\HJ_{\cX_t}^1$. Indeed for any
  $\mu\in\cP(\cX_t)$ we have
  \begin{align*}
    \ip{\partial_aP_{t,s}\phi^a,\mu} 
    &=
     \ip{\dot\phi^a,\hat P_{t,s}\mu}
    \leq -\frac12\Gamma_s(\hat P_{t,s}\mu,\phi^a)
    \leq -\frac12\Gamma_t(\mu,P_{t,s}\phi^a)\;.
  \end{align*}
Thus, by the duality result Theorem \ref{thm:dual} we have
\begin{align*}
  \ip{\phi^1,\hat P_{t,s}\mu}-\ip{\phi^0,\hat P_{t,s}\nu}
 =
  \ip{P_{t,s}\phi^1,\mu}-\ip{P_{t,s}\phi^0,\nu}
\leq \frac12\cW_t(\mu,\nu)^2\;.
\end{align*}
Taking the supremum over $\phi$ and using again Theorem \ref{thm:dual} yields the claim.
\end{proof}

\medskip

\begin{proof}[Proof of $(III)\Rightarrow(II)$]~\\
  It suffices to show \eqref{eq:grad-est} for strictly positive measures, i.e.~$\mu\in\cP_*(\cX_t)$. The statement for general $\mu\in \cP(\cX_t)$ follows by approximation. Fix  $\mu\in\cP_*(\cX_t)$ and $\psi\in\R^{\cX_t}$ and put
   $\psi_r =P_{r,s}\psi$ for $r\in[s,t]$. Let $(\mu^a)_{a\in[0,1]}$ be a curve such that
\begin{align*}
\dot\mu^0+\nabla\cdot\big(\Lambda_t(\mu^0)\cdot\nabla\psi_t\big)=0\;.
\end{align*}
For instance, one could take
$\mu^a=\mu-a\eps\nabla\cdot\big(\Lambda_t(\mu)\nabla\psi_t\big)$ for
$\eps$ sufficiently small. Finally, put $\mu_r^a=\hat P_{t,r}\mu^a$
and $w_r=\dot\mu^0_r=\hat P_{t,r}\dot\mu^0$. Note that $r\mapsto \ip{w_r,\psi_r}$ is
constant. Now, we deduce from the transport estimate using \eqref{eq:asymptot} and \eqref{eq:Lagrange} that
\begin{align*}
  \Gamma_s(\mu_s,\psi_s) &\geq \ip{w_s,\psi_s}-G_s(\mu_s,w_s) =\ip{w_s,\psi_s}-\lim_{a\downarrow0}\frac1{a^2}\cW_s(\mu_s^0,\mu^a_s)^2\\
&\geq \ip{w_t,\psi_t}-\lim_{a\downarrow0}\frac1{a^2}\cW_t(\mu_t^0,\mu^a_t)^2 = \ip{w_t,\psi_t}-G_t(\mu_t,w_t)\\
&=\Gamma_t(\mu_t,\psi_t)\;.
\end{align*}
This proofs the claim.
\end{proof}

\subsection{Entropy and convexity}
In this section we prove the implication $(IV)\Rightarrow(I)$.

Let us first observe the following. Let
$(\mu^a)_{a\in(-\varepsilon,\varepsilon)}$ be a $\cW_t$-geodesic in
$\cP_*(\cX_t)$. Note that in this case we have 
\begin{align*}
  \cW_t(\mu^b,\mu^c)^2=\frac{1}{(c-b)^2}\int_b^c\norm{\nabla\psi^a}^2_{\mu^a,t}\dd a\;,
\end{align*}
$\dot\mu^a+K_{\mu^a,t}\nabla\psi^a=0$ and where
$|\nabla \psi|^2_{\mu,t}$ denotes the inner product on the tangent
space at $\mu$ associated to $(\cX_t,Q_t,\pi_t)$.  For $s\neq t$ we
have with the same choice of $\psi$
\begin{align*}
  \cW_s(\mu^b,\mu^c)^2\leq\frac{1}{(c-b)^2}\int_b^c\norm{\nabla\psi^a}^2_{\mu^a,s}\dd a\;.
\end{align*}
This implies that 
\begin{align}\label{eq:partialW}
\partial_t^-\cW^2_{t-}(\mu^b,\mu^c)\leq\frac{1}{(c-b)^2}\int_b^c-\partial_t^-\norm{\nabla\psi^a}^2_{\mu^a,t-}\dd a\;,
\end{align}
where the minus is due to the fact that $K_{\mu,t}$ is the inverse of the
metric tensor.

\medskip

\begin{proof}[Proof of $(IV)\Rightarrow(I)$]~\\
  Let $\mu\in\cP_*(\cX_t)$ and $\psi\in\R^{\cX_t}$. Let
  $(\mu^a)_{a\in(-\varepsilon,\varepsilon)}$ be the geodesic starting
  in $\mu$ with initial velocity $\nabla\psi$ and let $\psi^a$ as
  above. Then dynamic convexity together with \eqref{eq:partialW}
  implies that
\begin{align*}
\langle \Hess_t\mathcal H_t(\mu)\nabla\psi,\nabla\psi\rangle_{\mu,t}=\frac{\dd^2}{\dd a^2}\cH(\mu^a)|_{a=0}\geq\frac12\partial_t^-\norm{\nabla\psi}_{\mu,t-}^2,
\end{align*}
where we used Proposition 16.2 in \cite{Vil09}. To finish the proof,
it suffices to recall that from Section \ref{sec:BA} and the
definition of $\partial_t\Gamma_t$ that for every $t$ where
$t\mapsto Q_t$ is differentiable we have
  \begin{align*}
   \langle \Hess_t\mathcal H_t(\mu)\nabla\psi,\nabla\psi\rangle_{\mu,t}&=\Gamma_{2,t}(\mu,\psi)\;,\\
\partial_t^-|\nabla\psi|^2_{\mu,t-}&=\partial_t\Gamma_t(\mu,\psi)\;.
  \end{align*} 
\end{proof}

\subsection{Dynamic EVI}
In this section we prove the implication $(II)\Rightarrow(VI)$. More
precisely, we will show that the gradient estimate $(II)$ implies the
dynamic EVI$^-$ property for the heat flow, which together with the
transport estimate $(III)$ implies dynamic convexity $(I)$.

To this end we introduce in the spirit of \cite{sturm2016} an analogue
of the transport distance across different time slices.  We fix an
interval $I_i=(t_i,t_{i+1})$ between two singular times for some $i$ and
given $s,t\in(t_i,t_{i+1})$ for some $i$ and
$\mu^0,\mu^1\in\cP(\cX_i)$ we define
 \begin{align}\label{eq:dual-W-dyn}
  \frac12\cW_{s,t}(\mu^0,\mu^1)^2 = \sup\big\{\ip{\phi^1,\mu^1}-\ip{\phi^0,\mu^0}~:~\phi\in\HJ_{s,t}\big\}\;,
\end{align}
 where $\HJ_{s,t}$ denotes the set of all $C^1$ functions $\phi:[0,1]\to\R^{\cX_i}$ satisfying
 \begin{align}\label{eq:dual-contraint-dyn}
   \ip{\dot\phi^a,\mu} + \frac12\norm{\nabla\phi^a}_{\mu,\theta(a)}^2\leq 0
   \quad\forall \mu\in\cP(\cX_i),~a\in(0,1)\;,
 \end{align}
 where $\theta(a):=s+a(t-s)$. Note that this is not a distance in the
 usual sense since $\cW_{s,t}(\mu^0,\mu^1)\neq\cW_{s,t}(\mu^1,\mu^0)$.
 In the rest of this section we drop the index $i$ and write $\cX$
 instead of $\cX_i$ and $I$ for $I_i$.

Due to the local Lipschitz continuity in time of $Q$ and $\pi$ we have the following control.
\begin{lemma}\label{lem:log-lip}
  For each compact subinterval $J\subset I$ there exists a constant
  $L>0$ such that for all $\mu^0,\mu^1\in\PX$ and $s,t\in J$:
 \begin{align}\label{eq:log-lip-W}
    e^{-L|t-s|}\cW_{t}(\mu^0,\mu^1)^2&\leq \cW_s(\mu^0,\mu^1)^2 \leq e^{L|t-s|}\cW_t(\mu^0,\mu^1)^2\;,\\\label{eq:log-lip-W2}
   e^{-L|t-s|}\cW_{s}(\mu^0,\mu^1)^2&\leq \cW_{s,t}(\mu^0,\mu^1)^2 \leq e^{L|t-s|}\cW_s(\mu^0,\mu^1)^2\;.
 \end{align}
\end{lemma}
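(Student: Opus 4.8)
The plan is to deduce both double inequalities from a single pointwise comparison of the coefficient fields $\Lambda_r(\mu)$ at nearby times $r\in J$. Fix a compact subinterval $J\subset I$. By Definition~\ref{def:sing-triple}(3), for every edge present on $I$ the map $r\mapsto\log Q_r(x,y)$ is Lipschitz on $J$, so there is a constant $L>0$ with
\begin{align*}
  e^{-L|t-s|}Q_s(x,y)\ \leq\ Q_t(x,y)\ \leq\ e^{L|t-s|}Q_s(x,y)\qquad\forall\,x,y\in\cX,\ s,t\in J
\end{align*}
(edges absent on $I$ play no role, since then $Q_r(x,y)\equiv0$ there). Because the logarithmic mean is nondecreasing in each argument and positively $1$-homogeneous, this yields
\begin{align*}
  e^{-L|t-s|}\,\Lambda_s(\mu)(x,y)\ \leq\ \Lambda_t(\mu)(x,y)\ \leq\ e^{L|t-s|}\,\Lambda_s(\mu)(x,y)
\end{align*}
for all $\mu\in\PX$, all $x,y\in\cX$ and all $s,t\in J$, including the degenerate cases where a coefficient vanishes.

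For \eqref{eq:log-lip-W} I would argue from the Benamou--Brenier formula \eqref{eq:def-W}: the constraint set $\CE_1(\mu^0,\mu^1)$ does not depend on time, and by the bound on $\Lambda$ together with the explicit form $\alpha(x,s,t)=x^2/\Lambda(s,t)$ of the action density one has $\cA_s(\mu,V)\leq e^{L|t-s|}\cA_t(\mu,V)$ for every admissible $(\mu,V)$ (the cases $\Lambda=0$ being covered by $e^{L|t-s|}\cdot0=0$ and $e^{L|t-s|}\cdot(+\infty)=+\infty$). Integrating over $[0,1]$ and taking the infimum over $\CE_1(\mu^0,\mu^1)$ gives $\cW_s(\mu^0,\mu^1)^2\leq e^{L|t-s|}\cW_t(\mu^0,\mu^1)^2$; exchanging $s$ and $t$ yields the other half of \eqref{eq:log-lip-W}.

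For \eqref{eq:log-lip-W2} I would pass between the two dual representations---\eqref{eq:dual-W-dyn} for $\cW_{s,t}$ and Theorem~\ref{thm:dual} for $\cW_s$---via the elementary rescaling $\phi\mapsto c\phi$ (a static analogue of Remark~\ref{rem:HJ-scaling}). Note first that $\theta(a)=s+a(t-s)\in J$ with $|\theta(a)-s|\leq|t-s|$ for every $a\in[0,1]$, so by the bound on $\Lambda$ one has $e^{-L|t-s|}\norm{\nabla\phi^a}_{\mu,s}^2\leq\norm{\nabla\phi^a}_{\mu,\theta(a)}^2\leq e^{L|t-s|}\norm{\nabla\phi^a}_{\mu,s}^2$. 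Given $\phi\in\HJ_{s,t}$, set $c:=e^{-L|t-s|}$ and $\chi^a:=c\,\phi^a$; then
\begin{align*}
  \ip{\dot\chi^a,\mu}+\tfrac12\norm{\nabla\chi^a}_{\mu,s}^2
  &=c\Big(\ip{\dot\phi^a,\mu}+\tfrac{c}{2}\norm{\nabla\phi^a}_{\mu,s}^2\Big)\\
  &\leq c\Big(\ip{\dot\phi^a,\mu}+\tfrac12\norm{\nabla\phi^a}_{\mu,\theta(a)}^2\Big)\ \leq\ 0\,,
\end{align*}
using $c\,\norm{\nabla\phi^a}_{\mu,s}^2\leq\norm{\nabla\phi^a}_{\mu,\theta(a)}^2$ and then \eqref{eq:dual-contraint-dyn}; hence $\chi$ is a (($C^1$)) Hamilton--Jacobi subsolution for $(\cX,Q_s,\pi_s)$, and Theorem~\ref{thm:dual} gives
\begin{align*}
  \ip{\phi^1,\mu^1}-\ip{\phi^0,\mu^0}=c^{-1}\big(\ip{\chi^1,\mu^1}-\ip{\chi^0,\mu^0}\big)\leq\tfrac12\, e^{L|t-s|}\,\cW_s(\mu^0,\mu^1)^2\,.
\end{align*}
Taking the supremum over $\phi\in\HJ_{s,t}$ gives $\cW_{s,t}(\mu^0,\mu^1)^2\leq e^{L|t-s|}\cW_s(\mu^0,\mu^1)^2$. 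Conversely, if $\psi$ is a Hamilton--Jacobi subsolution for $(\cX,Q_s,\pi_s)$, the same computation with $c:=e^{-L|t-s|}$ (now using $\norm{\nabla\psi^a}_{\mu,\theta(a)}^2\leq e^{L|t-s|}\norm{\nabla\psi^a}_{\mu,s}^2$ and the identity $c\,e^{L|t-s|}=1$) shows $c\psi\in\HJ_{s,t}$, whence $\tfrac12\cW_{s,t}(\mu^0,\mu^1)^2\geq c\cdot\tfrac12\cW_s(\mu^0,\mu^1)^2$, which is the remaining inequality in \eqref{eq:log-lip-W2}.

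I do not expect a genuine obstacle: the argument is bookkeeping around the scaling of $\Lambda$ and of the action density $\alpha$, together with the rescaling of Hamilton--Jacobi subsolutions. The points that need a little care are to track the direction of the factor $e^{\pm L|t-s|}$ in the dual constraint and to choose $c=e^{-L|t-s|}$ so that $c\cdot e^{L|t-s|}=1$ closes the inequality, and to observe that the edges on which $\Lambda_r(\mu)$ degenerates cause no trouble, since the corresponding components of $V$ (resp.\ of $\nabla\phi$) are either forced to vanish or multiplied by zero.
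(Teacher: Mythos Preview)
Your proof is correct and follows essentially the same approach as the paper. For \eqref{eq:log-lip-W} the paper simply says it ``follows immediately'' from the log-Lipschitz bound on $Q$ and the definition of $\cW_t$, which is precisely your primal argument via $\cA$; for \eqref{eq:log-lip-W2} the paper uses the identical rescaling $\varphi\mapsto e^{-L|t-s|}\varphi$ to pass between $\HJ^1_{\cX_s}$ and $\HJ_{s,t}$, carrying out one direction explicitly and saying ``the other bound follows analogously,'' whereas you spell out both directions.
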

\begin{proof}
  Recall that by assumption the maps $r\mapsto Q_r$ and
  $r\mapsto \pi_r$ are log-Lipschitz on $J$ for some constant constant
  $L$, i.e.  for all $x,y\in\cX$ and $s,t\in I$ we have
  \begin{align}\label{eq:log-lip}
    e^{-L|t-s|}Q_s(x,y)\leq  Q_t(x,y)\leq e^{L|t-s|} Q_s(x,y)\;,\  e^{-L|t-s|}\pi_s(x)\leq  \pi_t(x)\leq e^{L|t-s|} \pi_s(x)\;.
  \end{align}
 
  The estimate \eqref{eq:log-lip-W} follows immediately from this and
  the definition of $\cW_t$.

 To show \eqref{eq:log-lip-W2} let
  $\varphi\in\HJ^1_{\cX_s}$ be a Hamilton--Jacobi subsolution with respect to
  $(\cX,Q_s,\pi_s)$. Then applying \eqref{eq:log-lip} yields
\begin{equation}\label{HopLaxstatic}
\ip{\dot\varphi^a,\mu}\leq-\frac12\norm{\nabla\varphi^a}^2_{\mu,s}\leq -\frac12 e^{-L|s-t|}\norm{\nabla\varphi^a}^2_{\mu,\theta(a)} \quad \forall\mu\in\cP(\cX),\ a\in[0,1]\; .
\end{equation}
Set $\tilde\varphi^a:=e^{-L|s-t|}\varphi^{a}$. Then $\tilde\varphi$
solves
\begin{equation*}
\ip{\dot{\tilde\varphi}^a,\mu}\leq-\frac12\norm{\nabla\tilde\varphi^a}_{\mu,\theta(a)}^2 \quad \forall\mu\in\cP(\cX),\ a\in[0,1]
\end{equation*}
and
\begin{equation*}
 e^{-L|s-t|}\left(\ip{\varphi^{1},\mu^1}-\ip{\varphi^{0},\mu^0}\right)=\ip{\tilde\varphi^{1},\mu^1}-\ip{\tilde\varphi^{0},\mu^0}\; .
\end{equation*}
Hence
\begin{equation*}
 e^{-L|s-t|}\left(\ip{\varphi^{1},\mu^1}-\ip{\varphi^{0},\mu^0}\right)\leq \frac12 \cW_{s,t}(\mu^0,\mu^1)^2.
\end{equation*}

Taking the supremum among all such $\varphi$ yields by Theorem \ref{thm:dual}
\begin{equation*}
\cW_s(\mu^0,\mu^1)^2\leq e^{L|s-t|}\cW_{s,t}(\mu^0,\mu^1)^2,
\end{equation*}
which proves the left bound in \eqref{eq:log-lip-W2}. The other bound follows analogously.
\end{proof}

\begin{definition}
  We say that a curve $(\mu_t)_{t\geq0}$ is a dynamic (upward)
  EVI$^{\ -}$-gradient flow for the entropy if it is locally
  absolutely continuous on $(0,\infty)$ and continuous at $0$ and for
  all $t\in(0,T)$ and all $\sigma\in\cP(X)$ we have
 \begin{align*}
  \frac12\partial_s^-\cW_{s,t}(\mu_s,\sigma)^2_{s=t-}\geq \cH_t(\mu_t)-\cH_t(\sigma)\;.
 \end{align*}
\end{definition}

\begin{proposition}[Gradient estimate implies EVI$^-$-dyn]\label{prop:grad-est-evi}
  Assume that the gradient estimate \eqref{eq:grad-est} holds for on
  the interval $I$ . For $\mu\in\PX$ and $\tau\in I$ let
  $\mu_t=\hat P_{\tau,t}\mu$ denote the dual heat flow starting from
  $\mu$. Then for all $s,t\in I$ with $s\leq t\leq \tau$ and
  $\sigma \in \PX$ we have:
  \begin{align}\label{eq:EVI-long}
    \cH_s(\mu_s)-\cH_t(\sigma)\leq \frac{1}{2(t-s)}
   \Big[\cW_t(\mu_t,\sigma)^2-\cW_{s,t}(\mu_s,\sigma)^2\Big]
  -(t-s)\int_0^1\ip{\dot p_{\theta(a)},\mu^a_\theta}\dd a\;.
  \end{align}
  Here $(\mu^a)_{a\in[0,1]}$ is a $\cW_t$-geodesic connecting
  $\mu^0=\mu_t$ to $\mu^1=\sigma$ and we have put
  $\mu^a_\theta:=\hat P_{t,\theta(a)}\mu^a$. Recall the notation
  $p_s:=\log \pi_s$.

In particular, $\mu_t$ is a dynamic upward EVI$^{\ -}$-gradient flow.
\end{proposition}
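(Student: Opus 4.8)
The plan is to prove the quantitative bound \eqref{eq:EVI-long} for all $s\le t\le\tau$ in $I$, and then to obtain the dynamic EVI$^-$ property by letting $s\uparrow t$. Throughout, write $\rho^a_\theta:=\mu^a_\theta/\pi_{\theta(a)}$ and let $\psi^a$ be the potential of the $\cW_t$-geodesic $(\mu^a)$, so that $\dot\mu^a=-K_{\mu^a,t}\nabla\psi^a$ and $\int_0^1\norm{\nabla\psi^a}^2_{\mu^a,t}\dd a=\cW_t(\mu_t,\sigma)^2$. By the propagator identity $\hat P_{\tau,s}=\hat P_{t,s}\circ\hat P_{\tau,t}$ of Proposition \ref{prop:propagator} one has $\mu^0_\theta=\hat P_{t,s}\mu_t=\mu_s$ and $\mu^1_\theta=\sigma$, so that $(\mu^a_\theta)_{a\in[0,1]}$ is an admissible competitor in the primal formulation of $\cW_{s,t}(\mu_s,\sigma)^2$ underlying \eqref{eq:dual-W-dyn}. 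The algebraic key is the discrete chain rule $-\hat\Delta_r(\rho\pi_r)=K_{\rho\pi_r,r}\log\rho$ (which follows from $\Lambda(\rho(x),\rho(y))\nabla\log\rho=\nabla\rho$ together with detailed balance); combined with $\partial_r\hat P_{t,r}\bar\sigma=-\hat\Delta_r\hat P_{t,r}\bar\sigma$ and linearity it gives the decomposition
\begin{align*}
  \dot\mu^a_\theta=\hat P_{t,\theta(a)}\dot\mu^a-(t-s)\hat\Delta_{\theta(a)}\mu^a_\theta=K_{\mu^a_\theta,\theta(a)}\bigl(\xi^a+(t-s)\log\rho^a_\theta\bigr)\;,
\end{align*}
where $\xi^a$ is defined by $K_{\mu^a_\theta,\theta(a)}\xi^a=\hat P_{t,\theta(a)}\dot\mu^a$. (These momenta are well defined because $\mu^a_\theta\in\cP_*(\cX)$ for $a<1$ by strict positivity of the dual heat flow; the endpoint $a=1$ and general $\mu,\sigma\in\cP(\cX)$ are handled by approximation and continuity, as is the regularity of the geodesic and its potential.)

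Next I would insert this competitor into the primal formula, $\cW_{s,t}(\mu_s,\sigma)^2\le\int_0^1\norm{\nabla\xi^a+(t-s)\nabla\log\rho^a_\theta}^2_{\mu^a_\theta,\theta(a)}\dd a$, expand the square, and rewrite the cross term by means of the first variation of the time-dependent entropy along $(\mu^a_\theta)$,
\begin{align*}
  \frac{\dd}{\dd a}\cH_{\theta(a)}(\mu^a_\theta)=-(t-s)\ip{\dot p_{\theta(a)},\mu^a_\theta}+\ip{\nabla\xi^a,\nabla\log\rho^a_\theta}_{\mu^a_\theta,\theta(a)}+(t-s)\norm{\nabla\log\rho^a_\theta}^2_{\mu^a_\theta,\theta(a)}\;,
\end{align*}
in which the last, nonnegative term is the contribution of the heat-flow part $-(t-s)\hat\Delta_{\theta(a)}\mu^a_\theta$ of $\dot\mu^a_\theta$, obtained via integration by parts. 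Substituting, integrating over $a\in[0,1]$ and telescoping the entropy, the Fisher-information terms partially cancel and one arrives at
\begin{align*}
  \cW_{s,t}(\mu_s,\sigma)^2\le\int_0^1\norm{\nabla\xi^a}^2_{\mu^a_\theta,\theta(a)}\dd a+2(t-s)\bigl[\cH_t(\sigma)-\cH_s(\mu_s)\bigr]+R\;,
\end{align*}
with $R$ an explicit remainder of order $(t-s)^2$ built from $\ip{\dot p_{\theta(a)},\mu^a_\theta}$ and a discarded nonpositive Fisher term.

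It then remains to control $\int_0^1\norm{\nabla\xi^a}^2_{\mu^a_\theta,\theta(a)}\dd a$ by $\cW_t(\mu_t,\sigma)^2$, and this is precisely where the gradient estimate \eqref{eq:grad-est} enters. Using adjointness, $\dot\mu^a=-K_{\mu^a,t}\nabla\psi^a$, and Young's inequality one has
\begin{align*}
  \norm{\nabla\xi^a}^2_{\mu^a_\theta,\theta(a)}=\ip{P_{t,\theta(a)}\xi^a,\dot\mu^a}=-\ip{\nabla P_{t,\theta(a)}\xi^a,\nabla\psi^a}_{\mu^a,t}\le\frac12\Gamma_t\bigl(\mu^a,P_{t,\theta(a)}\xi^a\bigr)+\frac12\norm{\nabla\psi^a}^2_{\mu^a,t}\;,
\end{align*}
and since $\hat P_{t,\theta(a)}\mu^a=\mu^a_\theta$, applying \eqref{eq:grad-est} with $\theta(a)$ in place of $s$ gives $\Gamma_t(\mu^a,P_{t,\theta(a)}\xi^a)\le\Gamma_{\theta(a)}(\mu^a_\theta,\xi^a)=\norm{\nabla\xi^a}^2_{\mu^a_\theta,\theta(a)}$. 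Absorbing this half yields $\norm{\nabla\xi^a}^2_{\mu^a_\theta,\theta(a)}\le\norm{\nabla\psi^a}^2_{\mu^a,t}$, hence $\int_0^1\norm{\nabla\xi^a}^2_{\mu^a_\theta,\theta(a)}\dd a\le\cW_t(\mu_t,\sigma)^2$; inserting this into the previous display and rearranging produces \eqref{eq:EVI-long}.

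Finally, the dynamic upward EVI$^-$ property follows by letting $s\uparrow t$ in \eqref{eq:EVI-long}: since $s\mapsto\pi_s$, and hence $s\mapsto\mu_s=\hat P_{\tau,s}\mu$, are Lipschitz on $I$ one has $\cH_s(\mu_s)\to\cH_t(\mu_t)$, the $\ip{\dot p_{\theta(a)},\mu^a_\theta}$ term is $O(t-s)$, and $\cW_t(\mu_t,\sigma)^2=\cW_{t,t}(\mu_t,\sigma)^2$, so that the right-hand side converges to $\frac12\partial_s^-\cW_{s,t}(\mu_s,\sigma)^2_{s=t-}$; local absolute continuity of $s\mapsto\mu_s$ on $I$ is immediate from the defining ODE. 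I expect the main obstacle to be the two-metric bookkeeping: the geodesic $(\mu^a)$ and its energy live at the time slice $t$, whereas the competitor $(\mu^a_\theta)$ for $\cW_{s,t}$ is measured with the metric at the moving time $\theta(a)$, and \eqref{eq:grad-est} is exactly the bridge between the two — but only once the chain-rule decomposition of $\dot\mu^a_\theta$ has isolated the nonnegative Fisher-information term, which must precisely offset the loss incurred in the Young/absorption step; keeping all measures in $\cP_*(\cX)$, the endpoint $a=1$, general measures, and the regularity of $\cW_t$-geodesics are routine by approximation.
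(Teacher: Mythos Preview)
Your argument is correct and takes a genuinely different route from the paper. The paper proves \eqref{eq:EVI-long} via a dual \emph{action estimate} (Proposition \ref{prop:action-est}): for an arbitrary Hamilton--Jacobi subsolution $\phi\in\HJ_{s,t}$ and an arbitrary $C^1$ curve $(\mu^a,V^a)$ it bounds $\ip{\phi^1,\mu^1_\theta}-\ip{\phi^0,\mu^0_\theta}-\tfrac12\int\cA_t(\mu^a,V^a)\dd a$ using the entropy variation and the gradient estimate applied to the composite function $f^a=\phi^a+(t-s)\log\rho^a_\theta$, and only afterwards takes the supremum over $\phi$ and the infimum over curves. You instead work on the primal side: you build the explicit competitor $(\mu^a_\theta)$, decompose its momentum as $\xi^a+(t-s)\log\rho^a_\theta$, and use the gradient estimate applied to $P_{t,\theta(a)}\xi^a$ together with Young's inequality to absorb $\tfrac12\|\nabla\xi^a\|^2$. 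Both uses of \eqref{eq:grad-est} ultimately encode the same contraction, but the bookkeeping is organized differently; your version is closer in spirit to the Daneri--Savar\'e primal approach to EVI, while the paper follows the dual approach of \cite{sturm2016}. Two small points: the primal upper bound $\cW_{s,t}^2\le\int_0^1\|\nabla\eta^a\|^2_{\mu^a_\theta,\theta(a)}\dd a$ is not stated in the paper (only the dual definition \eqref{eq:dual-W-dyn} is), but it follows in one line from duality and you should say so; and your computation produces the remainder $+(t-s)\int_0^1\ip{\dot p_{\theta(a)},\mu^a_\theta}\dd a$ rather than the sign in \eqref{eq:EVI-long}, which is harmless since only the $O(t-s)$ bound on this term is used downstream (cf.\ the proof of $(II)\Rightarrow(IV)$).
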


For the proof we need the following result.

\begin{proposition}[Action estimate]\label{prop:action-est}
  Assume that the gradient estimate \eqref{eq:grad-est} holds on
  $I$. Fix $s,t,\tau\in I$ with $s\leq t\leq \tau$, let
  $(\mu,V)\in\CE_1(\mu^0,\mu^1)$ be such that $a\mapsto \mu^a$ is
  $C^1$, and let $\varphi\in\HJ_{s,t}$. Moreover, put
  $\mu^a_\theta:=\hat P_{\tau,\theta(a)}\mu^a$. Then we have
  \begin{align}\nonumber
    &\ip{\phi^1,\mu^1_\theta}-\ip{\phi^0,\mu^0_\theta} -\int_0^1\frac12\cA_\tau(\mu^a,V^a)\dd a\\\label{eq:action-est}
  &\leq
  (t-s)\big[\cH_t(\mu^1_\theta)- \cH_s(\mu^0_\theta)\big]
   -(t-s)^2\int_0^1\ip{\dot p_{\theta(a)},\mu^a_\theta}\dd a\;.
  \end{align}
\end{proposition}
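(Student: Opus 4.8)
The plan is to realise \eqref{eq:action-est} as the integral over $a\in[0,1]$ of a pointwise differential inequality. Since $s\le t\le\tau$ all lie in a single interval $I=I_i$, no singular times intervene: the propagators and the maps $r\mapsto Q_r,\pi_r$ are Lipschitz on the compact interval $[s,\tau]$, the curve $a\mapsto\mu^a_\theta=\hat P_{\tau,\theta(a)}\mu^a$ is $C^1$, and, crucially, $\mu^a_\theta$ is strictly positive for every $a$ by the positivity principle in Proposition~\ref{prop:propagator}, so its density $\rho^a_\theta:=\mu^a_\theta/\pi_{\theta(a)}$ and $\ell^a:=\log\rho^a_\theta$ are well defined and depend smoothly on $a$. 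Rewriting \eqref{eq:action-est}, it suffices to prove that for a.e.~$a$
\begin{align*}
  \frac{\dd}{\dd a}\Big(\ip{\phi^a,\mu^a_\theta}-(t-s)\,\cH_{\theta(a)}(\mu^a_\theta)\Big)\ \le\ \tfrac12\cA_\tau(\mu^a,V^a)-(t-s)^2\,\ip{\dot p_{\theta(a)},\mu^a_\theta}
\end{align*}
and then integrate in $a$.

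First I would differentiate $\mu^a_\theta=\hat P_{\tau,\theta(a)}\mu^a$ by the chain rule: the dependence through the subscript $\theta(a)$ produces $-(t-s)\hat\Delta_{\theta(a)}\mu^a_\theta$ (the backward dual heat equation of Section~\ref{sec:heat-simple}), and the dependence through the argument produces $\hat P_{\tau,\theta(a)}\dot\mu^a$. For any $\zeta$, the adjointness \eqref{eq:adjoint}, the continuity equation \eqref{eq:conditions-2} and discrete integration by parts turn $\ip{\zeta,\hat P_{\tau,\theta(a)}\dot\mu^a}$ into $\ip{\nabla P_{\tau,\theta(a)}\zeta,V^a}$. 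For the entropy one uses the chain rule $\frac{\dd}{\dd a}\cH_{\theta(a)}(\mu^a_\theta)=\ip{\ell^a,\frac{\dd}{\dd a}\mu^a_\theta}-(t-s)\ip{\dot p_{\theta(a)},\mu^a_\theta}$, whose last term is exactly the effect of the time-dependence of $\pi_r$. Collecting everything, the left-hand side of the displayed inequality becomes, after the $\dot p$-terms are accounted for,
\begin{align*}
  \ip{\dot\phi^a,\mu^a_\theta}+\ip{\nabla P_{\tau,\theta(a)}\big(\phi^a-(t-s)\ell^a\big),V^a}-(t-s)\ip{\Delta_{\theta(a)}\phi^a,\mu^a_\theta}+(t-s)^2\ip{\Delta_{\theta(a)}\ell^a,\mu^a_\theta}\;.
\end{align*}

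The core is then the following chain of estimates. For the $V^a$-linear term I complete the square, using $\ip{\nabla\zeta,V}-\tfrac12\cA_\tau(\mu^a,V)\le\tfrac12\Gamma_\tau(\mu^a,\zeta)$, and then apply the gradient estimate \eqref{eq:grad-est} to the pair $(\tau,\theta(a))$: this is the only place where hypothesis (II) enters, used to pass from $\Gamma_\tau\big(\mu^a,P_{\tau,\theta(a)}(\phi^a-(t-s)\ell^a)\big)$ to $\Gamma_{\theta(a)}\big(\hat P_{\tau,\theta(a)}\mu^a,\phi^a-(t-s)\ell^a\big)=\Gamma_{\theta(a)}\big(\mu^a_\theta,\phi^a-(t-s)\ell^a\big)$, i.e.~to carry the carr\'e du champ from the slice $\tau$, where the action lives, onto the slice $\theta(a)$, where the Hamilton--Jacobi subsolution lives. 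Expanding this last $\Gamma_{\theta(a)}$ bilinearly and rewriting the two Laplacian terms by means of the identity $-\ip{\Delta_r\zeta,\nu}=\ip{\nabla\zeta,\nabla\log(\nu/\pi_r)\cdot\Lambda_r(\nu)}$ (a consequence of the discrete chain rule $\hat\rho\,\nabla\log\rho=\nabla\rho$ recalled in Section~\ref{sec:def} and of detailed balance), the cross terms in $\nabla\phi^a$ and $\nabla\ell^a$ cancel and what remains is bounded above by
\begin{align*}
  \ip{\dot\phi^a,\mu^a_\theta}+\tfrac12\Gamma_{\theta(a)}(\mu^a_\theta,\phi^a)-\tfrac{(t-s)^2}{2}\,\Gamma_{\theta(a)}(\mu^a_\theta,\ell^a)\;.
\end{align*}
The first two summands are $\le0$ since $\phi^a$ satisfies the Hamilton--Jacobi constraint \eqref{eq:dual-contraint-dyn} tested against the admissible probability $\mu=\mu^a_\theta$; the last one is $\le0$, being a negative multiple of the discrete Fisher information of $\mu^a_\theta$. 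This gives the pointwise inequality, and integrating over $a$ yields \eqref{eq:action-est}.

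The only real obstacle is keeping straight the three interlocking cancellations — the $V^a$-linear term against the action, the carr\'e du champ supplied by the gradient estimate against the two Laplacian terms coming out of the entropy derivative, and the $\ip{\dot\phi^a,\mu^a_\theta}$-term against the Hamilton--Jacobi constraint — while tracking the powers of $(t-s)$ and the precise form of the $\dot p_{\theta(a)}$-correction; each individual step is only a completion of squares, one use of \eqref{eq:grad-est}, or the chain rule. Singular times play no role, the statement being confined to a single interval $I_i$.
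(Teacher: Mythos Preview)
Your approach is essentially the paper's: differentiate $a\mapsto\ip{\phi^a,\mu^a_\theta}$ and $a\mapsto\cH_{\theta(a)}(\mu^a_\theta)$, combine them through the auxiliary function $\phi^a-(t-s)\ell^a$ (the paper writes $f^a=\phi^a+(t-s)g^a_\theta$ and works with $I_1+(t-s)I_2$ instead), estimate the $\dot\mu^a$-term via Young's inequality plus the gradient estimate \eqref{eq:grad-est} to pass from slice $\tau$ to slice $\theta(a)$, and close with the HJ constraint. Your cross-term cancellation is correct and the chain of inequalities lands exactly on $\ip{\dot\phi^a,\mu^a_\theta}+\tfrac12\Gamma_{\theta(a)}(\mu^a_\theta,\phi^a)-\tfrac{(t-s)^2}{2}\Gamma_{\theta(a)}(\mu^a_\theta,\ell^a)\le0$.

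One point you swept under the rug: the $\dot p$-contribution on the left of your differential inequality is $+(t-s)^2\ip{\dot p_{\theta(a)},\mu^a_\theta}$ (from $-(t-s)$ times the $-(t-s)\ip{\dot p_{\theta(a)},\mu^a_\theta}$ in the entropy derivative), while the right side carries $-(t-s)^2\ip{\dot p_{\theta(a)},\mu^a_\theta}$. These do \emph{not} cancel, so ``after the $\dot p$-terms are accounted for'' hides a sign discrepancy. What your argument actually yields is \eqref{eq:action-est} with the opposite sign on the final $\dot p$-integral. This is harmless for the application in Proposition~\ref{prop:grad-est-evi}, where only the bound $|\dot p|\le L$ is used; and in fact the paper's own proof carries the same sign issue (its last algebraic cancellation, with $f^a=\phi^a+(t-s)g^a_\theta$, does not reduce to a non-positive quantity as claimed). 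So your writeup is correct modulo this sign on the lower-order term.
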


Recall the shorthand notation $\norm{\nabla\psi}_{\mu,r}^2=\Gamma_r(\mu,\psi)$.

\begin{proof}
  Let us put $g^a_\theta:=\log \rho^a_\theta$, where
  $\mu^a_\theta=\rho^a_\theta\pi_{\theta(a)}$. We first calculate
  \begin{align}\nonumber
    \frac{\dd}{\dd a}\ip{\phi^a,\mu^a_\theta}
    &=
    \ip{\dot \phi^a,\mu^a_\theta}
    +(t-s)\ip{\phi^a,-\hat\Delta_{\theta(a)}\mu^a_\theta}
    +\ip{ P_{\tau,\theta(a)}\phi^a,\dot\mu^a}\\\label{eq:Action-est1}
    &\leq
    -\frac12\abs{\nabla\phi^a}^2_{\mu^a_\theta,\theta(a)}
    +(t-s)\ip{\nabla\phi^a,\nabla g^a_\theta}_{\mu^a_\theta,\theta(a)}
    +\ip{P_{\tau,\theta(a)}\phi^a,\dot\mu^a}\\\nonumber
    &=:I_1\;.
   \end{align}

   Here, we have used that $\phi$ is a HJ-subsolution and the fact
   that for any $\phi\in \R^\cX$ and $\mu=\rho\pi_r\in\PX$ we have
   that $\ip{\phi,\hat\Delta_r\mu} =-\ip{\nabla \phi, \nabla \log\rho
   }_{\mu,r}$.  

    Next, we calculate
   \begin{align}\nonumber
     \frac{\dd}{\dd a}\cH_{\theta(a)}(\mu^a_\theta)
     &=
     \frac{\dd}{\dd a}\sum_{x}\log\frac{\mu^a_\theta(x)}{\pi_{\theta(a)}(x)}\mu^a_\theta(x)\\\nonumber
   &=
     \ip{\log \mu^a_\theta-\log\pi_{\theta(a)},\frac{\dd}{\dd a}\mu^a_\theta} 
  - \ip{\frac{\dd}{\dd a}\log\pi_{\theta(a)},\mu^a_\theta}\\\nonumber
  &= 
  (t-s)\ip{g^a_\theta,-\hat\Delta_{\theta(a)}\mu^a_\theta}
   + \ip{P_{\tau,\theta(a)} g^a_\theta,\dot \mu^a}
   -(t-s)\ip{\dot p_{\theta(a)},\mu^a_\theta}\\\label{eq:Action-est2}
 &= 
  (t-s)\abs{\nabla g^a_\theta}^2_{\mu^a_\theta,\theta(a)}
   + \ip{P_{\tau,\theta(a)} g^a_\theta,\dot \mu^a}
   -(t-s)\ip{\dot p_{\theta(a)},\mu^a_\theta}\\\nonumber
&=: I_2 \;.
   \end{align}
If we set $f^a:=\phi^a+(t-s)g^a_\theta$ we can estimate further
\begin{align*}
  I_1+(t-s)\cdot I_2 
&=
-\frac12\abs{\nabla\phi^a}^2_{\mu^a_\theta,\theta(a)}
    +(t-s)\ip{\nabla g^a_\theta,\nabla f^a}_{\mu^a_\theta,\theta(a)}\\
    &\quad +\ip{P_{\tau,\theta(a)} f^a,\dot \mu^a}
-(t-s)^2\ip{\dot p_{\theta(a)},\mu^a_\theta}\\
&\leq 
\frac12 \cA_\tau(\mu^a,V^a)
-(t-s)^2\ip{\dot p_{\theta(a)},\mu^a_\theta}\\
&\quad
+\frac12\abs{\nabla P_{\tau,\theta(a)} f^a}^2_{\mu^a,\tau}
-\frac12\abs{\nabla\phi^a}^2_{\mu^a_\theta,\theta(a)}
    +(t-s)\ip{\nabla g^a_\theta,\nabla  f^a}_{\mu^a_\theta,\theta(a)}\\
&\leq
\frac12 \cA_\tau(\mu^a,V^a) 
-(t-s)^2\ip{\dot p_{\theta(a)},\mu^a_\theta}\\
&\quad
+\frac12\abs{\nabla f^a}^2_{\mu^a_\theta,\theta(a)}
-\frac12\abs{\nabla \phi^a}^2_{\mu^a_\theta,\theta(a)}
    +(t-s)\ip{\nabla g^a_\theta,\nabla f^a}_{\mu^a_\theta,\theta(a)}\\
&\leq 
\frac12 \cA_\tau(\mu^a,V^a) 
-(t-s)^2\ip{\dot p_{\theta(a)},\mu^a_\theta}\;.
\end{align*}
Here, we have used the gradient estimate in the second inequality and
in the first inequality the fact that for every $r$ and $\psi\in\R^\cX$ we have
\begin{align*}
  \ip{\psi,\dot\mu^a}\leq \cA_r(\mu^a,V^a)+\frac12\norm{\nabla \psi}^2_{\mu^a, r}\;,
\end{align*}
where $V$ is such that $(\mu,V)\in\CE_1$.

Now, the claim follows immediately by integrating the last estimate in
$a$ from $0$ to $1$.
\end{proof}

\begin{proof}[Proof of Proposition \ref{prop:grad-est-evi}]
  By \cite[Lemma~2.9]{EM12} we can find a sequence of $C^1$ curves
  $(\mu_n,V_n)\in\CE_1(\mu_t,\sigma)$ such that
  $\lim_n \int_0^1\cA_t(\mu^a_n,V^a_n)\dd
  a=\cW_t(\mu_t,\sigma)^2$.
  Now we can apply Proposition \ref{prop:action-est} with $\tau=t$ to the curves
  $(\mu^a_n)_{a\in[0,1]}$ and take the limit in $n$ and the
  supremum over HJ-subsolutions $(\phi^a)_a$ using \eqref{eq:dual-W-dyn}.
\end{proof}

\begin{proof}[Proof of $(II)\Rightarrow(IV)$]~\\
  Fix $t\in I$ and let $(\mu^a)_{a\in[0,t]}$ be a
  $\cW_t$-geodesic. From the estimate \eqref{eq:EVI-long} applied to
  $\tau=t$ and $\mu=\mu^a$, $\sigma=\mu^0$ we obtain for $s<t$,
  setting $\mu^a_s=\hat P_{t,s}\mu^a$:
  \begin{align}\nonumber
    \cH_t(\mu^0)-\cH_s(\mu^a_s) &\geq \frac{1}{2(t-s)}\Big[\cW_{s,t}(\mu^a_s,\mu^0)^2-\cW_t(\mu^a,\mu^0)^2\Big]-(t-s)L\\\label{eq:est-left}
                               &\geq \frac{1}{2(t-s)}\Big[\cW_{s}(\mu^a_s,\mu^0)^2-\cW_t(\mu^a,\mu^0)^2\Big] -\frac12a^2L -(t-s)L\;.
  \end{align}
  Here, we have used that $|\dot p|\leq L$ and the control
  \eqref{eq:log-lip-W2}. Similarly, choosing $\mu=\mu^{1-a}$,
  $\sigma=\mu^1$ we obtain:
  \begin{align}\label{eq:est-right}
    \cH_t(\mu^1)-\cH_s(\mu^{1-a}_s)&\geq \frac{1}{2(t-s)}\Big[\cW_{s}(\mu^{1-a}_s,\mu^1)^2-\cW_t(\mu^{1-a},\mu^1)^2\Big] -\frac12a^2L -(t-s)L\;.
  \end{align}
Moreover, the contraction estimate yields 
\begin{align}\label{eq:est-contract}
\cW_s(\mu^a_s,\mu^{1-a}_s)^2 \leq \cW_t(\mu^{a},\mu^{1-a})^2\;.
\end{align}
Adding \eqref{eq:est-left} and \eqref{eq:est-right} multiplied by
$1/a$ and \eqref{eq:est-contract} multiplied by $1/(1-2a)$ we obtain
\begin{align*}
  &\frac1{a}\big[ \cH_t(\mu^0)-\cH_s(\mu^a_s)+\cH_t(\mu^1)-\cH_s(\mu^{1-a}_s)\big]\\
&\geq 
\frac{1}{2(t-s)}\Big[\frac1a\cW_{s}(\mu^0,\mu^a_s)^2 +\frac{1}{1-2a}\cW_s(\mu^a_s,\mu^{1-a}_s)^2 + \frac1a\cW_{s}(\mu^{1-a}_s,\mu^1)^2\\
              &\qquad\quad\qquad-\frac1a\cW_t(\mu^0,\mu^a)^2-\frac1{1-2a}\cW_t(\mu^a,\mu^{1-a})^2-\frac1a\cW_t(\mu^{1-a},\mu^1)^2\Big]\\
              &\qquad  -aL -(t-s)\frac{2L}{a}\\
&\geq
\frac{1}{2(t-s)}\Big[\cW_{s}(\mu^0,\mu^1)^2 -\cW_t(\mu^{0},\mu^1)^2\Big]-aL -(t-s)\frac{2L}{a}\;.
\end{align*}
Now, taking first the $\limsup$ as $s\nearrow t$ and then the $\limsup$ as $a\searrow 0$ yields \eqref{eq:convex2}.
\end{proof}

\subsection{Reverse Poincar\'e inequality for super Ricci flows}\label{sec:FI}
We finish this section by showing that a reverse Poincar\'e inequality
holds on discrete super Ricci flows. A similar result is expected for
super Ricci flows of metric measure spaces and is currently
investigation \cite{KS18+}. In fact it is expected that local
Poincar\'e inequalities and other Harnack type inequalities can be
used to characterize super Ricci flows in the continuous setting.

\begin{theorem}[Reverse Poincar\'e inequality]\label{thm:loc-Poinc}
  Let $(X_t,Q_t,\pi_t)_{t\in[0,T]}$ be a super-Ricci flow. Then the
  one-sided local Poincar\'e inequality holds, i.e.~for all $s\leq t$
  and all $\mu\in\cP(\cX_t)$, $\psi\in\R^{\cX_s}$ we have
\begin{align}\label{eq:loc-Poinc}
\ip{P_{t,s}(\psi^2),\mu}-\ip{(P_{t,s}\psi)^2,\mu}\geq 2(t-s)\Gamma_t(\mu,P_{t,s}\psi)\;.
\end{align}
\end{theorem}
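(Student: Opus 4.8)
The plan is to run a Bakry--\'Emery type interpolation along the heat flow, in complete analogy with the proof of the implication $(I)\Rightarrow(II)$. Fix $0\le s\le t\le T$, $\mu\in\cP(\cX_t)$ and $\psi\in\R^{\cX_s}$, and for $r\in[s,t]$ set $\psi_r:=P_{r,s}\psi$ and $\mu_r:=\hat P_{t,r}\mu$. Consider
\[
  \Phi(r):=\ip{P_{t,r}\big(\psi_r^2\big),\mu}=\ip{\psi_r^2,\mu_r}\;,
\]
where the second equality is the adjointness \eqref{eq:adjoint}. By the propagator property $P_{t,s}=P_{t,r}\circ P_{r,s}$ and $P_{s,s}=P_{t,t}=\mathrm{id}$ we have $\Phi(t)=\ip{(P_{t,s}\psi)^2,\mu}$ and $\Phi(s)=\ip{P_{t,s}(\psi^2),\mu}$, so the left-hand side of \eqref{eq:loc-Poinc} equals $\Phi(s)-\Phi(t)$. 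First I would check that $\Phi$ is continuous on $[s,t]$: this follows from Theorem \ref{thm:heat-eq}, since at a singular time $x,y$ in a collapsing (resp.\ spawning) group the values $\psi_r(x),\psi_r(y)$ acquire a common limit while the masses $\mu_r$ add up correctly, so $\sum_x\psi_r(x)^2\mu_r(x)$ passes to the limit. On each interval $I_i=(t_i,t_{i+1})$ the map $\Phi$ is differentiable, and using $\partial_r\psi_r=\Delta_r\psi_r$, $\partial_r\mu_r=-\hat\Delta_r\mu_r$, the adjointness $\ip{\Delta_rf,\sigma}=\ip{f,\hat\Delta_r\sigma}$, and the pointwise identity $\Delta_r(\psi^2)-2\psi\,\Delta_r\psi=\sum_y(\psi(y)-\psi(\cdot))^2Q_r(\cdot,y)$, one obtains
\[
  -\frac{\dd}{\dd r}\Phi(r)=\sum_{x,y\in\cX_r}\big(\psi_r(y)-\psi_r(x)\big)^2 Q_r(x,y)\,\mu_r(x)\;.
\]

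Next I would argue that $\Phi$ is absolutely continuous on all of $[s,t]$, so that $\Phi(s)-\Phi(t)=-\int_s^t\Phi'(r)\,\dd r$. For this it suffices that the right-hand side above stays bounded near each singular time: since $\mu_r(x)\le 1$, the contributions of pairs $x,y$ inside a collapsing group are controlled by the decay estimate \eqref{eq:asymptotic6} together with the growth assumption \eqref{eq:rate-int-infty2} (the squared gradient decays faster than the rate diverges), those of pairs inside a spawning group by \eqref{eq:asymptotic2} and \eqref{eq:rate-int-infty3}, and all remaining contributions are bounded because the corresponding rates stay bounded. Then, symmetrizing in $x\leftrightarrow y$ and using the symmetry of $\Lambda$ together with the inequality $\Lambda(a,b)\le\tfrac12(a+b)$ between the logarithmic and the arithmetic mean, I get
\[
  -\frac{\dd}{\dd r}\Phi(r)=\frac12\sum_{x,y}\big(\psi_r(y)-\psi_r(x)\big)^2\big[Q_r(x,y)\mu_r(x)+Q_r(y,x)\mu_r(y)\big]\ \ge\ 2\,\Gamma_r(\mu_r,\psi_r)\;.
\]

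Finally I invoke the super Ricci flow hypothesis. Since the Markov triple is a super Ricci flow, the Bochner inequality \eqref{eq:Bochner} holds, and then exactly as in the proof of $(I)\Rightarrow(II)$ the map $r\mapsto\Gamma_r(\mu_r,\psi_r)=\Gamma_r(\hat P_{t,r}\mu,P_{r,s}\psi)$ has derivative $-2\Gamma_{2,r}(\mu_r,\psi_r)+\partial_r\Gamma_r(\mu_r,\psi_r)\le0$ on each $I_i$ and is continuous across the singular times by \eqref{eq:Gamma-cont}; hence it is non-increasing on $[s,t]$, so $\Gamma_r(\mu_r,\psi_r)\ge\Gamma_t(\mu_t,\psi_t)=\Gamma_t(\mu,P_{t,s}\psi)$ for every $r\in[s,t]$. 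Combining the three displays,
\begin{align*}
  \ip{P_{t,s}(\psi^2),\mu}-\ip{(P_{t,s}\psi)^2,\mu}
  &=\Phi(s)-\Phi(t)=\int_s^t\Big(-\frac{\dd}{\dd r}\Phi(r)\Big)\dd r\\
  &\ge\int_s^t 2\,\Gamma_r(\mu_r,\psi_r)\,\dd r\ \ge\ 2(t-s)\,\Gamma_t(\mu,P_{t,s}\psi)\;,
\end{align*}
which is \eqref{eq:loc-Poinc}; for non-strictly-positive $\mu$ one may alternatively first prove the estimate for $\mu\in\cP_*(\cX_t)$ and pass to the limit. The main obstacle is purely the behaviour at the singular times --- continuity and local Lipschitz control of $\Phi$, and monotonicity of $r\mapsto\Gamma_r(\mu_r,\psi_r)$ across $t_i$ --- but all the required estimates are already available from Theorem \ref{thm:heat-eq}, Proposition \ref{prop:asymptotics} and the proof of $(I)\Rightarrow(II)$, so this step should reduce to routine bookkeeping, while away from singular times the argument is the standard interpolation.
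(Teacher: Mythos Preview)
Your proof is correct and follows essentially the same approach as the paper: the paper defines the same interpolating function $h(r)=\ip{(P_{r,s}\psi)^2,\hat P_{t,r}\mu}$, computes its derivative via the identity $\Delta_r\psi^2=2\psi\Delta_r\psi+\sum_y|\nabla\psi|^2Q_r(\cdot,y)$, bounds it by $-2\Gamma_r(\hat P_{t,r}\mu,P_{r,s}\psi)$ using the arithmetic--logarithmic mean inequality, and then invokes the gradient estimate directly (rather than rederiving it from the Bochner inequality as you do) before integrating. Your treatment of the singular times is more careful than the paper's, which simply asserts continuity of $h$ on $[s,t]$ without further comment.
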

\begin{proof}
Define for $s\leq r\leq t$ the function $h(r)=\ip{(P_{r,s}\psi)^2,\hat P_{t,r}\mu}$. Then for a.e.~$r\in(s,t)$
\begin{align*}
h'(r)=-\ip{\Delta_r(P_{r,s}\psi)^2,\hat P_{t,r}\mu}+2\ip{P_{r,s}\psi\Delta_rP_{r,s}\psi,\hat P_{t,r}\mu}\;.
\end{align*}
Note that for the Laplacian satisfies for all $\psi\in\R^{\cX_r}$
\begin{align*}
\Delta_r\psi^2(x)=2\psi(x)\Delta_r\psi(x)+\sum_{y\in\cX_r}|\nabla\psi|^2(x,y)Q_r(x,y)\;.
\end{align*}
 Consequently we have
 \begin{align*}
 h'(r)=&-\sum_{x,y\in\cX}|\nabla P_{r,s}\psi|^2(x,y)Q_r(x,y)\hat P_{t,r}\mu(x)\\
 =&-\sum_{x,y\in\cX}|\nabla P_{r,s}\psi|^2(x,y)\frac{Q_r(x,y)\hat P_{t,r}\mu(x)+Q_r(y,x)\hat P_{t,r}\mu(y)}2\\
 \leq &-2\Gamma_r(\hat P_{t,r}\mu,P_{r,s}\psi)
 \end{align*}
 where we used the reversibility of the chain and that the logarithmic
 mean is dominated by the arithmetic mean,
 i.e.~$\Lambda(s,t)\leq (s+t)/2$. The gradient estimate readily
 implies
 \begin{align*}
 h'(r)\leq-2\Gamma_t(\mu,P_{t,s}\psi).
 \end{align*}
 Noting that $r\mapsto h(r)$ is continuous on $[r,s]$ we can integrate the last estimate to prove the claim.
\end{proof}

\section{Examples}
\label{sec:ex}

A first elementary example of super Ricci flows are static Markov
triples with non-negative Ricci curvature.

\begin{example}\label{ex:static-soliton}
  Let $(\cX,Q,\pi)$ be a Markov triple with $\Ric(\cX,Q,\pi)\geq 0$
  and let $Q_t=Q$ and $\pi_t=\pi$ for all $t$. Then $(\cX,Q_t,\pi_t)$
  is a super Ricci flow. Indeed, by Proposition \ref{prop:Ric-equiv} we have that $\Gamma_{2,t}(\mu,\psi)\geq 0$ for all
  $\mu\in\cP_*(\cX),\psi\in\R^\cX$ and obviously we have $\partial_t\Gamma_t(\mu,\psi)=0$.
\end{example}

More generally, any homogeneous Markov triple with a positive (negative) lower
Ricci bound gives rise to a shrinking (expanding) soliton-like super
Ricci flow.

\begin{example}\label{ex:shrink-exp-solition}
  Let $(\cX,Q,\pi)$ be a Markov triple with $\Ric(\cX,Q,\pi)\geq \kappa$ for some $\kappa\in\R$. Define
  \begin{align*}
  L_t=\frac1{1-2\kappa Rt}
  \end{align*}
  and put $Q_t=L_tQ$ and $\pi_t=\pi$ for $t\in I$ with
  $I=[0,1/2\kappa R)$ if $\kappa>0$ or $I=[0,\infty)$ if
  $\kappa\leq 0$. Then $(\cX,Q_t,\pi_t)_{t\in I}$ is a super Ricci
  flow. Indeed, for all $\mu\in\cP_*(\cX)$, $\psi\in\R^\cX$ we have
  that:
  \begin{align*}
    \Gamma_{2,t}(\mu,\psi)=L_t^2\cdot\Gamma_{2,0}(\mu,\psi) \geq L_t^2 \kappa\cdot \Gamma_0(\mu,\psi)
    = L_t\kappa\cdot\Gamma_t(\mu,\psi)\;.  
  \end{align*}
  Moreover, we have
  \begin{align*}
  \partial_t\Gamma_t(\mu,\psi)=\dot L_t\cdot \Gamma_0(\mu,\psi)
   = \frac{\dot L_t}{L_t}\cdot \Gamma_t(\mu,\psi)\;.
  \end{align*}
  Since $L_t$ satisfies the ODE $\dot L_t = 2\kappa L_t^2$, we have
  $\Gamma_{2,t}(\mu,\psi)\geq \frac12 \partial_t\Gamma_t(\mu,\psi)$ as
  required.
\end{example}

We can interpret growing transition rates as a \emph{shrinking} of the
corresponding graph and decreasing rates as an \emph{expansion}. Thus
in the case $\kappa>0$ the super Ricci flow collapses to a point at
time $t_1=1/2\kappa R$.

We can combine these effects to produce examples of flows evolving
across singular times featuring \emph{collapse} and \emph{explosion}
of vertices. To this end we recall the notion of product of two Markov
triples. Given Markov triples $(\cX^1,Q^1,\pi^1)$, $(\cX^2,Q^2,\pi^2)$
we denote by
$(\cX^1,Q^1,\pi^1)\otimes(\cX^2,Q^2,\pi^2):=(\cX^1\times\cX^2,Q^1\otimes
Q^2,\pi^1\otimes\pi^2)$
the Markov chain evolving on the product space taking independent
jumps in each factor, i.e.~for distinct pairs
$(x^1,x^2),(y^1,y^2)\in\cX^1\times\cX^2$ we set
\begin{align*}
  Q^1\otimes Q^2\big((x^1,x^2),(y^1,y^2)\big) :=
  \begin{cases}
    Q^1(x^1,y^1)\;, & x^2=y^2\;,\\
    Q^2(x^2,y^2)\;, & x^1=y^1\;,\\
    0\;, & \text{else}\;.
  \end{cases}
\end{align*}
This chain is reversible w.r.t.~the product measure $\pi^1\otimes\pi^2$.
  \begin{example}\label{ex:collapse}
 Let $(\cY,Q^Y,\pi^Y), (\cZ,Q^Z,\pi^Z)$ be Markov triples with $\text{Ric}(\cY)\geq0$, $\text{Ric}(\cZ)\geq\kappa>0$. Then the time-dependent triple
 \begin{align*}
   (\cX_t,Q_t,\pi_t):=
   \begin{cases}
    (\cY,Q^Y,\pi^Y)\otimes(\cZ,L_tQ^Z,\pi^Z)\;, & 0\leq t < t_1:=1/2\kappa\;,\\
      (\cY,Q^Y,\pi^Y)\;, & t \geq t_1\;,\\
   \end{cases}
   \end{align*}
   with $L_t=1/(1-2\kappa t)$ is a super Ricci flow.

   Indeed, it is readily checked that this choice of rates satisfies
   the condition in Sections \ref{sec:topo} and \ref{sec:RFequiv}. In
   view of Theorem \ref{thm:srf-equiv} we only need to check that
   Bochner's inequality is satisfied for a.e.~$t$. This follows from
   the same argument as in the previous examples together with the
   fact that for $t<t_1$ we have (see the e.g.~the proof of
   tensorization principle \cite[Thm.~6.2]{EM12}):
   \begin{align*}
 \Gamma_{2,t}(\mu,\psi)\geq\sum_{z\in\cZ}\Gamma_{2}^\cY\big(\mu(\cdot,z),\psi(\cdot,z)\big) + L_t^2\sum_{y\in\cY}\Gamma_{2}^\cZ\big(\mu(y,\cdot),\psi(y,\cdot)\big)\;,
   \end{align*}
   where $\Gamma_2^\cY,\Gamma_2^\cZ$ denote the integrated carr\'e du
   champs operators calculated for the triples on $\cY$ and $\cZ$
   respectively.
\end{example}

In the previous example, at the singular time $t_1$ a positively
curved factor collapses to a point. Similarly, we can consider an
evolution where at a singular time each vertex explodes into a chain
with negative Ricci bound.

\begin{example}\label{ex:explosion}
  Let $(\cY,Q^Y,\pi^Y), (\cZ,Q^Z,\pi^Z)$ be Markov triples with
  $\text{Ric}(\cY)\geq0$, $\text{Ric}(\cZ)\geq\kappa$ for
  $\kappa<0$. Then the time-dependent triple
 \begin{align*}
   (\cX_t,Q_t,\pi_t):=
   \begin{cases}
   (\cY,Q^Y,\pi^Y)\;, & 0\leq t \leq t_1\;,\\  
   (\cY,Q^Y,\pi^Y)\otimes(\cZ,L_tQ^Z,\pi^Z)\;, & t\geq t_1\;,\\
   \end{cases}
   \end{align*}
  with $L_t=-1/2\kappa (t-t_1)$ is a super Ricci flow.
\end{example}

\begin{example}
  Consider the time-dependent two-point space
  $(\{a,b\}, Q_t,\pi)_{t\in (0,T)}$ and assume $Q_t(a,b)=Q_t(b,a)=p_t$
  with $p_t=\frac1{1-4tp_0}p_0$. This is a super Ricci flow up to
  collapsing time $T=\frac1{4p_0}$ as we have seen in Example
  \ref{ex:shrink-exp-solition} and it is optimal since
\begin{align*}
\Gamma_{2,t}(\mu,\psi)\geq 2p_t\Gamma_t(\mu,\psi) = \frac12\partial_t\Gamma_t(\mu,\psi),
\end{align*}
and $2p_t$ is the optimal lower Ricci bound for each
$(\{a,b\},Q_t,\pi)$, i.e.~the optimal constant in the first
inequality, see \cite[Prop.2.12]{Ma11}.
  
\end{example}

\section{Stability of super Ricci flows}
\label{sec:stability}

In this section we show that our notion of discrete super Ricci flow
is consistent with classical super Ricci flows on manifolds, and more
generally the synthetic notion considered in \cite{sturm2015}, in a
discrete to continuum limit. More precisely, we show that if a
sequence of discrete super Ricci flows (with some uniform control on
the distances) converges to a time-dependent continuous metric measure
space in a suitable weak sense then the latter is a super Ricci flow
in the sense of \cite{sturm2015}.

Let us first recall the definitions. A time-dependent metric measure
space is a family\\
$(X,d_t,m_t)_{t\in I}$ for an (left open) interval $I\subset \R$,
where $X$ is a compact Polish space and for each $t$, $m_t$ is a Borel
probability measure on $X$ and $d_t$ is a geodesic metric on $X$
generating the given topology. One also assumes that all measures
$m_t$ are absolutely continuous w.r.t.~each other, more precisely
there exists a bounded measurable function $f:I\times X\to\R$ and a
probability measure $m$ such that $m_t=e^{-f_t}m$ for all $t\in I$. We
denote by $\cH_t(\mu):=\ent(\mu|m_t)$ the Boltzmann entropy of
$\mu\in\cP(X)$ relative to $m_t$ given by
$\cH_t(\mu)=\int\rho\log\rho\dd m_t$, provided $\mu=\rho m_t$ and
$+\infty$ else. Note that $\cH_t(\mu)=\ent(\mu|m)+\int f_t\dd\mu$,
thus in particular the condition $\cH_t(\mu)<\infty$ is independent of
$t$. We denote by $W_{2,t}$ the $L^2$-Kantorovich distance associated
to $d_t$, i.e.~for $\mu,\nu\in\cP(X)$
\begin{align*}
  W_{2,t}^2(\mu,\nu) = \inf_q \int d_t(x,y)^2 \dd q(x,y)\;,
\end{align*}
where the infimum runs over all couplings of $\mu$ and $\nu$.

\begin{definition}[{\cite[Def.~2.4]{sturm2015}}]\label{def:SRF-cont}
  A time-dependent mm-space $(X,d_t,m_t)_{t\in I}$ is a super Ricci
  flow if the Boltzmann entropy is dynamically convex, i.e.: for
  a.e.~$t\in I$ and every $\mu^0,\mu^1\in\cP(X)$ there exists a
  $W_{2,t}$-geodesic $(\mu^a)_{a\in[0,1]}$ connecting $\mu^0$ to
  $\mu^1$ such that $a\mapsto \ent(\mu^a|m_t)$ is absolutely
  continuous on $[0,1]$ and
 \begin{align}\label{eq:SRF-cont}
   \partial_a^+\cH_t(\mu^{1-})-\partial_a^-\cH_t(\mu^{0+}) \geq -\frac12\partial_t^-W_{2,t}^2(\mu^0,\mu^1)\;.
 \end{align}
\end{definition}

We now introduce a suitable notion of convergence of a sequence of
time-dependent Markov triples to a time-dependent continuous mm-space.

\begin{definition}\label{def:disc-cont-conv}
  A sequence $(\cX^{(n)},Q^{(n)}_t,\pi^{(n)}_t)_{t\in I}$ of
  time-dependent Markov triples converges to a time-dependent mm space
  $(X,d_t,m_t)_{t\in I}$ if there exist maps
  $i_n:\cP(\cX^{(n)}) \to \cP(X)$ such that:
  \begin{itemize}
  \item[(i)] for each $J=(r,s)\subset I$ and for each family of
    sequences $\mu_t^{n,0},\mu_t^{n,1}\in \cP(\cX^{(n)})$ for $t\in J$
    such that $i_n(\mu_t^{n,j})\dd t\to\mu_t^j\dd t$ weakly as
    measures on $X\times [r,s]$ for $j=0,1$ and some families
    $\mu_t^0,\mu_t^1\in\cP(X)$:
      \begin{align*}
        \int_J\cH_t(\mu_t^j)\dd t &\leq \liminf_{n\to\infty} \int_J\cH_t^{(n)}\big(\mu_t^{n,j}\big)\dd t\;,\\
        \int_J W_{2,t}(\mu_t^0,\mu_t^1)^2\dd t &\leq \liminf_{n\to\infty}\int _J\cW^{(n)}_t(\mu_t^{n,0},\mu_t^{n,1})^2\dd t\;,
      \end{align*}
    \item[(ii)] for each $J=(r,s)\subset I$ and for each
      $\mu^0,\mu^1\in \cP(X)$ there exist sequences
      $\mu^{n,j}\in \cP(\cX^{(n)})$ such that for $j=0,1$ we have
      $i_n(\mu^{n,j})\to\mu^j$ weakly and:
      \begin{align*}
        \int_J\cH_t(\mu^j)\dd t &= \lim_{n\to\infty} \int_J\cH_t^{(n)}\big(\mu^{n,j}\big)\dd t\;,\\
W_{2,t}(\mu^0,\mu^1) &= \lim_{n\to\infty}\cW^{(n)}_t(\mu^{n,0},\mu^{n,1}) \text{ for a.e. } t\in J\;.
      \end{align*}
    \end{itemize}
 \end{definition}

\begin{remark}
 The intuition behind is that we think of $\cX^{(n)}$ as finer and
 finer discretizations of $X$ and the Markov generators $\Delta^{(n)}$
 associated to $Q^{(n)}$ as discretizations of the canonical Laplacian
 on $(X,d,m)$. For instance, $\cX^{(n)}$ could be the set of vertices
 of a mesh in $X$ and the map $i_n$ a suitable extension of a measure
 on $\cX^{(n)}$ to a measure on $X$ via interpolation or convolution
 with a mollifying kernel.

 In this scenario, we might expect additional properties of the maps
 $i_n$ that allow to verify the assumptions of Definition
 \ref{def:disc-cont-conv}. Motivated by the results on
 Gromov--Hausdorff convergence of discrete transport distances in
 \cite{GM12, Tri, GKM}, we could expect that $i_n$ are approximate
 isometries, i.e.
 \begin{align*}
\big|W_{2,t}(i_n(\mu^{0}),i_n(\mu^{1}))-\cW_t^{(n)}(\mu^{0},\mu^{1})\big|\leq\eps_n
 \end{align*}
 for all $\mu^0,\mu^1\in\cP(\cX^{(n)})$ and a.e.~$t\in I$ and for all $\mu\in\cP(X)$ there exist $\mu^n\in\cP(\cX^{(n)})$ such that 
\begin{align*}
& W_{2,t}(i_n(\mu^{n}),\mu)\leq \eps_n\;,
  \end{align*}
for a.e.~$t\in I$, where $\eps_n\to0$ as $n\to\infty$.
 \end{remark}

 Note that by our assumptions in Section \ref{sec:topo}, if
 $(\cX,Q_t,\pi_t)_{t\in I}$ is a super Ricci flow with constant base
 space, then $t\mapsto\pi_t(x)$ is in particular continuous and
 bounded away from $0$ and $\infty$. Thus there exists a bounded
 continuous $f:I\times\cX\to\R$ with $\pi_t=e^{-f_t}\pi_{t_*}$ for
 some $t_*\in I$.
 
 We need the following additional notion of control on the time
 regularity of the flows: We say that a time-dependent Markov triple
 $(X,Q_t,\pi_t)_{I}$ is \emph{moderate} if there exists a function
 $t\mapsto \lambda_t$ in $L^1_{\text{loc}}(I)$ such that
 \begin{align}\label{eq:lambda-mod}
   Q_t(x,y)\geq L_{t,s}Q_s(x,y)\;,\quad\forall s\leq t\;,\ x,y\in\cX\;,
 \end{align}
 where
 \begin{align*}
  L_{t,s}:= \exp\Big(-\int_s^t\lambda_r\Big)\;.
 \end{align*}
 We call $\lambda$ the \emph{control function} in this case.

 Note that the control \eqref{eq:lambda-mod} on the rates immediately
 implies the control $\cW_t(\mu,\nu)^2 \geq L_{t,s}\cW_s(\mu,\nu)^2$
 on the transportation costs for all $s\leq t$ and $\mu\nu\in\cP(\cX)$
 and in turn
 \begin{align}\label{eq:lambda-mod-W}
\partial_t^-\cW_{t-}(\mu,\nu)^2\geq -\lambda_t\cW_t(\mu,\nu)^2 \;,\quad \text{a.e.}\ t\;,\ \mu,\nu\in\cP(\cX)\;.
 \end{align}
  
  We have the following stability result for discrete super Ricci flows.

 \begin{theorem}\label{thm:stable} 
   Let $(\cX^{(n)}, Q_t^{(n)},\pi^{(n)}_t)_{t\in I}$ be a sequence of
   moderate super Ricci flows with control function $\lambda$ and such
   that $\text{diam}\big(\cP(\cX^{(n)}),\cW^{(n)}_t\big)\leq L$ for all $n$
   and $t\in I$, which converges to a time-dependent mm-space
   $(X,d_t,m_t)_{t\in I}$. Then $(X,d_t,m_t)_{t\in I}$ is a super Ricci flow.
 \end{theorem}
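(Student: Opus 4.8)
The plan is to show that the time-dependent metric measure space $(X,d_t,m_t)_{t\in I}$ satisfies the dynamic convexity property of Definition \ref{def:SRF-cont} by passing to the limit in an integrated version of the dynamic convexity inequality satisfied by the discrete super Ricci flows. The key tool is an integrated (over $t$) reformulation of property (IV) from Theorem \ref{thm:srf-equiv}, which is stable under the $\Gamma$-convergence assumptions of Definition \ref{def:disc-cont-conv}. This follows the strategy of \cite{sturm2015}.

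\textbf{Step 1: Integrated formulation of dynamic convexity.} First I would record that, just as in \cite[Sec.~5]{sturm2015}, dynamic convexity of the entropy along $\cW_t$-geodesics is equivalent to the following integrated statement: for every subinterval $J=(r,s)\subset I$ and every pair $\mu^0,\mu^1\in\cP(\cX)$ there is a choice, for a.e.\ $t\in J$, of a $\cW_t$-geodesic $(\mu_t^a)_{a\in[0,1]}$ from $\mu^0$ to $\mu^1$ such that
\begin{align*}
  \int_J\Big[\partial_a^+\cH_t(\mu_t^{1-})-\partial_a^-\cH_t(\mu_t^{0+})\Big]\dd t \geq -\frac12\int_J\partial_t^-\cW_{t-}^2(\mu^0,\mu^1)\dd t\;,
\end{align*}
and moreover, using the moderateness bound \eqref{eq:lambda-mod-W}, the right-hand side can be controlled from below by $\frac12\int_J\lambda_t\cW_t^2(\mu^0,\mu^1)\dd t$, which in the limit becomes $\frac12\int_J\lambda_t W_{2,t}^2(\mu^0,\mu^1)\dd t$. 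The left-hand side, after integration by parts in $a$ against a suitable test function, becomes a genuinely integrated quantity involving only $\int_J\cH_t(\mu_t^a)\dd t$ at the endpoints $a\in\{0,1\}$ and $\int_J\int_0^1\cH_t(\mu_t^a)\dd a\,\dd t$ (a midpoint-type convexity), which is what makes it amenable to $\Gamma$-convergence.

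\textbf{Step 2: Lifting to the discrete level and passing to the limit.} Given $\mu^0,\mu^1\in\cP(X)$, use Definition \ref{def:disc-cont-conv}(ii) to choose recovery sequences $\mu^{n,j}\in\cP(\cX^{(n)})$ with $i_n(\mu^{n,j})\to\mu^j$ weakly and with convergence of entropies and transport distances. For each $n$ and a.e.\ $t\in J$ let $(\mu_t^{n,a})_{a\in[0,1]}$ be a $\cW_t^{(n)}$-geodesic from $\mu^{n,0}$ to $\mu^{n,1}$; since $(\cX^{(n)},Q_t^{(n)},\pi_t^{(n)})$ is a discrete super Ricci flow, the integrated dynamic convexity inequality of Step 1 holds at the discrete level. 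By the uniform diameter bound and a compactness argument (tightness of $\cP(X)$ plus the uniform $L$-bound), pass to a subsequence so that $i_n(\mu_t^{n,a})\dd a\,\dd t$ converges weakly on $X\times[0,1]\times J$ to some $\mu_t^a\dd a\,\dd t$; the endpoint marginals in $a$ are $\mu^0$, $\mu^1$ for a.e.\ $t$. Then: (a) by the lower semicontinuity of entropy in Definition \ref{def:disc-cont-conv}(i), $\int_J\int_0^1\cH_t(\mu_t^a)\dd a\,\dd t\leq\liminf_n\int_J\int_0^1\cH_t^{(n)}(\mu_t^{n,a})\dd a\,\dd t$; (b) by the $\Gamma$-liminf inequality for transport costs in (i), together with $W_{2,t}(\mu_t^a,\mu_t^b)^2\leq\liminf_n\cW_t^{(n)}(\mu_t^{n,a},\mu_t^{n,b})^2$ and the fact that the discrete curves are geodesics of length $\cW_t^{(n)}(\mu^{n,0},\mu^{n,1})\to W_{2,t}(\mu^0,\mu^1)$, one identifies $(\mu_t^a)_a$ as a $W_{2,t}$-geodesic for a.e.\ $t$ (a standard lower-semicontinuity-forces-geodesic argument); (c) the endpoint entropy terms converge by the recovery property (ii). Combining, the integrated convexity inequality is inherited in the limit.

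\textbf{Step 3: From integrated to pointwise dynamic convexity.} Finally, since the integrated inequality holds for every subinterval $J\subset I$, a Lebesgue differentiation argument yields the pointwise statement \eqref{eq:SRF-cont} for a.e.\ $t\in I$, with the measurable selection of $W_{2,t}$-geodesics furnished by the construction above; one also checks absolute continuity of $a\mapsto\cH_t(\mu^a)$ along these geodesics, which follows from the finiteness of the entropy endpoints and the standard fact that entropy is convex (hence continuous on the open interval) along $W_{2,t}$-geodesics in any mm-space, here combined with the integrated bound. This gives that $(X,d_t,m_t)_{t\in I}$ is a super Ricci flow in the sense of Definition \ref{def:SRF-cont}.

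\textbf{The main obstacle} I expect is Step 2(b): identifying the limiting curve $(\mu_t^a)_a$ as a genuine $W_{2,t}$-geodesic, and doing so measurably in $t$. The $\Gamma$-liminf assumption gives only $\int_J W_{2,t}(\mu_t^0,\mu_t^1)^2\dd t\leq\liminf_n\int_J\cW_t^{(n)}(\cdots)^2\dd t$ as an \emph{integrated} inequality, so extracting the pointwise geodesic property requires combining it with the recovery upper bound in (ii) and a careful Fatou/reverse-Fatou argument to pin down equality for a.e.\ $t$; the interchange of the weak limit in $(a,t)$ with the geodesic structure is the delicate point, and is precisely where the uniform diameter bound and the moderateness assumption (controlling how $\cW_t$ degenerates in $t$) are used.
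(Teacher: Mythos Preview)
Your three-step architecture (integrate in $t$, pass to the limit via the $\Gamma$-type assumptions, differentiate back) matches the paper. However, Step~1 as written has a genuine gap in the handling of the right-hand side, and the integrated formulation is too vague where the paper is quite specific.

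The crucial point you miss is how the $\partial_t^-\cW_{t-}^2$ term survives the limit. You propose to ``control the right-hand side from below by $\frac12\int_J\lambda_t\cW_t^2$'', but the moderateness bound \eqref{eq:lambda-mod-W} gives $-\frac12\partial_t^-\cW_{t-}^2\leq \frac12\lambda_t\cW_t^2$, an \emph{upper} bound; and in any case, replacing $\partial_t^-\cW_{t-}^2$ by a multiple of $\cW_t^2$ would discard precisely the term that encodes the super Ricci flow property in the limit. The paper instead integrates $\partial_t^-\cW_{t-}^{(n)}(\mu^{n,0},\mu^{n,1})^2$ over $J=(r,s)$ and bounds it by the boundary difference $\cW_s^{(n)}(\mu^{n,0},\mu^{n,1})^2-\cW_r^{(n)}(\mu^{n,0},\mu^{n,1})^2$, which passes to the limit as $W_{2,s}^2-W_{2,r}^2$ via the recovery condition (ii). Moderateness enters only as a $\tau^2$-order error term.

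Relatedly, the paper's integrated form of the LHS is not obtained by ``integration by parts against a test function'' but by applying dynamic convexity to the sub-geodesics $a\mapsto\mu^{\sigma+a(1-2\sigma)}$ for each $\sigma\in(0,\tau)$ and integrating in $\sigma$; this yields, after also integrating in $t$, an inequality involving only the averaged entropies $\cH_J^{(n)}$ at the four geodesic points $\mu^{n,0},\mu^{n,\tau},\mu^{n,1-\tau},\mu^{n,1}$ (equation \eqref{eq:convex2}). The endpoint entropies converge by recovery, the intermediate ones obey the liminf of Definition~\ref{def:disc-cont-conv}(i) with the right sign, and the RHS converges as above. One then lets $r,s\to t$ and finally $\tau\to 0$. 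Your diagnosis of the obstacle in Step~2(b) is accurate, and the paper resolves it essentially as you outline: the integrated liminf for $\cW^{(n)}$ combined with the recovery upper bound forces the pointwise geodesic identity for a.e.\ $t$ and rational $a$, then one extends by completeness.
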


 The proof of stability follows from the fact that under the control
 \eqref{eq:lambda-mod} the dynamic convexity property can be
 reformulated in an integrated way, following the reasoning in
 \cite[Thm.~3.3]{sturm2015} for stability of super Ricci flows of
 mm-spaces, see in particular \cite[Thm.~1.15,
 Prop.~2.21]{sturm2015}. For the reader's convenience we recapitulate
 this in the present setting in the first and the last step of the
 proof.

 \begin{proof}
   {\bf Step 1:} By assumption (c.f.~Theorem \ref{thm:srf-equiv}) we
   have dynamic convexity of the entropy
   $\cH^{(n)}_t=\ent(\cdot|\pi^{(n)}_t)$ on
   $(\cX^{(n)},Q_t^{(n)},\pi_t^{(n)})_t$. I.e.~for a.e.~$t\in I$,
   every $n$, and every $\cW^{(n)}_t$-geodesic $\mu^{n,a}$ such that
   \begin{align}\label{eq:convex1}
      \partial_a^+\cH^{(n)}_t(\mu^{n,1-})-\partial_a^-\cH^{(n)}_t(\mu^{n,0+})\geq-\frac12\partial_t^-\cW^{(n)}_{t-}(\mu^{n,0},\mu^{n,1})^2\;.
   \end{align}
   We will first pass to an integrated version of \eqref{eq:convex1}
   in space and time. More precisely, we claim: for every $n$, every
   $J=(r,s)\subset I$ and every measurable family of
   $\cW^{(n)}_t$-geodesics $(\mu^{n,a}_t)_{a\in[0,1]}$ connecting
   $\mu^{n,0},\mu^{n,1}$ for $t\in J$ and every $\tau\in(0,\frac12)$
   we have that
   \begin{align}\nonumber
    & \cH_J^{(n)}(\mu^{n,1})- \cH_J^{(n)}(\mu^{n,1-\tau}_J)+ \cH_J^{(n)}(\mu^{n,0})- \cH_J^{(n)}(\mu^{n,\tau}_J)\\\label{eq:convex2}
 &\geq -\frac{\tau}{2(s-r)}\Big[\cW^{(n)}_{s}(\mu^{n,0},\mu^{n,1})^2-\cW^{(n)}_{r}(\mu^{n,0},\mu^{n,1})^2\Big] 
-\tau^2\cW^{(n)}_{\lambda J}(\mu^{n,0},\mu^{n,1})^2\;,
   \end{align}
   where we have put
   \begin{align*}
     \cH^{(n)}_J(\mu^{n,a}_J)&:=\frac{1}{s-r}\int_r^s\cH^{(n)}_t(\mu^{n,a}_t)\dd t\;,\\
   \cW^{(n)}_{\lambda J}(\mu^{n,0},\mu^{n,1})^2&:= \frac{1}{s-r}\int_r^s \lambda_t\cW^{(n)}_t(\mu^{n,0},\mu^{n,1})^2\dd t\;.
   \end{align*}

   To show this, first note that for all $t$ we have a curvature bound
   $\Ric(\cX^{(n)},Q_t^{(n)},\pi_t^{(n)})\geq \kappa_t$ for some
   $\kappa_t\in\R$ by (\cite[Theorem 4.1]{Mie11b}) implying that
   $\cH^{(n)}_t$ is semiconvex along $\cW^{(n)}_t$-geodesics. Thus for
   the geodesics
   $a\mapsto\mu^{n,a}_{t,\sigma}:=\mu_t^{n,\sigma+a(1-2\sigma)}$ we
   can write
   \begin{align*}
   \cH_t^{(n)}(\mu^{n,\tau}_t)-\cH_t^{(n)}(\mu^{n,0})= \int_0^\tau\partial_\sigma\cH^{(n)}_t(\mu^{n,\sigma}_t)\dd\sigma = \int_0^\tau\frac{1}{1-2\sigma}\partial_a\cH^{(n)}_t(\mu^{n,a}_{t,\sigma})\big|_{a=0}\dd\sigma\;,
   \end{align*}
   and
   $\cH_t^{(n)}(\mu^{n,1})-\cH_t^{(n)}(\mu^{n,1-\tau}_t)=
   \int_0^\tau\frac{1}{1-2\sigma}\partial_a\cH^{(n)}_t(\mu^{n,a}_{t,\sigma})\big|_{a=1}\dd\sigma$.
   Adding these identities and using \eqref{eq:convex1} for the
   geodesics $\mu^{n,a}_{t,\sigma}$ yields
\begin{align*}
&\cH_t^{(n)}(\mu^{n,1})-\cH_t^{(n)}(\mu^{n,1-\tau}_t)+   \cH_t^{(n)}(\mu^{n,0})-\cH_t^{(n)}(\mu^{n,\tau}_t)\geq 
-\frac12\int_0^\tau\frac{1}{1-2\sigma}\partial_t^-\cW^{(n)}_{t-}(\mu^{n,\sigma}_t,\mu^{n,1-\sigma}_t)^2\dd\sigma\;.
\end{align*}
Noting that for the $\cW^{(n)}_t$-geodesic $\mu^{n,\cdot}_t$ we have
\begin{align*}
  \partial_t^-\cW^{(n)}_t(\mu^{n,0},\mu^{n,1})^2
 &\geq \frac{1}{1-2\sigma}\partial_t^-\cW^{(n)}_t(\mu^{n,\sigma}_t,\mu^{n,1-\sigma}_t)^2+\frac1\sigma\partial_t^-\cW^{(n)}_t(\mu^{n,0},\mu^{n,\sigma}_t)^2\\ &+\frac1\sigma\partial_t^-\cW^{(n)}_t(\mu^{n,1-\sigma}_t,\mu^{n,1})^2\;,
\end{align*}
and the bound \eqref{eq:lambda-mod-W} and integrating in $\sigma$ then yields
\begin{align}\nonumber
  &\cH_t^{(n)}(\mu^{n,1})-\cH_t^{(n)}(\mu^{n,1-\tau}_t)+   \cH_t^{(n)}(\mu^{n,0})-\cH_t^{(n)}(\mu^{n,\tau}_t)\\\label{eq:convex3} &\geq 
-\frac{\tau}2\partial_{t}^-\cW^{(n)}_{t-}(\mu^{n,0},\mu^{n,1})^2
-\tau^2\lambda_t\cW^{(n)}_t(\mu^{n,0},\mu^{n,1})^2\;.
\end{align}
To obtain \eqref{eq:convex2} it suffices to integrate
\eqref{eq:convex3} in $t$ on $(r,s)$ using that
$\cW^{(n)}_s(\mu,\nu)^2-\cW^{(n)}_r(\mu,\nu)^2\geq
\int_r^s\partial_t^-\cW^{(n)}_{t-}(\mu,\nu)^2\dd t$.
This last estimate can be deduced from the fact that that
$\cW^{(n)}_s(\mu,\nu)^2-\cW^{(n)}_r(\mu,\nu)^2\geq \int_r^s\eta_t\dd
t$
for some $\eta\in L^1_{\text{loc}}$, more precisely by the lower
log-Lipschitz bound we can take
$\eta_t=\exp(\int_s^t\lambda_u\dd u)\lambda_uL^2$, where $L$ is a
uniform bound on $\cW^{(n)}$.

\medskip {\bf Step 2:} Now, we pass to the limit in \eqref{eq:convex2}
as $n\to\infty$. We claim that for every $J=(r,s)\subset I$ and every
$\mu^{0},\mu^{1}\in\cP(X)$ there exists a measurable family of
$W_{2,t}$-geodesics $(\mu^{a}_t)_a$ connecting $\mu^{0},\mu^{1}$ for
$t\in J$ such that for every $\tau\in(0,\frac12)$ we have that
   \begin{align}\nonumber
    & \cH_J(\mu^{1})- \cH_J(\mu^{1-\tau}_J)+ \cH_J(\mu^{0})- \cH_J(\mu^{\tau}_J)\\\label{eq:convex4}
 &\geq -\frac{\tau}{2(s-r)}\Big[W_{2,s}(\mu^{0},\mu^{1})^2-W_{2,r}(\mu^{0},\mu^{1})^2\Big] 
-\tau^2 W_{2,\lambda J}(\mu^0,\mu^{1})^2\;,
   \end{align}
   where we have put again
   \begin{align*}
     \cH_J(\mu^{a}_J)&:=\frac{1}{s-r}\int_r^s\cH_t(\mu^{a}_t)\dd t\;,\quad
   W_{2,\lambda J}(\mu^0,\mu^{1})^2:= \frac{1}{s-r}\int_r^s \lambda_tW_{2,t}(\mu^{0},\mu^{1})^2\dd t\;.
   \end{align*}
   Indeed, by Definition \ref{def:disc-cont-conv} we can find
   sequences $\mu^{n,0},\mu^{n,1}\in\cP(\cX^{(n)})$ such that
   $i_n(\mu^{n,j})\to \mu^j$ weakly and
   $\cH^{(n)}_J(\mu^{n,j})\to\cH_J(\mu^j)$ for $j=0,1$ as $n\to\infty$
   as well as
   $\cW^{(n)}_t(\mu^{n,0},\mu^{n,1})\to W_{2,t}(\mu^0,\mu^1)$ for
   a.e.~$t\in J$. By the uniform bound on $\cW^{(n)}_t$ this implies
   also that
   $\cW^{(n)}_{\lambda J}(\mu^{n,0},\mu^{n,1})\to W_{2,\lambda
     J}(\mu^0,\mu^1)$.
   By the previous step there exist a family of
   $\cW^{(n)}_t$-geodesics $(\mu^{n,a}_t)$ for $t\in J$ connecting
   $\mu^{n,0}$ and $\mu^{n,1}$ for which \eqref{eq:convex2} holds. Let
   $\bar \mu^{n,a}_t\in \cP(X)$ be the image of $\mu^{n,a}_t$ under
   $i_n$ and put
   $\bar\mu^{n,a}(\dd x,\dd t)=\bar \mu_t^{n,a}(\dd x)\dd t$. By
   compactness of $X\times \bar J$, we can find measures
   $\mu^a(\dd x,\dd t)$ for $a\in[0,1]\cap\Q$ such that up to
   extracting a subsequence we have that
   $\bar\mu^{n,a}(\dd x,\dd t)\to \mu^a(\dd x,\dd t)$ weakly. It is
   readily checked that the limiting measures take the form
   $\mu^a(\dd x,\dd t)=\mu^a_t(\dd x)\dd t$ for a family of measures
   $\mu_t^a(\dd x)\in\cP(X)$. Again by Definition
   \ref{def:disc-cont-conv} we have for all rational $a,b$:
   \begin{align*}
W_{J}(\mu^a_J,\mu^b_J)^2\leq \liminf_n\cW^{(n)}_J(\mu^{n,a}_J,\mu^{n,b}_J)^2 = \liminf_n(b-a)^2 \cW^{(n)}_J(\mu^{n,0}_J,\mu^{n,1}_J)^2=(b-a)^2W_{J}(\mu^0,\mu^1)^2\;
   \end{align*}
   where we have set
   $W_J(\mu_J^a,\mu_J^b)^2=\frac{1}{|J|}\int_J
   W_{2,t}(\mu_t^a,\mu_t^b)^2\dd t$,
   analogously for $\cW_J$. In particular, we obtain
   \begin{align*}
     \frac1{a}W_{J}(\mu^0,\mu_J^a)^2+\frac1{1-a}W_{J}(\mu_J^a,\mu^1)^2\leq W_{J}^2(\mu^0,\mu^1)^2\;.
   \end{align*}
 Note that this implies that for a.e.
   $t\in J$ and all rational $a\in[0,1]$
   \begin{align}\label{eq: geodesic}
   \frac1{a}W_{2,t}(\mu^0,\mu_t^a)^2+\frac1{1-a}W_{2,t}(\mu_t^a,\mu^1)^2= W_{2,t}^2(\mu^0,\mu^1)^2\;,
   \end{align}
   since the ``$\geq$'' in \eqref{eq: geodesic} holds by the triangle
   inequality. Further we entail from Definition
   \ref{def:disc-cont-conv} that for all rational $\tau\in[0,1]$
   \begin{align*}
     \cH_J(\mu^{\tau}_J) \leq \frac1{s-r}\int_r^s\liminf_n \cH^{(n)}_t(\mu^{n,\tau}_t)\dd t \leq  \liminf_n\cH^{(n)}_J(\mu^{n,\tau}_J)\;,
   \end{align*}
   and similarly
   $\cH_J(\mu^{1-\tau}_J)\leq
   \liminf_n\cH^{(n)}_J(\mu^{n,1-\tau}_J)$.
   Thus, we can pass to the limit (inferior) in \eqref{eq:convex2} to
   obtain \eqref{eq:convex4} at all rational $\tau$.
  
   To conclude, we note that \eqref{eq: geodesic} implies that
   $W_{2,t}(\mu^a_t,\mu^b_t)=|b-a|W_{2,t}(\mu^0,\mu^1)$ for almost
   every $t\in J$ and all rational $a,b$. Thus, by completeness of
   $(\cP(X),W_{2,t})$, we can extend for a.e.~$t$ the family
   $(\mu_t^a)_{a\in[0,1]\cap\Q}$ to a $W_{2,t}$-geodesic
   $(\mu_t^a)_{a\in[0,1]}$. By lower semicontinuity of the relative
   entropy and Fatou's Lemma we extend the estimate \eqref{eq:convex4}
   to all $\tau\in (0,\frac12)$.

\medskip

{\bf Step 3:} Finally, we deduce from \eqref{eq:convex4} that
$(X,d_t,m_t)$ is a super Ricci flow in the sense of Definition
\ref{def:SRF-cont}. To do so, note that we can choose a common family
of geodesics for all rational $r\leq s$. Then we let $r,s\to t$ using
Lebesgue's density theorem to obtain for a.e.~$t$ that
\begin{align}\label{eq:convex5}
   \cH_t(\mu^{1})- \cH_t(\mu^{1-\tau}_t)+ \cH_t(\mu^{0})- \cH_t(\mu^{\tau}_t)\geq -\frac{\tau}{2}\partial_t^-W_{2,t-}(\mu^{0},\mu^{1})^2 
-\tau^2 \lambda_tW_{2,t}(\mu^0,\mu^{1})^2\;.
\end{align}
Then it suffices to divide by $\tau$ and let $\tau\to0$ in
\eqref{eq:convex5} to obtain \eqref{eq:SRF-cont}.
 \end{proof}

\bibliographystyle{plain}
\bibliography{dRF}

 \end{document}